\newcommand{\baselocus}{\operatorname{\mathsf{Bs}}}
\newcommand{\parabaselocus}{\operatorname{\mathsf{PBs}}}
\newcommand{\exceptionalset}{\mathsf{Exc}}
\title{Semiorthogonal indecomposability of minimal irregular surfaces}
\date{\today}
\author{Shinnosuke Okawa}
\address{
Department of Mathematics,
Graduate School of Science,
Osaka University,
Machikaneyama 1--1,
Toyonaka,
Osaka,
560--0043,
Japan.
}
\email{okawa@math.sci.osaka-u.ac.jp}
\begin{document}

\begin{abstract}
    We prove a relative version of the fact that semiorthogonal decompositions of the bounded derived category of coherent sheaves are strongly constrained by the base locus of the canonical linear system. As an application we prove that the derived category of minimal surfaces
    \(
       X
    \)
    with
    \(
        H ^{ 1 } (
            X,
            \cO
            _{
                X
            }
        )
        \neq
        0
    \)
    are semiorthogonally indecomposable.
\end{abstract}

\maketitle

\tableofcontents




%
%
\section{Introduction}

It is impossible to stress enough the significance of the relative point of view for the minimal model theory.
Even if one is only interested in the birational geometry of a variety
\(
   X
\), in many places it is useful to choose an appropriate morphism
\(
   f
   \colon
   X
   \to
   Y
\)
and consider the geometry of
\(
   X
\)
relative to
\(
   f
\).

This paper is devoted to bringing this insight into the study of semiorthogonal decomposition (SOD) of the bounded derived category of coherent sheaves and demonstrating its usefulness by showing the semiorthogonal indecomposability of minimal surfaces with positive irregularity.


A guiding principal for the study of SODs of the bounded derived category
\(
   \derived
   (
    X
   )
   =
   \derived
   ^{
    \bounded
   }
   \coh
   X
\)
of coherent sheaves on a smooth projective variety
\(
   X
\)
is the \emph{DK hypothesis}. It asserts that a step of a minimal model program
\(
   X
   \dasharrow
   Y
\)
(or more generally a \(K\)-inequality
\(
   K
   _{
    X
   }
   >
   K
   _{
    Y
   }
\))
induces an SOD of
\(
   \derived
   (
    X
   )
\)
by
``\(
   \derived
   (
    Y
   )
\)''
and some other components; since
\(
   Y
\)
could be singular even if
\(
   X
\)
is nonsingular, part of the hypothesis is to define the category
``\(
   \derived
   (
    Y
   )
\)''
suitably (see, say,~\cite[]{MR3838122} and references therein). Note that the hypothesis implies that if
\(
   \derived
   (
    X
   )
\)
is indecomposable, then
\(
   X
\)
should be minimal.

On the other hand, the other implication

\noindent
\begin{align}\label{equation:SOD implies MMP}
    \mathrm{SOD}
    \Rightarrow
    \mathrm{MMP}
\end{align}
does not always hold in dimensions at least
\(
   2
\):

\begin{example}\label{example:exceptional object}
    Suppose that
    \(
       H
       ^{
        i
       }
       (
        X,
        \cO
        _{
            X
        }
       )
       =
       0
    \)
    for
    \(
       i
       >
       0
    \).
    Then any invertible sheaf on
    \(
       X
    \)
    is an exceptional object and hence decomposes
    \(
       \derived
       (
        X
       )
    \).
    There are several examples of minimal surfaces of Kodaira dimensions
    \(
       1
    \)
    or
    \(
       2
    \)
    of this sort. They are rich sources of (quasi-)phantom categories (see, say,~\cite{2015arXiv150800682K,MR3728631,MR3090263} and references therein).
\end{example}

Nevertheless, known results seem to suggest the following bold hypothesis.

\begin{hypothesis}\label{hypothesis:bold hypothesis}
    For almost all smooth projective variety
    \(
       X
    \)
    which is minimal, the derived category
    \(
       \derived
       (
        X
       )
    \)
    admits no nontrivial semiorthogonal decomposition.
\end{hypothesis}

Making the phrase `almost all' in~\cref{hypothesis:bold hypothesis} as precise as possible is part of the problem.
Note that it is a special case of the following even bolder hypothesis. One can tell from experience, however, that~\cref{hypothesis:bold hypothesis} is the essential part (see~\cite[Proposition~5.7]{2020arXiv200607643P}).

\begin{hypothesis}\label{hypothesis:bolder hypothesis}
    For almost all smooth projective variety
    \(
       X
    \)
    which is \emph{not necessarily} minimal, any SOD of the derived category
    \(
       \derived
       (
        X
       )
    \)
    comes from MMPs starting with
    \(
       X
    \)
    through the DK hypothesis up to autoequivalences of
    \(
       \derived
       (
        X
       )
    \).
    Namely, for almost all
    \(
       X
    \)
    the implication~\eqref{equation:SOD implies MMP} holds (up to autoequivalences).
\end{hypothesis}

\cref{hypothesis:bolder hypothesis} is completely settled for curves in~\cite[]{MR2838062}. See~\cite[]{2015arXiv150800682K} and the rest of this introduction for what is known in dimensions
\(
   \ge
   2
\).

In this paper we add a new technique for investigating SODs of
\(
   \derived
   (
    X
   )
\),
which is based on the \emph{relative point of view}.
Let
\(
   X
\)
be a nonsingular variety and
\(
   f
   \colon
   X
   \to
   Y
\)
be a projective morphism to another variety (or more generally an algebraic space). As we mentioned in the beginning of this introduction, it is quite important to consider the birational geometry of
\(
   X
\)
over
\(
   Y
\).

We claim that the notion of \emph{\(f\)-linear semiorthogonal decompositions} (\cref{definition:semiorthogonal decomposition}) is the categorical counterpart of birational geometry over
\(
   Y
\).
The correspondence is summarized in~\cref{figure:dictionary} below.

\begin{figure}[H]
    \begin{tabular}{c c c}
        \toprule
        &
        geometry
        &
        category
        \\
        \cmidrule(r){2-2}
        \cmidrule(r){3-3}
        absolute
        &
        MMP of
        \(
            X
        \)
        &
        SOD of
        \(
            \derived
            (
            X
            )
        \)
        \\
        relative
        &
        MMP of
        \(
            X
        \)
        over
        \(
            Y
        \)
        &
        \(
            f
        \)-linear SOD of
        \(
            \derived
            (
            X
            )
        \)
        \\
        \bottomrule
    \end{tabular}
    \caption{Dictionary}\label{figure:dictionary}
\end{figure}

This, for example, is supported by the following result due to Pirozhkov, which plays an important role in this paper.

\begin{theorem}[{\(=\)\cite[Theorem~4.5]{2020arXiv201112743P}, simplified for brevity}]\label{theorem:Pirozhkov}
    Let
    \(
       f
       \colon
       X
       \to
       Y
    \)
    be a morphism of projective varieties such that
    \(
       X
    \)
    is nonsingular. Suppose that
    \(
       Y
    \)
    admits a finite morphism to an abelian variety. Then any SOD of
    \(
       \derived
       (
        X
       )
    \)
    is
    \(
       f
    \)-linear.
\end{theorem}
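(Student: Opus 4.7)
Set $\phi := g\circ f\colon X\to A$ and $\hat A := \operatorname{Pic}^0(A)$. The plan is to establish $f$-linearity of the given SOD in two steps: first prove stability under the autoequivalences $T_L := (-)\otimes \phi^*L$ for $L\in\hat A$, by exhibiting the set of such $L$ as an open subgroup of $\hat A$; then extend to the full $f^*\operatorname{Pic}(Y)$-action using the base-locus machinery of the preceding sections applied to $f$-ample classes pulled back from $A$.

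Write the SOD as $\derived(X) = \langle \mathcal{A}_1, \dotsc, \mathcal{A}_n \rangle$ and pick classical generators $E_i\in\mathcal{A}_i$, which exist by admissibility. Define
\[
G := \{\, L \in \hat A \mid T_L(\mathcal{A}_i) = \mathcal{A}_i \text{ for every } i\,\}\subseteq \hat A.
\]
Since $T_L\circ T_M = T_{L\otimes M}$ and $T_{\cO_A} = \mathrm{id}$, $G$ is a subgroup of $\hat A$. I claim that $G$ is open. Once the inclusion $T_L(\mathcal{A}_i)\subseteq\mathcal{A}_i$ holds for every $i$, uniqueness of SOD decomposition forces equality: any $E\in\mathcal{A}_i$, decomposed in the SOD $\langle T_L\mathcal{A}_j\rangle$ and then viewed in $\langle \mathcal{A}_j\rangle$, must by uniqueness have its $i$-th piece equal to $E$ itself. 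So $L\in G$ is equivalent to the finitely many vanishings
\[
\operatorname{Hom}_X^k(E_j,\, E_i\otimes \phi^*L) = 0
\]
for the semiorthogonality-forbidden pairs $(i,j)$ and the (finitely many) degrees $k$ in which $\operatorname{Hom}^k$ can a priori be nonzero. Upper semicontinuity of cohomology in the flat family on $X\times\hat A$ built from $p_X^*E_i$ and $(\phi\times\mathrm{id})^*\mathcal{P}$ (with $\mathcal{P}$ the Poincar\'e bundle on $A\times\hat A$) makes each such vanishing an open condition on $L$, so $G$, being a finite intersection of opens, is open. An open subgroup of the connected abelian variety $\hat A$ must be all of $\hat A$, giving $G=\hat A$.

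The $\hat A$-stability so obtained is the heart of the matter; to finish, I would combine it with stability under twisting by $f$-ample classes pulled back through $g$ (for which, since $g$ is finite, $f^*g^*H$ becomes $f$-base-point-free for high powers of an ample $H\in\operatorname{Pic}(A)$, so the paper's base-locus machinery applies) and use the finiteness of $g$ to bridge from $\operatorname{Pic}(A)$ to $\operatorname{Pic}(Y)$. The main obstacle I foresee is precisely this final extension: even when $g$ is finite, $g^*\colon \operatorname{Pic}(A)\to\operatorname{Pic}(Y)$ can fall well short of surjective, so one must genuinely combine the open-subgroup argument with the paper's earlier technology to cover enough of $f^*\operatorname{Pic}(Y)$ for the intended definition of $f$-linearity.
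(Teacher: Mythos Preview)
The paper does not prove this theorem; it is quoted from Pirozhkov's paper and used as a black box (see also \cref{remark:SSI vs NSSI}). So there is no ``paper's own proof'' to compare against. That said, let me comment on your argument.

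Your open-subgroup argument for stability under $\operatorname{Pic}^0(A)$ is correct and is indeed the central mechanism in Pirozhkov's proof. The reduction to finitely many cohomological vanishings via classical generators, and the semicontinuity over $\hat A$, are exactly right.

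Step~5, however, has a genuine gap, and the gap is more serious than you indicate. First, your appeal to ``the paper's base-locus machinery'' is circular: \cref{theorem:dichotomy} and its corollaries apply only to semiorthogonal decompositions already known to be $f$-linear, which is precisely what you are trying to establish. Second, even ignoring that, there is no mechanism in your sketch by which relative base-point-freeness of $f^*g^*H$ would force $(-)\otimes f^*g^*H$ to preserve the components; base-point-freeness gives sections, not autoequivalences respecting the SOD. Third, as you yourself note, $g^*\operatorname{Pic}(A)$ can be far from $\operatorname{Pic}(Y)$, and nothing in your outline produces the missing classes.

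Pirozhkov's actual route is different: he works in the stronger NSSI framework and, in addition to $\operatorname{Pic}^0(A)$-twists, runs the same open-subgroup argument for pullback along translations $t_a\colon A\to A$. The interaction between translation-invariance and $\operatorname{Pic}^0$-invariance, together with the specific structure of line bundles on abelian varieties, is what yields full $\perf(A)$-linearity; the passage to $Y$ then uses that NSSI is inherited along finite morphisms, rather than any attempt to lift line bundles through $g^*$. Your sketch recovers the first and easiest of these ingredients but not the others.
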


\begin{remark}
    \cref{theorem:Pirozhkov} is comparable to the fact that, in the same situation, any MMP starting with
    \(
        X
    \)
    is over
    \(
        Y
    \)
    since there is no rational curve on an abelian variety.
\end{remark}

In~\cite[Theorem~1.2]{2015arXiv150800682K} Kawatani and the author proved that semiorthogonal decompositions of a smooth projective variety
\(
   X
\)
is strongly constrained by the base locus of the canonical linear system. A main result of this paper is that the straightforward generalization to the relative settings holds as follows.

\begin{theorem}[{\(=\)\cref{theorem:dichotomy}}]\label{theorem:dichotomy-Introduction}
    Let
    \(
        f \colon X \to Y
    \)
    be a proper morphism of finite dimensional noetherian algebraic spaces such that \( X \) is a regular connected scheme. Let
    
    \noindent
    \begin{align}
        \derived ( X ) = \langle \cA, \cB \rangle
    \end{align}
    be an \( f \)-linear semiorthogonal decomposition. Then one of the followings holds, where
    \(
        \baselocus _{ f } ( \omega _{ X } )       
    \)
    is the relative canonical base locus of
    \(
       X
    \)
    (see~\cref{definition:relative base locus}).
    
    \noindent
    \begin{enumerate}[(A)]
        \item\label{item:points belong to A_Introduction}
        For any closed point \( x \nin \baselocus _{ f } ( \omega _{ X } ) \) the sheaf \( \bfk ( x )\) is contained in \( \cA \).

        \item\label{item:points belong to B_Introduction}
        For any closed point \( x \nin \baselocus _{ f } ( \omega _{ X } ) \) the sheaf \( \bfk ( x )\) is contained in \( \cB \).
    \end{enumerate}
    Moreover, when~\cref{item:points belong to A_Introduction} (resp.~\cref{item:points belong to B_Introduction}) holds, then the support of any object of \( \cB \) (resp. \( \cA \)) is contained in the closed subset
    \(
        \baselocus _{ f } ( \omega _{ X } )
    \).
\end{theorem}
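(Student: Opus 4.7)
The plan is to import the projective argument of~\cite[Theorem~1.2]{2015arXiv150800682K} after using $f$-linearity to localize on the base $Y$. For each closed point $x \in X$ the SOD produces a functorial triangle $B_x \to \bfk(x) \to A_x \to B_x[1]$ with $B_x \in \cB$ and $A_x \in \cA$, and standard semi-continuity for the projection functors of admissible subcategories makes $U_\cA := \{x \in X : B_x = 0\}$ and $U_\cB := \{x \in X : A_x = 0\}$ disjoint open subsets of $X$. The whole theorem then reduces to establishing the containment
\[
    X \setminus \baselocus_f(\omega_X) \subseteq U_\cA \cup U_\cB.
\]
From this the dichotomy follows by connectedness of $X$, and the ``moreover'' support statement is immediate since in case~\cref{item:points belong to A_Introduction} any $\beta \in \cB$ satisfies $\operatorname{Hom}^{\ast}(\beta, \bfk(x)) = 0$ for every $x \in U_\cA$ by semiorthogonality.

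To produce this inclusion, suppose $x \notin \baselocus_f(\omega_X)$. The definition of the relative base locus supplies an open $V \subseteq Y$ containing $f(x)$ together with a section $s \in H^0(f^{-1}(V), \omega_X)$ with $s(x) \neq 0$. Because the SOD is $f$-linear, it restricts to an $f|_{f^{-1}(V)}$-linear SOD of $\derived(f^{-1}(V))$, and the residue objects $A_x, B_x$ coincide with those of the restriction. Replacing $Y$ by $V$, we may assume $s$ is a global section of $\omega_X$ on $X$ non-vanishing at $x$, and we are reduced to showing $A_x = 0$ or $B_x = 0$ in this situation.

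With such $s$ in hand I would invoke the Serre-duality argument of~\cite{2015arXiv150800682K}. Its essence is that the semiorthogonality vanishing $\operatorname{Hom}^{\ast}(B, A) = 0$ for $A \in \cA$, $B \in \cB$ is equivalent, via relative Grothendieck--Serre duality for the proper morphism $f$, to a corresponding vanishing involving $B \otimes \omega_{X/Y}$. The nonvanishing of $s$ at $x$ trivializes $\omega_X$ locally, identifying $\bfk(x) \otimes \omega_X$ with $\bfk(x)$; feeding this into the SOD triangle for $\bfk(x)$ together with the duality-derived vanishing then forces either $A_x = 0$ or $B_x = 0$.

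The main obstacle is this last, pointwise duality step. The statement is phrased with the \emph{absolute} canonical $\omega_X$, while relative Grothendieck duality naturally produces the \emph{relative} $\omega_{X/Y}$. Reconciling the two---in the algebraic-space generality, where no global dualizing sheaf on $Y$ is available---requires careful use of $f$-linearity to absorb the $f$-pullback discrepancy between $\omega_X$ and $\omega_{X/Y}$. Once this compatibility is in place, the pointwise Kawatani--Okawa argument transfers essentially verbatim.
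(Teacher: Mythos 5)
Your high-level plan is the right one and matches the paper's: localize on the base to produce a section $s$ of the canonical bundle nonvanishing at $x$, run a Serre/Grothendieck duality argument to kill the connecting map of the triangle $b \to \bfk(x) \to a$ on the locus where $s$ is nonzero, and then conclude by connectedness. But there are several real gaps in the execution, and they are precisely the places where the paper has to do nontrivial work.

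First, the localization step is not correct as stated for algebraic spaces. You propose to find a Zariski open $V \subseteq Y$ over which there is a section $s$ nonvanishing at $x$ and then to ``replace $Y$ by $V$.'' But~\cref{remark:relative base locus when target affine} only identifies $\baselocus_f(L)$ with a usual base locus when $Y$ is \emph{affine}, and a Zariski open of an algebraic space need not be affine (and $f_*\omega_X$ need not be generated by sections over an arbitrary open containing $f(x)$). The paper instead passes to an \'etale chart $y \colon Y' \to Y$ from an affine scheme with $|y^{-1}(f(x))| = 1$, uses flat base change (\cref{lemma:flat base change}) and the base-change theorem for $f^!$ to transport both the base locus and the dualizing sheaf, and works on $X' = X \times_Y Y'$. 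Moreover, you propose to restrict the entire $f$-linear SOD to $\derived(f^{-1}(V))$; this ``base change of a semiorthogonal decomposition'' is a nontrivial result. The paper avoids needing it: it applies $\ytilde^*$ only to the single defining triangle of $\bfk(x)$ and checks directly that $f'_*\cRHom_{X'}(\ytilde^*b, \ytilde^*a) \simeq y^* f_*\cRHom_X(b,a) = 0$ by flat base change.

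Second, the $\omega_X$ versus $\omega_{X/Y}$ ``obstacle'' that you flag is not optional bookkeeping; it is the crux, and your suggestion to ``absorb the discrepancy via $f$-linearity'' does not by itself give a proof. The paper resolves it by \emph{normalizing} the dualizing complex: in~\cref{definition:dualizing complex} it chooses a dualizing complex $\omega_Y^\bullet$ on $Y$ so that $\omega_X^\bullet := f^!\omega_Y^\bullet$ is an (unshifted) invertible sheaf, and then $\omega_X := \cH^0(\omega_X^\bullet) = \omega_X^\bullet$. With this choice the duality isomorphism of~\cref{corollary:Grothendieck duality} reads directly in terms of $\omega_X$, and the $\omega_X$-factors cancel in the computation of~\eqref{equation:composition with s}, yielding exactly $\bigl(f'_*\cRHom_{X'}(b',a')\bigr)^\vee[1] = 0$. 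The ambiguity in this choice (tensoring by $f^*$ of a line bundle on $Y$) is harmless for the \emph{base locus} by~\cref{lemma:invariance}, which is the lemma that makes the statement well posed; this is different from (and more specific than) an appeal to $f$-linearity.

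Third, you assert that $U_\cA = \{ x : B_x = 0 \}$ and $U_\cB = \{ x : A_x = 0 \}$ are disjoint open subsets by ``standard semi-continuity for projection functors.'' This is not a statement you can cite off the shelf in this generality, and the paper does not rely on it. Instead, its connectedness argument is supportive-theoretic: if $\bfk(x_1) \in \cA$ and $\bfk(x_2) \in \cB$ for distinct closed points, then (by the derived Nakayama argument via $j_*j^*$) every object of $\cB$ has support missing $x_1$ and every object of $\cA$ has support missing $x_2$; applying this to the triangle $b \to \cO_X \to a$ for the structure sheaf yields $X = \Supp(\cO_X) \subseteq \Supp(a) \cup \Supp(b) \subsetneq X$, a contradiction (using that a regular connected scheme is irreducible). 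Your openness route might be made to work, but it requires a kernel-representability argument and is not needed.

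Finally, for the ``moreover'' part you write that $\operatorname{Hom}^\ast(\beta, \bfk(x)) = 0$ is ``immediate''; what is actually needed, and what the paper supplies, is the derived Nakayama step: if $x \in \Supp(\beta)$ then $j^*\beta \neq 0$, so $\beta \to j_*j^*\beta$ is nonzero, and $j_*j^*\beta$ (a sum of shifts of $\bfk(x)$) lies in $\cA$ under hypothesis~\cref{item:points belong to A_Introduction}, contradicting ($f$-)semiorthogonality. For the symmetric case~\cref{item:points belong to B_Introduction} one needs to apply~\cref{corollary:Grothendieck duality} once more.
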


The case where
\(
   Y
   =
   \Spec \bfk
\)
of~\cref{theorem:dichotomy-Introduction} is exactly the same as~\cite[Theorem~1.2]{2015arXiv150800682K},
where the relative canonical base locus is nothing but the usual canonical base locus.

Note that~\cref{theorem:dichotomy-Introduction} is only about those semiorthogonal decompositions of
\(
   X
\)
which are
\(
   f
\)-linear. However, if we are in the situation of~\cref{theorem:Pirozhkov}, then any semiorthogonal decomposition of
\(
   X
\)
is automatically
\(
   f
\)-linear. Since the relative canonical base locus is smaller than the usual canonical base locus by~\cref{lemma:transitivity}, and in some cases they do differ, we obtain stronger constrains from~\cref{theorem:dichotomy-Introduction} than~\cite[Theorem~1.2]{2015arXiv150800682K}. See~\cref{example:PBs as relative canonical base locus} for the example which motivated this work.

When the variety of interest is of positive irregularity,
then we obtain for free a nontrivial morphism to an abelian variety; i.e., the Albanese morphism. With this in mind, as an application of~\cref{theorem:dichotomy-Introduction}, we prove the following

\begin{theorem}[{\(=\)\cref{theorem:irregular surface is SI}}]\label{theorem:main}
    Let
    \(
       X
    \)
    be a minimal surface with
    \(
        H
        ^{
            1
        }
        \left(
            X,
            \cO
            _{
                X
            }
        \right)
        \neq
        0
    \).
    Then
    \(
        \derived
        (
            X
        )
    \)
    admits no non-trivial semiorthogonal decomposition.
\end{theorem}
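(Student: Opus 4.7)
The plan is to combine \cref{theorem:Pirozhkov} with \cref{theorem:dichotomy-Introduction} via the Albanese morphism. Since $H^{1}(X, \cO_{X}) \neq 0$, the Albanese variety $A := \operatorname{Alb}(X)$ is non-trivial, and the Albanese morphism $f := \operatorname{alb}_{X} \colon X \to A$ is a non-trivial projective morphism to an abelian variety. By \cref{theorem:Pirozhkov}, every semiorthogonal decomposition $\derived(X) = \langle \cA, \cB \rangle$ is automatically $f$-linear, and \cref{theorem:dichotomy-Introduction} then implies that, after possibly swapping $\cA$ and $\cB$, every object of $\cB$ is supported on the relative canonical base locus $\baselocus_{f}(\omega_{X})$, while every skyscraper sheaf $\bfk(x)$ at a closed point $x \notin \baselocus_{f}(\omega_{X})$ lies in $\cA$.

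Consequently the whole theorem reduces to the purely geometric claim that $\baselocus_{f}(\omega_{X}) = \emptyset$ for every minimal surface $X$ with $q(X) > 0$: an object with empty support is zero, so $\cB = 0$ and the SOD is trivial. I would establish this vanishing via the Enriques--Kodaira classification, under the convention that minimality forces $\kappa(X) \geq 0$ (so uniruled surfaces, consistent with the fact that e.g.\ $\mathbb{P}^{1} \times E$ does decompose via Beilinson, are excluded). The relevant cases are then: (i) abelian surfaces, where $f$ is an isomorphism and the claim is immediate; (ii) bielliptic surfaces, where $f$ is an elliptic fibre bundle over an elliptic curve; (iii) properly elliptic surfaces with $q > 0$; and (iv) minimal surfaces of general type with $q > 0$. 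In each case the task is to exhibit, at every closed point $x \in X$, some line bundle $L$ on $A$ such that $\omega_{X} \otimes f^{*} L$ has a global section not vanishing at $x$.

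The main obstacle will be cases (ii) and (iii), where $\omega_{X}$ may have few or no global sections even after pulling back ample bundles from $A$. Here I expect to invoke Kollár's and Hacon's generic-vanishing results -- the pushforward $f_{*} \omega_{X}$ is a GV-sheaf on the abelian variety $A$, and its twist by a sufficiently ample line bundle becomes globally generated -- combined with a direct analysis of the fibres of $f$ supplied by the classification, in order to conclude that the evaluation map $f^{*} f_{*} ( \omega_{X} \otimes f^{*} L ) \to \omega_{X} \otimes f^{*} L$ is surjective at every point of $X$ for a suitable $L$. This eliminates every base point of the relative canonical system and completes the proof.
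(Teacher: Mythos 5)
Your overall plan—run \cref{theorem:Pirozhkov} through the Albanese morphism and then apply \cref{theorem:dichotomy-Introduction}—is correct and is exactly how the paper begins. But the reduction you propose, namely that everything comes down to showing
\(
    \baselocus_{f}(\omega_{X}) = \emptyset
\)
for every minimal surface with
\(
    \irregularity (X) > 0
\),
is where the argument breaks. That vanishing is \emph{not} true in general: Konno's structure theorems (\cref{theorem:birational Konno} and \cref{theorem:original fibered Konno}, which the paper relies on) do \emph{not} assert emptiness but only that the relative canonical base locus is finite \emph{outside} a certain exceptional locus, namely the union of exceptional curves contracted by a further birational morphism
\(
    g \colon X \to X'
\)
to a surface with rational singularities. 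In other words, $\baselocus_{f}(\omega_{X})$ is in general allowed to contain 1-dimensional components (exceptional configurations over rational or weak elliptic singularities sitting in degenerate fibers), and your generic-vanishing strategy of globally generating
\(
    f_{*}(\omega_{X} \otimes f^{*}L)
\)
cannot erase those—global generation of the pushforward does not imply surjectivity of the evaluation map $f^{*}f_{*}(\omega_{X}\otimes f^{*}L) \to \omega_{X}\otimes f^{*}L$ along fibers where $\omega_{X}$ itself has base points. The elliptic case (iii) is an independent obstruction: on a properly elliptic surface the canonical bundle formula concentrates $\omega_{X}$ along multiple fibers, and the relative base locus typically contains full fiber components, so again it is not empty.

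The paper therefore does something genuinely different after the dichotomy step. It takes the Stein factorization of $\operatorname{alb}_{X}$ rather than $\operatorname{alb}_{X}$ itself (so the intermediate space is SSI by \cref{theorem:Pirozhkov} and the morphism has connected fibers), and then it runs a \emph{two-stage} argument. In the case $\dim\operatorname{alb}_{X}(X) = 2$: Konno's \cref{theorem:birational Konno} supplies a factorization $X \xrightarrow{g} X' \to Y$ with $X'$ having only rational singularities and $\baselocus_{f}(\omega_{X})$ finite away from $\operatorname{\mathsf{Exc}}(g)$; then \cref{lemma:any SOD is f-linear} plus \cref{theorem:dichotomy} upgrade the $f$-linear SOD to a $g$-linear one; and finally \cref{theorem:base point free over rational singularities} says $\baselocus_{g}(\omega_{X}) = \emptyset$, at which point a second application of \cref{theorem:dichotomy} kills the SOD. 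The case $\dim\operatorname{alb}_{X}(X) = 1$ splits further: when the general fiber has genus $\ge 2$ one uses \cref{corollary:fibered Konno} to again contract the 1-dimensional base components and then falls back to the previous case; when the general fiber has genus $1$ one invokes the canonical bundle formula for elliptic fibrations and an argument taken verbatim from~\cite[Theorem~4.2]{2015arXiv150800682K}. None of these steps is a vanishing of the relative base locus; the point is precisely that one can \emph{contract} the bad 1-dimensional pieces to rational singular points and then iterate. Your proposal is missing this mechanism (essentially \cref{lemma:any SOD is f-linear} combined with Konno's contraction), and without it the geometric reduction you state cannot be carried out.
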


The proof goes as follows (see~\cref{section:Semiorthogonal indecomposability of irregular minimal surfaces} for details).
The assumptions of~\cref{theorem:main} imply that
\(
   X
\)
admits a morphism to a variety of positive dimension such that any semiorthogonal decomposition of
\(
   \derived
   (
    X
   )
\)
is linear with respect to the morphism. Hence, as we explained just before~\cref{theorem:main}, it amounts to understanding the relative canonical base loci of such morphisms.
Thankfully, for surfaces Kazuhiro Konno established structure theorems for the relative canonical base loci which are strong enough for our purpose. Thus we can conclude the proof.

The strategy itself, however, applies to varieties of higher dimensions of positive irregularity as well. Though we leave it for future, it would be interesting to use the same strategy to obtain semiorthogonal indecomposability in dimensions
\(
   \ge
   3
\).

In view of~\cref{theorem:main} and the other results for surfaces established in~\cite[]{2015arXiv150800682K}, it seems fair to make the following conjecture. This is~\cref{hypothesis:bold hypothesis} made precise in dimension
\(
   2
\).

\begin{conjecture}\label{conjecture:main conjecture}
    A minimal surface
    \(
       X
    \)
    is not semiorthogonally indecomposable if and only if
    \(
       \cO
       _{
        X
       }
    \)
    is acyclic; namely,
    \(
       H
       ^{
        i
       }
       \left(
        X,
        \cO
        _{
            X
        }
       \right)
       =
       0
    \)
    for
    \(
       i
       =
       1, 2
    \).
\end{conjecture}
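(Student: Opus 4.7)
My plan is to follow the two-step strategy outlined by the author: first use the positive irregularity to produce a morphism that forces every semiorthogonal decomposition to be linear over it, and then pin down the relative canonical base locus by means of Konno's structure theorems.

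\emph{Step 1: The Albanese morphism.} Since \( h ^{ 1 } ( X, \cO _{ X } ) > 0 \), the Albanese variety \( \operatorname{Alb} ( X ) \) is a nonzero abelian variety and the Albanese morphism is nontrivial. Taking its Stein factorization I obtain a proper surjective morphism \( f \colon X \to Y \) with connected fibers, together with a finite morphism \( Y \to \operatorname{Alb} ( X ) \); here \( Y \) is a normal projective variety of dimension \( 1 \) or \( 2 \). Applying \cref{theorem:Pirozhkov} to \( f \), every SOD of \( \derived ( X ) \) is automatically \( f \)-linear.

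\emph{Step 2: Dichotomy and reduction to base loci.} Given any SOD \( \derived ( X ) = \langle \cA, \cB \rangle \), \cref{theorem:dichotomy-Introduction} gives, after possibly swapping the two factors, a component (say \( \cB \)) that contains the skyscraper \( \bfk ( x ) \) for every closed point \( x \nin \baselocus _{ f } ( \omega _{ X } ) \), while every object of \( \cA \) is supported on the closed subset \( \baselocus _{ f } ( \omega _{ X } ) \subseteq X \). It therefore suffices to prove that no nonzero admissible subcategory of \( \derived ( X ) \) can be supported on \( \baselocus _{ f } ( \omega _{ X } ) \); in particular, if this relative base locus is empty then \( \cA = 0 \) automatically and the SOD is trivial.

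\emph{Step 3: Konno's structure theorems.} Now I split into cases according to \( \dim Y \). When \( \dim Y = 1 \), the morphism \( f \) is a fibration over a smooth curve of positive genus, and Konno's structure theorems describe \( \baselocus _{ f } ( \omega _{ X } ) \) explicitly in terms of the geometry of the reducible and multiple fibers, placing it on a very small union of curves. When \( \dim Y = 2 \), the morphism \( f \) is birational onto the Albanese image and \( X \) is of maximal Albanese dimension; here minimality of \( X \) (so that \( \omega _{ X } \) is nef) combined with the positivity properties of \( f _{ * } \omega _{ X } \) should make \( \baselocus _{ f } ( \omega _{ X } ) \) small, possibly empty. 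In either case I combine the geometric description of the base locus with minimality of \( X \) — so that no \( ( -1 ) \)-curves are available — to rule out any admissible subcategory supported on \( \baselocus _{ f } ( \omega _{ X } ) \).

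I expect the main obstacle to be precisely this last piece of Step 3: matching Konno's explicit classification of the relative canonical base loci against the possible supports of admissible subcategories of \( \derived ( X ) \). Once Konno localizes the base locus to a specific union of curves and points, one still has to exclude the existence of a nontrivial admissible subcategory concentrated there, and this requires combining the geometric restrictions (positive genus of \( Y \), structure of singular fibers, minimality and irregularity of \( X \)) with categorical rigidity of supports coming from \cref{theorem:dichotomy-Introduction}. The case-by-case analysis, treating each shape of base locus appearing in Konno's list, is where I expect the technical core of the proof to live.
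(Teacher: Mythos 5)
The statement you set out to prove is a \emph{conjecture} in the paper, and the paper itself does not prove it in full. What the paper proves is \cref{theorem:irregular surface is SI}: a minimal surface with \( H^1(X, \cO_X) \neq 0 \) is semiorthogonally indecomposable. That is exactly the range your proposal covers --- your Step~1 opens with ``Since \( h^1(X, \cO_X) > 0 \),'' which silently restricts to the irregular case. For the full conjecture you would also need (i) the easy ``if'' direction, that acyclicity of \( \cO_X \) forces a nontrivial SOD --- this is immediate, since every line bundle is then an exceptional object (\cref{example:exceptional object}) --- and, far more seriously, (ii) the case \( H^1(X,\cO_X) = 0 \) and \( H^2(X,\cO_X) \neq 0 \). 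Here the Kodaira-dimension-\(0\) and \(1\) cases are already settled in earlier work, but for minimal surfaces of general type with \(q=0\) and \(p_g>0\) there is no nonconstant map to an abelian variety, so the entire Albanese/SSI machinery of your Steps~1--3 is unavailable. The paper explicitly records in \cref{corollary:remaining cases} that this general-type regularity-zero case is still open whenever \( \baselocus(\omega_X) \) has a \(1\)-dimensional component that cannot be contracted in the category of algebraic spaces. Your proposal does not acknowledge or address this gap, so as written it does not prove the conjecture.

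Within the irregular case your strategy matches the paper's, but Step~3 glosses over the actual technical core. Konno's theorems (\cref{theorem:birational Konno}, \cref{corollary:fibered Konno}) do not make \( \baselocus_f(\omega_X) \) small outright; they only bound it \emph{away from the exceptional locus} of a further birational contraction \( g \colon X \to X' \). The paper therefore bootstraps in two stages: one first uses \cref{lemma:any SOD is f-linear} together with \cref{theorem:dichotomy} to upgrade the given \(f\)-linear SOD to a \(g\)-linear one (possible because the residual base locus is zero-dimensional away from \( \exceptionalset(g) \)), and only then invokes \cref{theorem:base point free over rational singularities} to get \( \baselocus_g(\omega_X) = \emptyset \). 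The genus-\(1\)-fiber subcase needs a separate argument via the canonical bundle formula for elliptic fibrations. Your phrase ``rule out any admissible subcategory supported on \( \baselocus_f(\omega_X) \)'' names the difficulty but does not resolve it; the resolution is precisely this passage from \(f\)-linearity to \(g\)-linearity.
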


In view of~\cref{example:exceptional object}, it seems natural to generalize~\cref{conjecture:main conjecture} to the following hypothesis, which partly refines~\cref{hypothesis:bolder hypothesis} in dimension
\(
   2
\).

\begin{hypothesis}\label{hypothesis:bolder hypothesis in dimension 2}
    Let
    \(
       X
    \)
    be a surface of non-negative Kodaira dimension. Then any semiorthogonal decomposition of
    \(
       \derived
       (
        X
       )
    \), if any, is induced by exceptional collections.
\end{hypothesis}

\cref{corollary:remaining cases_Introduction} below summarizes the remaining cases of~\cref{conjecture:main conjecture}.

\begin{corollary}[{\(=\)\cref{corollary:remaining cases}}]\label{corollary:remaining cases_Introduction}
    \cref{conjecture:main conjecture} is true except possibly for those
    \(
       X
    \)
    which satisfy all of the following conditions.

    \noindent
    \begin{itemize}
        \item \( X \) is a minimal surface of general type.
        \item \( H ^{ 2 } ( X, \cO _{ X } ) \neq 0 \) (\( \iff H ^{ 0 } ( X, \omega _{ X } ) \neq 0 \)).
        \item \( H ^{ 1 } ( X, \cO _{ X } ) = 0 \).
        \item There exists a 1-dimensional component of
        \(
           \baselocus
           \left(
            \omega _{ X }
           \right)
        \)
        which can not be contracted to a point in the category of algebraic spaces.
    \end{itemize}
\end{corollary}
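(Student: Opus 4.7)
The plan is to eliminate case by case along the Enriques--Kodaira classification, matching each non-excluded class to a direct application of one of the two earlier results. The direction ``$\cO_X$ acyclic $\Rightarrow$ $\derived(X)$ decomposable'' of \cref{conjecture:main conjecture} is already covered by \cref{example:exceptional object}, so the task is to show that, assuming $\cO_X$ is not acyclic, $\derived(X)$ is indecomposable unless $X$ lies in the four-bullet exception.

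First I would dispatch the irregular case $H^1(X, \cO_X) \neq 0$ by invoking \cref{theorem:main}. This reduces the problem to minimal surfaces with $q(X) = 0$ and $p_g(X) > 0$ (otherwise $\cO_X$ is acyclic), in which case $|K_X|$ is nonempty and $\baselocus(\omega_X)$ is a well-defined proper closed subset of $X$. Applying the absolute case $Y = \Spec \bfk$ of \cref{theorem:dichotomy-Introduction}, which is \cite[Theorem~1.2]{2015arXiv150800682K}, any SOD $\derived(X) = \langle \cA, \cB \rangle$ can be arranged, after relabelling, so that the support of $\cB$ is contained in $\baselocus(\omega_X)$ while $\cA$ contains every skyscraper at a point outside $\baselocus(\omega_X)$; the remaining task is to force $\cB = 0$ outside the listed exception.

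Now stratify by $\kappa(X)$. The case $\kappa(X) = -\infty$ is vacuous since $p_g > 0$; for $\kappa(X) = 0$, the Enriques--Kodaira classification with $q = 0$ and $p_g > 0$ forces $X$ to be a K3 surface, where $K_X \sim 0$ gives $\baselocus(\omega_X) = \emptyset$ and hence $\cB = 0$. For $\kappa(X) = 1$, namely a minimal properly elliptic surface, Konno's structure theorems for canonical base loci (the same toolkit feeding the proof of \cref{theorem:main}) imply that every 1-dimensional component of $\baselocus(\omega_X)$ is contractible to a point in the category of algebraic spaces; performing this Artin contraction $g \colon X \to X'$ and invoking the relative form of \cref{theorem:dichotomy-Introduction} for $g$ collapses the support of $\cB$ to a 0-dimensional locus, after which a standard semiorthogonality argument forces $\cB = 0$. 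For $\kappa(X) = 2$, the same Artin-contraction scheme works whenever $\baselocus(\omega_X)$ is 0-dimensional or each of its 1-dimensional components is contractible in algebraic spaces; what is left is exactly the four-bullet configuration of the corollary.

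The main obstacle, as I see it, is the passage from ``$\mathrm{supp}(\cB) \subseteq Z$ with $Z$ a 1-dimensional subset contractible by an Artin morphism $g \colon X \to X'$'' to ``$\cB$ is $g$-linear, so the relative \cref{theorem:dichotomy-Introduction} applies and pushes $\mathrm{supp}(\cB)$ onto a 0-dimensional subset of $X'$''. This is the step where the absolute-to-relative upgrade recorded in \cref{figure:dictionary} does genuine work, and it is why both Konno's fine description of canonical base loci of surfaces and the $f$-linearity dichotomy of the paper are indispensable for closing out the non-exceptional cases.
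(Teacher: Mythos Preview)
Your overall case split matches the paper, and the handling of $\kappa \in \{0, 2\}$ is essentially correct and in line with what the paper does (citing \cite{2015arXiv150800682K} and, for contractible base loci in the general-type case, \cref{lemma:any SOD is f-linear}). The genuine gap is in $\kappa(X) = 1$ with $q(X) = 0$ and $p_g(X) > 0$. You assert that Konno's results force every $1$-dimensional component of $\baselocus(\omega_X)$ to be contractible in algebraic spaces, but the Konno theorems invoked in this paper (\cref{theorem:birational Konno,theorem:original fibered Konno}) concern birational contractions and fibrations of fiber genus $\ge 2$; neither applies to a properly elliptic surface. Concretely, if $f \colon X \to C$ carries a multiple fiber $m F$, Kodaira's formula gives $\omega_X \simeq f^{\ast} M \otimes \cO_X\bigl(\sum_i (m_i - 1) F_i\bigr)$ with $f_{\ast}\cO_X\bigl(\sum_i (m_i - 1) F_i\bigr) = \cO_C$, so every global section of $\omega_X$ vanishes along $F$ and hence $F \subseteq \baselocus(\omega_X)$; but $F^{2} = 0$, so Artin's negative-definiteness criterion fails and the contraction $g$ you want does not exist.

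The paper closes this case by a different mechanism, citing \cite[Theorem~4.2]{2015arXiv150800682K}: the canonical bundle formula makes $\omega_X$ restricted to each fiber \emph{torsion}, so once the dichotomy confines $\cB$ to finitely many fibers one has $b \otimes \omega_X^{\otimes N} \simeq b$ for $b \in \cB$ and suitable $N$, and Serre duality then upgrades the semiorthogonal decomposition to an orthogonal one, whence connectedness forces $\cB = 0$. That torsion argument, not an Artin contraction, is what you need to finish the $\kappa = 1$ case.
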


In order to tackle
\(
   X
\)
as in~\cref{corollary:remaining cases_Introduction}, we need a better understanding of the (absolute) canonical base locus
\(
   \baselocus
   (
    \omega
    _{
        X
    }
   )
\).
Unfortunately this does not seem to have been worked out in the literature, as it is mentioned as a very interesting question in the end of~\cite[Introduction]{MR2654305}.

%
%

\subsection*{Relations to other works}

This paper is a continuation of the series of works~\cite[]{MR2838062,2015arXiv150800682K}, which aims at~\cref{hypothesis:bolder hypothesis}. This paper came as a result of an attempt to interpret from a natural point of view the work~\cite[Theorem~1.5]{2021arXiv211006795C}, which identifies the paracanonical base locus as the relative base locus of the canonical line bundle with respect to the Albanese morphism, where the importance of the former in the context of semiorthogonal decompositions was first discovered in~\cite[Theorem~1.4]{2021arXiv210709564L}. See~\cref{example:PBs as relative canonical base locus} for details.

\cref{theorem:irregular surface is SI} is proved for some examples of surfaces of general type in~\cite[Theorem~4.16]{2021arXiv210709564L}, and Caucci~informed the author that they also obtained~\cref{theorem:irregular surface is SI} for surfaces of general type of maximal Albanese dimension (private communication).

In~\cref{section:Alternative proof via relative Hochschild homology} we give an alternative proof of~\cref{theorem:dichotomy} using the Hochschild homology, which is a relative version of what is discussed in~\cite[Section~5]{2020arXiv200607643P}. As is the case for the results of~\cite[Section~5]{2020arXiv200607643P}, actually we get slightly finer information than~\cref{theorem:dichotomy}. Though we do not pursuit it in this paper, it is interesting to ask if we can obtain results about~\cref{hypothesis:bolder hypothesis} analogous to~\cite[Proposition~5.7]{2020arXiv200607643P} from it.

During the preparation of this paper, an independent work~\cite[]{2023arXiv230106818L} appeared on the arXiv. Their main result~\cite[Theorem~2.3]{2023arXiv230106818L} is more general than~\cref{theorem:dichotomy-Introduction}, in that they do not assume that
\(
   X
\)
is nonsingular. See~\cref{remark:comparison} for detailed comparison.

%
%
\subsection*{Acknowledgements}

The author is indebted to Ana Cristina L\'opez Mart\'\i n and Fernando Sancho de Salas for their various comments on this paper, especially as to dualizing complexes, and valuable exchanges, Alexander Kuznetsov for various useful comments, and Federico Caucci for many comments and corrections. He also thanks Kotaro Kawatani, Kazuhiro Konno, and Xun Lin for useful discussions.

The author was partially supported by JSPS Grants-in-Aid for Scientific Research
(18H01120, 
19KK0348, 
20H01797, 
20H01794, 
21H04994
).

%
%
\section{Basics of relative semiorthogonal decomposition}

The aim of this section is to generalize the rudiments of semiorthogonal decompositions to the relative setting as much as possible.

\begin{definition}\label{definition:D(X)}
    Let \( X \) be a regular noetherian scheme of finite dimension. We let
    \(
    \derived ( X )
    \)
    denote the bounded derived category of coherent sheaves on \( X \).
\end{definition}

By the theorem of Serre, the natural embedding
\(
\perf ( X ) \hookrightarrow \derived ( X )
\)
is full for \( X \) as in~\cref{definition:D(X)}.

We use the Grothendieck duality in an essential way:

\begin{theorem}[{\(=\)\cite[Chapter~VII~Corollary~3.4~(c)]{MR0222093}}]\label{theorem:Grothendieck duality}
    Let

    \noindent
    \begin{align}
        f \colon X \to Y
    \end{align}
    be a proper morphism of finite dimensional noetherian schemes.
    Then there is a functor

    \noindent
    \begin{align}
        f ^{ ! }
        \colon
        \derived ^{ + } _{ \coh } ( Y )
        \to
        \derived ^{ + } _{ \coh } ( X )
    \end{align}
    which is right adjoint to the derived pushforward functor

    \noindent
    \begin{align}
        f _{ \ast } \colon \derived ^{ + } _{ \Qcoh } ( X )
        \to
        \derived ^{ + } _{ \Qcoh } ( Y )
    \end{align}
    in the sense that there exists a bi-functorial isomorphism in
    \(
        \derived ^{ + } _{ \Qcoh } ( Y )
    \)
    as follows for any
    \(
        \cF \in \derived ^{ - } _{ \Qcoh } ( X )
    \)
    and
    \(
        \cG
        \in
        \derived ^{ + } _{ \coh } ( Y )
    \).

    \noindent
    \begin{align}
        f _{ \ast } \cRHom _{ X } ( \cF, f ^{ ! } \cG )
        \to
        \cRHom _{ Y } ( f _{ \ast } \cF, \cG )
    \end{align}
\end{theorem}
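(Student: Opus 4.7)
The plan is to reduce the general proper case to two elementary building blocks --- smooth projective morphisms and closed immersions --- and then glue the constructions together. By Nagata compactification together with Chow's lemma, any proper morphism $f \colon X \to Y$ as in the statement can be dominated by a composition $X \hookrightarrow P \to Y$ with the first map a closed immersion and the second smooth and projective. If $f^{!}$ is constructed together with the required adjunction for each of these two classes, and if the construction is shown to be pseudofunctorial in composition, then the general case follows by factoring $f$ and composing the adjunctions.

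For a smooth projective morphism $p \colon P \to Y$ of relative dimension $d$, one sets
\[
p^{!} \cG := p^{\ast} \cG \otimes \omega_{P/Y}[d],
\]
where $\omega_{P/Y}$ is the relative canonical line bundle. The required adjunction is a relative form of Serre duality. It is verified first for $P = \mathbb{P}^{n}_{Y} \to Y$ by an explicit \v{C}ech cohomology computation producing a trace map $\mathrm{R} p_{\ast} \omega_{P/Y}[d] \to \cO_{Y}$ together with the usual cup-product pairing, and then extended to arbitrary smooth projective $p$ via flat base change, the projection formula, and the compatibility of $\cRHom$ with tensor products by locally free sheaves.

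For a closed immersion $i \colon Z \hookrightarrow X$ one sets
\[
i^{!} \cG := i^{-1} \cRHom_{X}\bigl( i_{\ast} \cO_{Z}, \cG \bigr).
\]
The object-level adjunction $\operatorname{Hom}_{X}(i_{\ast} \cF, \cG) \simeq \operatorname{Hom}_{Z}(\cF, i^{!} \cG)$ is tautological once one identifies $\cO_{Z}$-modules with $i_{\ast} \cO_{Z}$-modules; its upgrade to $\cRHom$ and then to a sheaf-level statement after $f_{\ast}$ is routine derived-category bookkeeping, since on the regular noetherian schemes at hand every coherent sheaf admits a bounded resolution by locally free sheaves and every coherent sheaf has finite injective dimension, so boundedness of the constructions is preserved.

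The main obstacle is the pseudofunctoriality step: any two factorizations of $f$ as (closed immersion) followed by (smooth projective morphism) must yield canonically isomorphic candidates for $f^{!}$ together with compatible adjunction data. This is the technical heart of Hartshorne's construction in~\cite{MR0222093}, executed there via the formalism of dualizing and residual complexes. Concretely, one establishes transitivity isomorphisms $(g \circ h)^{!} \simeq h^{!} \circ g^{!}$ separately for pairs of smooth morphisms and for pairs of closed immersions, and then a mixed \emph{flat base change} square reconciles the two, eventually gluing to a well-defined pseudofunctor that simultaneously recovers both elementary constructions and whose adjunction is the one claimed.
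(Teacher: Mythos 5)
The paper does not prove this theorem at all: it cites it directly from Hartshorne's \emph{Residues and Duality} (\cite[Chapter~VII~Corollary~3.4~(c)]{MR0222093}) and uses it as a black box in everything that follows. So the comparison is really against Hartshorne's construction, of which your sketch is a high-level paraphrase.

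The central gap in your outline is the assertion that an arbitrary proper morphism of noetherian schemes factors (or is ``dominated by'') a closed immersion into a scheme smooth and projective over $Y$. Such a factorization exists when $f$ is \emph{projective}, by embedding $X$ into some $\mathbb{P}^n_Y$, but fails for merely proper $f$. Chow's lemma produces a projective and birational $X' \to X$, not a factorization of $f$ itself, and one then has to descend the duality statement from $X'$ (and from various blow-ups relating different compactifications) down to $X$. That descent is exactly where the technical weight of Hartshorne's proof lies, via the machinery of residual and dualizing complexes; it is also precisely the point at which modern treatments (Neeman, Lipman, the Stacks Project) abandon the factor-and-glue strategy in favor of an abstract existence proof via Brown representability, precisely because the pseudofunctoriality bookkeeping for arbitrary proper $f$ is so delicate. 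You name Chow's lemma and Nagata compactification but never say what work they are doing, so the argument as written does not reach the stated generality. A secondary issue: your appeal to ``the regular noetherian schemes at hand'' to get bounded locally free resolutions and finite injective dimension imports a hypothesis that the theorem does not carry --- no regularity of $X$ or $Y$ is assumed --- and managing boundedness and coherence for general noetherian schemes is another of the reasons the residual-complex formalism is introduced in the first place.
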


The following special case of~\cref{theorem:Grothendieck duality} is particularly relevant to us (see~\cref{definition:dualizing complex,remark:relative canonical base locus} for the notion of dualizing complexes).

\begin{corollary}\label{corollary:Grothendieck duality}
    In the situation of~\cref{theorem:Grothendieck duality},
    there is a natural isomorphism as follows for any
    \(
        F, G \in \perf ( X )
    \), where
    \(
        \omega ^{ \bullet } _{ X },
        \omega ^{ \bullet } _{ Y }
    \)
    are dualizing complexes of \( X \) and \( Y \), respectively, satisfying
    \(
        f
        ^{
            !
        }
        \omega
        ^{
            \bullet
        }
        _{
            Y
        }
        \simeq
        \omega
        ^{
            \bullet
        }
        _{
            X
        }
    \).

    \noindent
    \begin{align}
        f _{ \ast }\cRHom _{ X } ( G, F \otimes \omega ^{ \bullet } _{ X } )
        \simeq
        \cRHom _{ Y } ( f _{ \ast } \cRHom _{ X } ( F, G ), \omega ^{ \bullet } _{ Y } )
    \end{align}
\end{corollary}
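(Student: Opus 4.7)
The plan is to deduce the claim from \cref{theorem:Grothendieck duality} by a single substitution, followed by a formal manipulation of \( \cRHom \) that uses dualizability of perfect complexes.

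First I would apply \cref{theorem:Grothendieck duality} to
\(
    \cF := \cRHom_{X}(F, G)
\),
which lies in \( \perf(X) \subset \derived^{-}_{\Qcoh}(X) \) since \( F \) and \( G \) are perfect, and to
\(
    \cG := \omega^{\bullet}_{Y} \in \derived^{+}_{\coh}(Y)
\).
Using the assumed compatibility \( f^{!} \omega^{\bullet}_{Y} \simeq \omega^{\bullet}_{X} \), the resulting isomorphism reads
\[
    f_{\ast} \cRHom_{X}( \cRHom_{X}(F, G), \omega^{\bullet}_{X} )
    \simeq
    \cRHom_{Y}( f_{\ast} \cRHom_{X}(F, G), \omega^{\bullet}_{Y} ),
\]
whose right-hand side is already the right-hand side of the target formula. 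It remains only to identify the left-hand side with the left-hand side of the target.

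For this I would establish an internal isomorphism
\[
    \cRHom_{X}( \cRHom_{X}(F, G), \omega^{\bullet}_{X} )
    \simeq
    \cRHom_{X}( G, F \otimes \omega^{\bullet}_{X} )
\]
in \( \derived(X) \) and then apply \( f_{\ast} \). This is a purely formal consequence of \( F \) being perfect. Writing \( F^{\vee} := \cRHom_{X}(F, \cO_{X}) \), dualizability of \( F \) provides the three standard identifications \( \cRHom_{X}(F, G) \simeq F^{\vee} \otimes G \), \( F^{\vee \vee} \simeq F \), and \( \cRHom_{X}(F^{\vee}, M) \simeq F \otimes M \) for arbitrary \( M \). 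Plugging the first into the left-hand side and then applying tensor-hom adjunction gives
\[
    \cRHom_{X}(F^{\vee} \otimes G, \omega^{\bullet}_{X})
    \simeq
    \cRHom_{X}( G, \cRHom_{X}(F^{\vee}, \omega^{\bullet}_{X}) )
    \simeq
    \cRHom_{X}(G, F \otimes \omega^{\bullet}_{X}),
\]
as desired.

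The main conceptual input is Grothendieck duality itself; everything else is formal calculus of perfect complexes. The only point requiring care is to confirm that each of the constituent isomorphisms is canonical, so that composing them produces a genuinely natural isomorphism (functorial in \( F \) and \( G \)) rather than merely an abstract one; this is standard. Boundedness and well-definedness of the \( \cRHom \)'s involved are automatic from \( F, G \in \perf(X) \) together with the boundedness of the dualizing complexes.
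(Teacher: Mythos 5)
Your proposal is correct and follows essentially the same route as the paper: the paper's one-line proof applies Grothendieck duality to \( \cF = F^{\vee} \otimes G \) and \( \cG = \omega^{\bullet}_{Y} \), which is the same substitution as yours since \( \cRHom_{X}(F,G) \simeq F^{\vee} \otimes G \) for perfect \( F \). You simply spell out in full the formal chain of identifications (tensor-hom adjunction, biduality) that the paper leaves implicit.
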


\begin{proof}
    Apply~\cref{theorem:Grothendieck duality} to
    \(
        \cF = F ^{ \vee } \otimes G
    \)
    and
    \(
        \cG = \omega ^{ \bullet } _{ Y }
    \).
\end{proof}

\begin{definition}\label{definition:semiorthogonal decomposition}
    Let \( f \colon X \to Y \) be a projective morphism of noetherian algebraic spaces such that
    \(
       X
    \)
    is regular and of finite dimension. An
    \emph{\( f \)-linear semiorthogonal decomposition}
    (alternatively
    \emph{
        semiorthogonal decomposition of \( \derived ( X ) \) relative to \( f \)
    }
    or
    \emph{
        semiorthogonal decomposition of \( \derived ( X ) \) over \( Y \)
    })
    of
    \(
        \derived ( X )
    \)
    (of length \( 2 \)) is a pair of (full) triangulated subcategories
    \(
        \cA, \cB
        \subseteq
        \derived ( X )
    \)
    such that
    
    \begin{itemize}
        \item \( \derived ( X ) \) coincides with the smallest triangulated subcategory containing both \( \cA \) and \( \cB \), and
        \item The pair is semiorthogonal over \( Y \), in the sense that
              \begin{align}
                  f _{ \ast } \cRHom _{ X } ( b, a ) = 0
              \end{align}
              for any \( a \in \cA \) and \( b \in \cB \).
    \end{itemize}
    We use the shorthand notation
    \begin{align}\label{equation:D(X)=<A,B>}
        \derived ( X )
        =
        \langle
            \cA,
            \cB
        \rangle
        _{
            f
        }
    \end{align}
    to indicate the situation.

    An
    \(
       f
    \)-linear semiorthogonal decomposition for the structure morphism
    \(
       f
       \colon
       X
       \to
       \Spec \bZ
    \)
    is simply called
    \emph{
        semiorthogonal decomposition
    }
    and denoted by
    
    \noindent
    \begin{align}
        \derived
        (
            X
        )
        =
        \langle
            \cA,
            \cB
        \rangle.
    \end{align}
\end{definition}

\begin{remark}\label{remark:transitivity}
    In the situation of~\cref{definition:semiorthogonal decomposition}, let
    \(
       g
       \colon
       Y
       \to
       Z
    \)
    be a projective morphism of noetherian algebraic spaces and put
    \(
       h
       =
       g
       \circ
       f
    \).
    Then an
    \(
       f
    \)-linear semiorthogonal decomposition of
    \(
       X
    \)
    is automatically an
    \(
       h
    \)-linear semiorthogonal decomposition.
    In particular, any
    \(
       f
    \)-linear semiorthogonal decomposition is a semiorthogonal decomposition.
\end{remark}

\begin{definition}\label{definition:orthogonal decomposition}
    In the situation of~\cref{definition:semiorthogonal decomposition}, an
    \(
       f
    \)-linear semiorthogonal decomposition~\eqref{equation:D(X)=<A,B>} is said to be an~\emph{orthogonal decomposition} if it also satisfies
    
    \noindent
    \begin{align}
        \derived
        (
            X
        )    
        =
        \langle
            \cB,
            \cA    
        \rangle
        _{
            f
        }.
    \end{align}
    We indicate the situation by
    
    \noindent
    \begin{align}
        \derived
        (
            X
        )    
        =
        \cA
        \perp
        _{
            f
        }
        \cB.
    \end{align}
\end{definition}

\begin{definition}\label{definition:semiorthogonal indecomposability}
    In the situation~\cref{definition:semiorthogonal decomposition} we say that \emph{\( X \) is semiorthogonally indecomposable over \( Y \) (SI over \( Y \))} or \emph{\( f \) is semiorthogonally indecomposable (\(f\) is SI)} if any \( f \)-linear semiorthogonal decomposition of \( X \) is an orthogonal decomposition.
    We say \( X \) is semiorthogonally indecomposable (SI) if it is so over
    \(
        \Spec \bZ
    \).
\end{definition}

\begin{remark}
    If \( X \) is connected, then \( X \) being SI implies that any semiorthogonal decomposition of
    \(
    \derived ( X )
    \)
    is trivial; i.e., there is no semiorthogonal decomposition of \( \derived ( X ) \) other than
    \(
    \langle 0, \derived ( X ) \rangle
    \)
    and
    \(
    \langle \derived ( X ), 0 \rangle
    \).
\end{remark}

As we pointed out in~\cref{remark:transitivity},
\(
   f
\)-linear semiorthogonal decompositions are semiorthogonal decompositions satisfying an additional condition.~\cref{lemma:characterization of f-linearity} below, which is a slight generalization of~\cite[Lemma~2.7]{MR2801403}, characterizes the condition in a slightly different way. 
This guarantees that~\cref{definition:semiorthogonal decomposition} is equivalent to the original definition of
\(
   f
\)-linearity given in~\cite{MR2238172,MR2801403}.

\begin{lemma}\label{lemma:characterization of f-linearity}
    Suppose that
    \(
       Y
    \)
    is a quasi-separated and quasi-compact scheme. Then a semiorthogonal decomposition
    \(
    \derived ( X ) = \langle \cA, \cB \rangle
    \)
    is \( f \)-linear if and only if \( \cA \) (resp. \( \cB \)) is stable under the action of
    \(
    \perf ( Y )
    \), in the sense that
    \(
    a \otimes f ^{ \ast } y \in \cA
    \)
    for all
    \(
    a \in \cA
    \)
    and
    \(
    y \in \perf ( Y )
    \)
    (resp.
    \(
    b \otimes f ^{ \ast } y \in \cB
    \)
    for all
    \(
    b \in \cB
    \)
    and
    \(
    y \in \perf ( Y )
    \)).
\end{lemma}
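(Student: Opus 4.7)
The plan is to reduce both directions of the equivalence to a single computation rooted in the projection formula for $f$ together with the compact generation of $\derived_{\Qcoh}(Y)$ by perfect complexes. First I would record the key identity: for any $a, b \in \derived(X)$ and any $y \in \perf(Y)$, the projection formula (valid because $f^{\ast} y$ is perfect, hence dualizable on $X$) combined with the factorization $\Gamma(X, -) = \Gamma(Y, f_{\ast}(-))$ yields a natural isomorphism
\[
    \Hom_{X}^{\bullet}(b, a \otimes f^{\ast} y)
    \simeq
    \Hom_{Y}^{\bullet}(y^{\vee}, f_{\ast} \cRHom_{X}(b, a)).
\]

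From this the ``only if'' direction is immediate: if the SOD is $f$-linear, then $f_{\ast} \cRHom_{X}(b, a) = 0$ for every $a \in \cA$ and $b \in \cB$, so the right-hand side vanishes for every $y \in \perf(Y)$; hence the left-hand side vanishes too, which places $a \otimes f^{\ast} y$ in $\cB^{\perp} = \cA$, proving stability of $\cA$. The stability of $\cB$ follows from the analogous identity $\Hom_{X}^{\bullet}(b \otimes f^{\ast} y, a) \simeq \Hom_{Y}^{\bullet}(y, f_{\ast} \cRHom_{X}(b, a))$. Conversely, if $\cA$ is stable under $\perf(Y)$, then for any $a \in \cA$, $b \in \cB$, $y \in \perf(Y)$ the left-hand side of the identity vanishes by semiorthogonality, so $\cG := f_{\ast} \cRHom_{X}(b, a) \in \derived_{\Qcoh}(Y)$ satisfies $\Hom_{Y}^{\bullet}(y^{\vee}, \cG) = 0$ for every $y \in \perf(Y)$, equivalently $\Hom_{Y}^{\bullet}(y, \cG) = 0$ for every perfect $y$, since $\perf(Y)$ is closed under duality.

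The step I expect to be the main obstacle, or at least the only nontrivial input, is the conclusion that such a $\cG$ must then vanish. This is precisely where the hypothesis that $Y$ is a quasi-compact quasi-separated scheme enters: by the theorem of Bondal--Van den Bergh, $\perf(Y)$ compactly generates $\derived_{\Qcoh}(Y)$, so the vanishing of $\Hom_{Y}^{\bullet}(y, \cG)$ against every perfect $y$ forces $\cG = 0$, establishing $f$-linearity. Everything else in the argument is a formal manipulation of the projection formula and of the orthogonality relations inside the SOD.
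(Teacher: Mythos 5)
Your proposal matches the paper's proof in substance: both directions rest on the projection-formula identity $\RHom_{X}(b, a \otimes f^{\ast} y) \simeq \RHom_{Y}(y^{\vee}, f_{\ast}\cRHom_{X}(b,a))$, with the ``if'' direction closed by invoking Bondal--Van den Bergh generation of $\derived_{\Qcoh}(Y)$ (equivalently $\derived^{\bounded}\coh Y$) by perfect complexes together with duality on $\perf(Y)$. The argument is correct and essentially identical to the one in the paper.
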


\begin{proof}
    If the semiorthogonal decomposition is \( f \)-linear, then
    \begin{align}
        f _{ \ast } \cRHom _{ X } ( b, a ) \otimes y = 0
    \end{align}
    for all
    \(
    a \in \cA, b \in \cB, y \in \perf ( Y )
    \).
    Then \( \bR \Gamma ( Y, ? ) \) of the left hand side is isomorphic to
    \(
    \RHom _{ X } ( b, a \otimes f ^{ \ast } y )
    \). Thus we have confirmed that
    \(
    a \otimes f ^{ \ast } y
    \in
    \cB ^{ \perp }
    =
    \cA
    \).
    Conversely the vanishing of
    \begin{align}
        \RHom _{ X } ( b, a \otimes f ^{ \ast } y )
        \simeq
        \RHom _{ Y } ( y ^{ \vee }, f _{ \ast } \cRHom _{ X } ( b, a ) )
    \end{align}
    for all \( y \in \perf ( Y ) \) implies the vanishing of
    \(
    f _{ \ast } \cRHom _{ X } ( b, a )
    \), since
    \(
    {
        ( - )
    } ^{ \vee }
    \)
    is a (contravariant) autoequivalence of \( \perf ( Y ) \) and
    \( \perf ( Y ) \) generates \( \derived ^{ b } \coh Y \) (\cite[Theorem~3.1.1]{Bondal-van_den_Bergh}).
\end{proof}

\begin{definition}[{\(=\) the original version of~\cite[Definition~1.3]{2020arXiv201112743P}}]\label{definition:SSI}
    A noetherian scheme \( Y \) is said to be \emph{stably semiorthogonally indecomposable} (\emph{SSI} for short) if any proper morphism \( f \colon X \to Y \) from a regular scheme \( X \), any semiorthogonal decomposition of
    \(
       \derived
       (
        X
       )
    \)
    is \( f \)-linear.
\end{definition}

In~\cite{2020arXiv201112743P} Pirozhkov proved that a scheme which admits a finite morphism to an abelian variety is SSI\@. This paper heavily relies on this result.

\begin{remark}\label{remark:SSI vs NSSI}
    In fact~\cite[Definition~1.3]{2020arXiv201112743P} introduces a notion which is stronger than SSI\@. It is called \emph{NSSI} (noncommutatively SSI), and Pirozhkov proved that the aforementioned class of schemes belong to this category. However, the notion of SSI is sufficient for the purpose of this paper.
\end{remark}

\begin{remark}\label{remark:SSI implies SI}
    As shown in~\cite[Lemma~2.3]{2020arXiv201112743P}, if \( Y \) is a smooth proper variety and SSI, then it is SI\@.
\end{remark}

\begin{example}
    A noetherian affine scheme \( Y = \Spec R \) is SSI since \( R \in \perf (Y) \) is a classical generator (in the sense of~\cite{Bondal-van_den_Bergh}).
\end{example}

The following remark is obvious but plays a central role in this paper.
\begin{remark}\label{remark:transitivity of SI}
    Let
    \(
    f \colon X \to Y
    \)
    be as in~\cref{definition:semiorthogonal decomposition}. Suppose that \( Y \) is SSI and \( f \) is SI\@. Then \( X \) is SI\@.
\end{remark}

%
%

\section{Relative semiorthogonal indecomposability}

%
%

\subsection{Relative base locus}

\begin{definition}\label{definition:relative base locus}
    Let
    \(
    f \colon X \to Y
    \)
    be a morphism of algebraic spaces, and let \( L \) be an invertible sheaf on \( X \).
    The \emph{base locus of \( L \) relative to \( f \)} or the \emph{(relative) base locus of \( L \) over \( Y \)} is the closed subspace \( \baselocus _{ f } ( L ) \hookrightarrow X \) whose structure sheaf is defined as follows.
    \begin{align}\label{equation:defining sequence of Bsf(L)}
        f ^{ \ast } f _{ \ast } L \otimes _{ \cO _{ X } } L ^{ \vee } \to \cO _{ X }
        \to
        \cO _{ \baselocus _{ f } ( L ) }
        \to
        0
    \end{align}

    We call \( \baselocus _{ f } ( \omega _{ X } ) \) the \emph{(relative) canonical base locus of \( f \)} or the \emph{canonical base locus of \( X \) over \( Y \) (relative to \(f\))}.
\end{definition}

\begin{remark}\label{remark:relative base locus when target affine}
    If \( Y \) is affine in~\cref{definition:relative base locus}, then the first map of~\eqref{equation:defining sequence of Bsf(L)} is canonically isomorphic to the map
    \(
    H ^{ 0 } ( X, L ) \otimes L ^{ - 1 } \to \cO _{ X }
    \) and hence
    \(
    \baselocus _{ f } ( L )
    \)
    is nothing but the usual base locus
    \(
    \baselocus ( L )
    \).
\end{remark}

\begin{lemma}\label{lemma:invariance}
    In the situation of~\cref{definition:relative base locus}, the following equality of closed subspaces of \( X \) holds for any invertible sheaf
    \(
        M
    \)
    on \( Y \).

    \noindent
    \begin{align}
        \baselocus _{ f } ( L ) = \baselocus _{ f } ( L \otimes f ^{ \ast } M )
    \end{align}
\end{lemma}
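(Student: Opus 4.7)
The plan is to reduce the claim to the projection formula. By \cref{definition:relative base locus}, the closed subspace $\baselocus_f(L \otimes f^*M) \hookrightarrow X$ is cut out by the image of the natural evaluation morphism
\begin{align}
    f^*f_*(L \otimes f^*M) \otimes (L \otimes f^*M)^\vee \to \cO_X,
\end{align}
and similarly for $\baselocus_f(L)$. Thus the lemma is equivalent to showing that these two evaluation morphisms have the same image ideal sheaf in $\cO_X$.

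First I would invoke the projection formula to obtain a canonical isomorphism $f_*(L \otimes f^*M) \cong f_*L \otimes M$, which upon pulling back by $f$ yields $f^*f_*(L \otimes f^*M) \cong f^*f_*L \otimes f^*M$. Combined with the obvious identification $(L \otimes f^*M)^\vee \cong L^\vee \otimes f^*M^\vee$ and the cancellation $f^*M \otimes f^*M^\vee \cong \cO_X$, this gives a canonical isomorphism of source terms
\begin{align}
    f^*f_*(L \otimes f^*M) \otimes (L \otimes f^*M)^\vee \cong f^*f_*L \otimes L^\vee.
\end{align}

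The remaining step is to check that this isomorphism intertwines the two evaluation maps to $\cO_X$. This is a compatibility between the projection formula isomorphism and the counit of the adjunction $f^* \dashv f_*$, which is standard and reduces to an elementary computation on sections once one writes the projection formula morphism as $f^*(f_*L \otimes M) \xrightarrow{} f^*f_*L \otimes f^*M \xrightarrow{\varepsilon \otimes \id} L \otimes f^*M$ and contracts with the duality pairings. Since both evaluation maps agree under the canonical identification, their images in $\cO_X$ coincide, and hence so do the cokernels, yielding the claimed equality of closed subspaces.

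The main obstacle is purely bookkeeping: one must keep track of the naturality of the projection formula with respect to the counit of $f^* \dashv f_*$, but no deep input is required. Since $Y$ is assumed to be an algebraic space rather than a scheme, I would note in passing that the projection formula in this generality is available (for instance because the statement is local on $Y$ in the smooth topology, where $M$ trivializes and the claim becomes tautological).
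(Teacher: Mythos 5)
Your proof follows the same approach as the paper: both reduce the statement to the projection formula isomorphism $f_*(L \otimes f^*M) \simeq f_*L \otimes M$, pull back, and cancel $f^*M$ to identify the source of the two evaluation maps. You are somewhat more explicit than the paper about the need to check that this identification intertwines the two counit morphisms (the paper simply says the assertion ``immediately follows''), which is a worthwhile elaboration but not a different argument.
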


\begin{proof}
    The projection formula
    
    \noindent
    \begin{align}\label{equation:projection formula}
        f
        _{
            \ast
        }
        \left(
            L
            \otimes
            f
            ^{
                \ast
            }
            M
        \right)
        \simeq
        f
        _{
            \ast
        }
        L
        \otimes
        M
    \end{align}
    implies
    
    \noindent
    \begin{align}
        f
        ^{
            \ast
        }       
        f
        _{
            \ast
        }
        \left(
            L
            \otimes
            f
            ^{
                \ast
            }
            M
        \right)
        \otimes
        \left(
            L
            \otimes
            f
            ^{
                \ast
            }
            M
        \right)
        ^{
            - 1
        }
        \stackrel{
            {\eqref{equation:projection formula}}
        }{
            \simeq
        }
        f
        ^{
            \ast
        }
        f
        _{
            \ast
        }
        L
        \otimes
        f
        ^{
            \ast
        }
        M
        \otimes        
        \left(
            L
            \otimes
            f
            ^{
                \ast
            }
            M
        \right)
        ^{
            - 1
        }        
        \simeq
        f
        ^{
            \ast
        }
        f
        _{
            \ast
        }
        L
        \otimes
        L
        ^{
            - 1
        }.
    \end{align}
    The assertion immediately follows from this.
\end{proof}

\begin{lemma}\label{lemma:flat base change}
    In the situation of~\cref{definition:relative base locus}, let
    \(
        y \colon Y ' \to Y
    \)
    be a flat morphism and consider the following Cartesian diagram.

    \noindent
    \begin{equation}\label{equation:cartesian diagram}
        \begin{tikzcd}
            X ' \arrow[r, "\ytilde"] \arrow[d, "f '"']
            &
            X \arrow[d,"f"]\\
            Y ' \arrow[r, "y"']
            &
            Y
        \end{tikzcd}
    \end{equation}
    
    Then
    \begin{align}
        \baselocus _{ f ' } ( \ytilde ^{ \ast } L )
        =
        \ytilde ^{ - 1 } \left( \baselocus _{ f } ( L ) \right).
    \end{align}
\end{lemma}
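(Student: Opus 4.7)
The plan is to apply the pullback functor $\ytilde^*$ to the defining exact sequence~\eqref{equation:defining sequence of Bsf(L)} of $\baselocus_f(L)$ and identify the result term-by-term with the defining sequence of $\baselocus_{f'}(\ytilde^* L)$. Since $\ytilde^*$ is right exact (being a left adjoint) and commutes with tensor products of $\cO$-modules, its application to~\eqref{equation:defining sequence of Bsf(L)} yields an exact sequence of the form
\begin{align}
    \ytilde^* f^* f_* L \otimes (\ytilde^* L)^\vee
    \to
    \cO_{X'}
    \to
    \ytilde^* \cO_{\baselocus_f(L)}
    \to 0.
\end{align}

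The heart of the argument is the identification of the leftmost term with $f'^* f'_*(\ytilde^* L) \otimes (\ytilde^* L)^\vee$. This uses two standard inputs: (i) the canonical isomorphism $\ytilde^* f^* \simeq f'^* y^*$ coming from the Cartesian square~\eqref{equation:cartesian diagram}, and (ii) the flat base change isomorphism $y^* f_* L \simeq f'_* \ytilde^* L$, where flatness of $y$ is essential. I would then verify that under these two identifications the pullback of the evaluation map $f^* f_* L \to L$ agrees with the evaluation map $f'^* f'_*(\ytilde^* L) \to \ytilde^* L$; this is a compatibility between the units of the adjunctions $(f^*, f_*)$ and $(f'^*, f'_*)$ under flat base change, and is the only step requiring genuine bookkeeping.

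Granting this compatibility, the cokernel of the pulled-back sequence is on the one hand $\cO_{\baselocus_{f'}(\ytilde^* L)}$ by~\cref{definition:relative base locus}, and on the other hand equals $\ytilde^* \cO_{\baselocus_f(L)} = \cO_{\ytilde^{-1}(\baselocus_f(L))}$ by the very construction of the scheme-theoretic preimage. Comparing the two presentations gives the desired equality of closed subspaces of $X'$. The main potential obstacle is ensuring that the flat base change formula for $f_* L$ and the evaluation-map compatibility both hold with the generality of algebraic spaces stated in~\cref{definition:relative base locus}, but no ingredient beyond the standard flat base change theorem and the projection formula~\eqref{equation:projection formula} is required.
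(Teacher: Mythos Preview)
Your proposal is correct and follows essentially the same approach as the paper: the key input is the flat base change isomorphism \(y^{\ast} f_{\ast} L \simeq f'_{\ast}\ytilde^{\ast} L\), from which the identification of the two defining sequences follows. The paper's proof is terse and simply cites this isomorphism, whereas you have spelled out the right-exactness of \(\ytilde^{\ast}\) and the compatibility of the counits; these are precisely the details implicit in the paper's phrase ``the assertion follows from this.''
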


\begin{proof}
    It follows from the flatness of \( y \) that the base change map
    \begin{align}
        y ^{ \ast } f _{ \ast } L \to f ' _{ \ast } \ytilde ^{ \ast } L
    \end{align}
    is an isomorphism (\cite[Chapter~III~Proposition~9.3]{Hartshorne}). The assertion follows from this.
\end{proof}

\begin{lemma}\label{lemma:transitivity}
    In the situation of~\cref{definition:relative base locus}, suppose we have another morphism of algebraic spaces
    \(
       g
       \colon
       Y
       \to
       Z
    \).
    Then there is a canonical inclusion of closed subspaces of \( X \) as follows.

    \noindent
    \begin{align}\label{equation:comparison of relative base loci}
        \baselocus _{ g \circ f } ( L )
        \hookrightarrow
        \baselocus _{ f } ( L )
    \end{align}
    Moreover,~\eqref{equation:comparison of relative base loci} is an equality if \( g \) is an affine morphism.
\end{lemma}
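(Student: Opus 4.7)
The plan is to exhibit a canonical factorisation of the evaluation morphism $h ^{\ast} h _{\ast} L \otimes L ^{\vee} \to \cO _{X}$ through its $f$-counterpart and then read off the inclusion of base loci by taking cokernels; here I write $h = g \circ f$. First I would apply $f ^{\ast}$ to the counit $g ^{\ast} g _{\ast} (f _{\ast} L) \to f _{\ast} L$ of the adjunction $(g ^{\ast}, g _{\ast})$ and combine this with the identifications $h ^{\ast} = f ^{\ast} g ^{\ast}$ and $h _{\ast} L = g _{\ast} f _{\ast} L$, obtaining a canonical morphism $h ^{\ast} h _{\ast} L \to f ^{\ast} f _{\ast} L$. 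The standard compatibility of counits under composition of adjunctions ensures that the $(h ^{\ast}, h _{\ast})$-counit $h ^{\ast} h _{\ast} L \to L$ factors as $h ^{\ast} h _{\ast} L \to f ^{\ast} f _{\ast} L \to L$, with the second arrow being the $(f ^{\ast}, f _{\ast})$-counit.

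Tensoring this factorisation with $L ^{\vee}$ yields a commutative triangle over $\cO _{X}$ in which the image of $h ^{\ast} h _{\ast} L \otimes L ^{\vee}$ is contained in that of $f ^{\ast} f _{\ast} L \otimes L ^{\vee}$. Passing to cokernels via the defining presentations~\eqref{equation:defining sequence of Bsf(L)} for the two base loci then produces a surjection $\cO _{\baselocus _{f} ( L )} \twoheadrightarrow \cO _{\baselocus _{g \circ f} ( L )}$, which is exactly the desired closed immersion~\eqref{equation:comparison of relative base loci}.

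For the equality when $g$ is affine, it suffices to verify that the comparison morphism $h ^{\ast} h _{\ast} L \to f ^{\ast} f _{\ast} L$ is surjective, since then the images of the two evaluation maps in $\cO _{X}$ coincide. By right exactness of $f ^{\ast}$ this reduces to surjectivity of the counit $g ^{\ast} g _{\ast} (f _{\ast} L) \to f _{\ast} L$, which is local on $Z$. On an affine chart $\Spec A \subseteq Z$ over which $g$ corresponds to a ring homomorphism $A \to B$ and $f _{\ast} L$ to a $B$-module $N$, this counit is the multiplication map $N \otimes _{A} B \to N$, $n \otimes b \mapsto bn$, which is evidently surjective. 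The argument is otherwise entirely formal and I anticipate no serious obstacle; the only mild subtlety is that the input sheaf for the counit is $f _{\ast} L$ rather than an arbitrary quasi-coherent sheaf, which makes no difference since the surjectivity check above works for any $B$-module.
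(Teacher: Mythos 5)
Your proposal is correct and follows essentially the same route as the paper: you factor the $(g\circ f)$-counit through the $f$-counit via $f^{\ast}$ applied to the $g$-counit (the compatibility of counits under composed adjunctions), deduce the closed immersion of base loci from the resulting containment of images, and in the affine case observe that the comparison arrow $h^{\ast}h_{\ast}L \to f^{\ast}f_{\ast}L$ is an epimorphism because the $g$-counit is surjective. The only difference is cosmetic: the paper states the surjectivity of the $g$-counit for affine $g$ without proof, whereas you spell it out locally as the multiplication map $N\otimes_{A}B\to N$, which is a worthwhile addition.
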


\begin{proof}
    Note that there is a following commutative diagram as follows, where each arrow is the counit of the adjoint pair of functors next to it.

    \noindent
    \begin{equation}
        \begin{tikzcd}
            ( g \circ f ) ^{ \ast } ( g \circ f ) _{ \ast } L
            \arrow[r, equal]
            &
            f ^{ \ast } g ^{ \ast } g _{ \ast } f _{ \ast } L
            \arrow[r,"g ^{ \ast } \dashv g _{ \ast }"]
            \arrow[
                rd,
                "
                \left(
                    g
                    \circ
                    f
                \right) ^{ \ast }
                \dashv
                \left(
                    g
                    \circ
                    f
                \right) _{ \ast }
                "'
            ]
            &
            f ^{ \ast } f _{ \ast } L \arrow[d, "f ^{ \ast } \dashv f _{ \ast }"]\\
            &
            &
            L
        \end{tikzcd}
    \end{equation}
    The first assertion immediately follow from this.
    The second assertion follows from that the horizontal arrow is an epimorphism if
    \(
       g
    \)
    is affine.
\end{proof}

\begin{example}\label{example:empty relative base locus}
    Since
    \(
        \baselocus _{ \id } ( L )
        =
        \emptyset
    \)
    for the obvious reason,~\cref{lemma:transitivity} implies that
    \(
        \baselocus _{ g } ( L )
        =
        \emptyset
    \)
    for any affine (in particular finite) morphism \( g \).
\end{example}

In usual situation of birational geometry, there is an alternative description of the relative base locus as follows.

\begin{lemma}\label{lemma:alternative description of the relative base locus}
    In the situation of~\cref{definition:relative base locus}, suppose further that
    \(
       f
    \)
    is projective and that  \( Y \) is a quasi-projective variety.
    Then there is an alternative description for the relative base locus as follows.

    \noindent
    \begin{align}\label{equation:alternative description of the relative base locus}
        \cO _{ \baselocus _{ f } ( L ) }
        \simeq
        \coker
        \left(
        \bigoplus _{ \substack{M \in \Pic Y} } H ^{ 0 } ( X, L \otimes f ^{ \ast } M )
        \otimes _{ \bfk }
        {
            \left(
                L \otimes f ^{ \ast } M
             \right)
        } ^{ - 1 }
        \to
        \cO _{ X }
        \right)
    \end{align}
    Moreover, we have the following equality for any sufficiently ample invertible sheaf
    \(
       M
    \)
    on
    \(
       Y
    \).

    \noindent
    \begin{align}\label{equation:relative base locus is a base locus}
        \baselocus _{ f } ( L ) = \baselocus ( L \otimes f ^{ \ast } M )
    \end{align}
\end{lemma}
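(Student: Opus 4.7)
The plan is to reduce both sides to the counit map $f^*f_*(L \otimes f^*M) \otimes (L \otimes f^*M)^{-1} \to \cO_X$, whose cokernel was identified with $\cO_{\baselocus_f(L)}$ via~\cref{lemma:invariance} and the projection formula.

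First, for every $M \in \Pic Y$, I will factor the global evaluation
\begin{align*}
\mathrm{ev}_M \colon H^0(X, L \otimes f^*M) \otimes_\bfk (L \otimes f^*M)^{-1} \longrightarrow \cO_X
\end{align*}
through that counit. Concretely, under the identification $H^0(X, L \otimes f^*M) = H^0(Y, f_*(L \otimes f^*M))$ coming from $\Gamma(Y, f_*-) = \Gamma(X, -)$, the $\cO_Y$-linear evaluation on $Y$ pulls back along $f$ to a map $H^0(X, L \otimes f^*M) \otimes \cO_X \to f^*f_*(L \otimes f^*M)$; composing with the counit and tensoring with $(L \otimes f^*M)^{-1}$ recovers $\mathrm{ev}_M$ by the triangle identity for $f^* \dashv f_*$. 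Together with~\cref{lemma:invariance}, this places the image of each $\mathrm{ev}_M$ inside the ideal sheaf of $\baselocus_f(L)$, so the sum of these images remains inside that ideal, the cokernel on the right-hand side of~\eqref{equation:alternative description of the relative base locus} surjects onto $\cO_{\baselocus_f(L)}$, and each individual $\baselocus(L \otimes f^*M)$ contains $\baselocus_f(L)$ scheme-theoretically.

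For the reverse inclusion and for~\eqref{equation:relative base locus is a base locus}, I will fix an ample line bundle $M$ on the quasi-projective $Y$. Since $f$ is projective, $f_*L$ is coherent, so Serre's theorem ensures that $f_*L \otimes M^{\otimes n} \simeq f_*(L \otimes f^*M^{\otimes n})$ is globally generated for $n \gg 0$. For such an $n$, the surjection $H^0(Y, f_*(L \otimes f^*M^{\otimes n})) \otimes \cO_Y \twoheadrightarrow f_*(L \otimes f^*M^{\otimes n})$ pulls back, via right exactness of $f^*$, to a surjection $H^0(X, L \otimes f^*M^{\otimes n}) \otimes \cO_X \twoheadrightarrow f^*f_*(L \otimes f^*M^{\otimes n})$. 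Combined with the factorization of the previous step, the image of $\mathrm{ev}_{M^{\otimes n}}$ must then equal that of the counit, which is the full ideal of $\baselocus_f(L)$. Both~\eqref{equation:alternative description of the relative base locus} and~\eqref{equation:relative base locus is a base locus} follow simultaneously.

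No serious obstacle is anticipated; the only subtlety is keeping the adjunction-compatibility of the evaluation maps precise, and this reduces to exactly the projection formula already invoked in the proof of~\cref{lemma:invariance}.
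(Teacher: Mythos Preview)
Your proposal is correct and follows essentially the same route as the paper: factor each evaluation $\mathrm{ev}_M$ through the counit $f^*f_*(L\otimes f^*M)\to L\otimes f^*M$ (using the projection formula and \cref{lemma:invariance}) to obtain one inclusion, and then for sufficiently ample $M$ use global generation of $f_*L\otimes M$ to make the factorization surjective, yielding the reverse inclusion and~\eqref{equation:relative base locus is a base locus}. The only cosmetic difference is that the paper packages the adjunction compatibility into a commutative diagram of counits rather than invoking the triangle identity by name.
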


\begin{proof}
    To ease notation, let \( B \hookrightarrow X \) be the closed subscheme of \( X \) defined by the right hand side of~\eqref{equation:alternative description of the relative base locus} and let
    \(
       g
       \colon
       Y
       \to
       \Spec \bfk
    \)
    be the structure morphism of
    \(
       Y
    \).
    There is a following commutative diagram for any invertible sheaf \( M \in \Pic Y \).

    \noindent
    \begin{equation}
        \begin{tikzcd}
            f ^{ \ast } f _{ \ast } L \otimes L ^{ - 1 }
            \arrow[
                d,
                "\simeq",
                "\eqref{equation:projection formula}"'
            ]
            \arrow[dr] & \\
            f ^{ \ast } f _{ \ast } \left( L \otimes f ^{ \ast } M \right)
            \otimes
            \left( L \otimes f ^{ \ast } M \right) ^{ - 1 } \arrow[r]
            & \cO _{ X }\\
            H ^{ 0 } ( X, L \otimes f ^{ \ast } M )
            \otimes _{ \bfk }
            \left( L \otimes f ^{ \ast } M \right) ^{ - 1 }
            \arrow[
                u,
                "g ^{ \ast }
                \dashv
                g _{ \ast }"
            ]
            \arrow[ur] &
        \end{tikzcd}
    \end{equation}
    Thus we obtain the following commutative diagram, which implies that there is a natural morphism from the right hand side to the left hand side of~\eqref{equation:alternative description of the relative base locus}.

    \noindent
    \begin{equation}\label{equation:sum of counits}
        \begin{tikzcd}
            f ^{ \ast } f _{ \ast } L \otimes L ^{ - 1 } \arrow[dr] &\\
            \bigoplus _{ \substack{M \in \Pic Y} } H ^{ 0 } ( X, L \otimes f ^{ \ast } M )
            \otimes _{ \bfk }
            \left( L \otimes f ^{ \ast } M \right) ^{ - 1 }
            \arrow[u]
            \arrow[r]
            & \cO _{ X }
        \end{tikzcd}
    \end{equation}
    On the other hand, let \( M \) be sufficiently ample so that
    \(
        f _{ \ast } L \otimes M
    \)
    is a globally generated coherent sheaf on
    \(
       Y
    \). Then we obtain the epimorphism

    \noindent
    \begin{align}
        H ^{ 0 } ( Y, f _{ \ast } L \otimes M ) \otimes _{ \bfk } M ^{ - 1 }
        \twoheadrightarrow
        f _{ \ast } L,
    \end{align}
    from which we further obtain the epimorphism

    \noindent
    \begin{align}
        H ^{ 0 } ( Y, f _{ \ast } L \otimes M ) \otimes _{ \bfk }
        {
            \left(
                L \otimes f ^{ \ast } M
            \right)
        } ^{ - 1 }
        \twoheadrightarrow
        f ^{ \ast } f _{ \ast } L \otimes L ^{ - 1 }.
    \end{align}
    Thus we have confirmed the surjectivity of the vertical arrow of~\eqref{equation:sum of counits}, from which the assertion immediately follows.
\end{proof}

%
%

\subsection{The relative version of~\cite[]{2015arXiv150800682K}}

In this section we establish the relative version of~\cite[Theorem~3.1]{2015arXiv150800682K}, which constrains semiorthogonal decompositions in terms of the base locus of the canonical linear system. We prove that the relative base locus of the canonical bundle constrains relative semiorthogonal decompositions.

Let us first clarify what we mean by dualizing complexes and their relative base loci.

\begin{definition}\label{definition:dualizing complex}
    Let
    \(
       f
       \colon
       X
       \to
       Y
    \)
    be a proper morphism of finite dimensional connected noetherian schemes such that
    \(
       X
    \)
    is regular and
    \(
       Y
    \)
    admits a dualizing complex
    \(
       \omega
       _{
        Y
       }
       ^{
        \bullet
       }
       \in
       \derived
       ^{
        \bounded
       }
       \coh
       Y
    \)
    (see, say,~\cite[\href{https://stacks.math.columbia.edu/tag/0A87}{Tag~0A87}]{stacks-project} for the definition of dualizing complexes).
    Choose
    \(
        \omega
        _{
         Y
        }
        ^{
         \bullet
        }       
    \)
    such that

    \noindent
    \begin{align}
        \omega
        _{
         X
        }
        ^{
         \bullet
        }
        \coloneqq
        f
        ^{
            !
        }
        \omega
        _{
         Y
        }
        ^{
         \bullet
        }
    \end{align}
    is an invertible sheaf
    (\cite[\href{https://stacks.math.columbia.edu/tag/0AWV}{Tag~0AWV}]{stacks-project}). Then we call
    \(
       \omega
       _{
        X
       }
       \coloneqq
       \cH
       ^{
        0
       }
       \left(
        \omega
        _{
         X
        }
        ^{
         \bullet
        }
       \right)
    \)
    \emph{the} dualizing sheaf of
    \(
       X
    \).
    We let
    \(
       \baselocus
       _{
        f
       }
       \omega
       _{
        X
       }
    \)
    denote the relative base locus of
    \(
       \omega
       _{
        X
       }
    \).
\end{definition}

\begin{remark}\label{remark:relative canonical base locus}
    Though the dualizing sheaf
    \(
       \omega
       _{
        X
       }
    \)
    defined in~\cref{definition:dualizing complex} is unique only up to tensoring by invertible sheaves on
    \(
       Y
    \)
    (see~\cite[\href{https://stacks.math.columbia.edu/tag/0ATP}{Tag~0ATP}]{stacks-project}), the relative base locus
    \(
       \baselocus
       _{
        f
       }
       \omega
       _{
        X
       }
       \hookrightarrow
       X
    \)
    is well-defined by~\cref{lemma:invariance}.

    If
    \(
       X
    \)
    is smooth and proper over a field
    \(
       \bfk
    \), then we can choose
    
    \noindent
    \begin{align}
        \omega
        _{
         Y
        }
        ^{
         \bullet
        }
        \coloneqq
        \left(
         Y
         \to
         \Spec \bfk
        \right)
        ^{
         !
        }
        \bfk
        [
         - \dim X
        ]
    \end{align}
    so that
    \(
       \omega
       _{
        X
       }
       \simeq
       \Omega
       ^{
        \dim X
       }
       _{
        X
        /
        \bfk
       }
    \).
    If moreover
    \(
       \bfk
    \)
    is of characteristic
    \(
       0
    \),
    so that the resolution of singularity is guaranteed, then we can drop the properness assumption (the author owes the following argument to Alexander Kuznetsov). Indeed, we can compactify
    \(
       f
    \)
    into a morphism between proper schemes over
    \(
       \bfk
    \)
    such that the domain of the morphism is smooth over
    \(
       \bfk
    \). Then we can reduce the problem to the case where
    \(
       X
    \)
    is smooth and proper over
    \(
       \bfk
    \)
    by the base-change theorem for upper shriek~\cite[\href{https://stacks.math.columbia.edu/tag/0E9U}{Tag~0E9U}]{stacks-project}
    since open immersions are flat.

    Finally, note that the relative dualizing complex
    \(
       \omega
       ^{
        \bullet
       }
       _{
        X
        /
        \bfk
       }
    \)
    is different from a dualizing complex
    \(
        \omega
        ^{
         \bullet
        }
        _{
         X
        }       
    \)
    unless
    \(
       X
    \)
    is proper over
    \(
       \bfk
    \).
\end{remark}

\begin{theorem}\label{theorem:dichotomy}
    Let
    \(
        f \colon X \to Y
    \)
    be a proper morphism of finite dimensional noetherian algebraic spaces such that \( X \) is a regular connected scheme. Let
    \(
       \omega
       _{
        X
       }
    \)
    be the dualizing sheaf of
    \(
       X
    \)
    as in~\cref{definition:dualizing complex} (see also~\cref{remark:relative canonical base locus}) and
    
    \noindent
    \begin{align}\label{equation:an SOD}
        \derived ( X ) = \langle \cA, \cB \rangle
    \end{align}
    be an \( f \)-linear semiorthogonal decomposition. Then one of the followings holds.
    
    \noindent
    \begin{enumerate}[(A)]
        \item\label{item:points belong to A}
        For any closed point \( x \nin \baselocus _{ f } ( \omega _{ X } ) \) the sheaf \( \bfk ( x )\) is contained in \( \cA \).

        \item\label{item:points belong to B}
        For any closed point \( x \nin \baselocus _{ f } ( \omega _{ X } ) \) the sheaf \( \bfk ( x )\) is contained in \( \cB \).
    \end{enumerate}
    Moreover, when~\cref{item:points belong to A} (resp.~\cref{item:points belong to B}) holds, then the support of any object of \( \cB \) (resp. \( \cA \)) is contained in the closed subset
    \(
        \baselocus _{ f } ( \omega _{ X } )
    \).
\end{theorem}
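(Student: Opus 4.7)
The plan is to adapt the proof of~\cite[Theorem~3.1]{2015arXiv150800682K} to the relative setting, replacing absolute Serre duality throughout with Grothendieck duality as packaged in~\cref{corollary:Grothendieck duality}. Applying that corollary with \(F = b\) and \(G = a\) to the semi-orthogonality vanishing \(f_{\ast} \cRHom_{X}(b,a) = 0\) yields the Serre-dual vanishing \(f_{\ast} \cRHom_{X}(a, b \otimes \omega_{X}^{\bullet}) = 0\) for \(a \in \cA\), \(b \in \cB\). Taking \(\mathbb{R}\Gamma(Y, -)\) and using that \(X\) is regular (so \(\omega_{X}^{\bullet} \simeq \omega_{X}[\dim X]\)), we obtain \(\Ext_{X}^{i}(a, b \otimes \omega_{X}) = 0\) for every \(i \in \mathbb{Z}\).

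Fix a closed point \(x \notin \baselocus_{f}(\omega_{X})\) and consider the decomposition triangle \(b_{x} \to \bfk(x) \to a_{x} \xrightarrow{\alpha} b_{x}[1]\) with \(a_{x} \in \cA\), \(b_{x} \in \cB\); we aim to derive a contradiction from the assumption that both \(a_{x}\) and \(b_{x}\) are nonzero. Using~\cref{lemma:flat base change} and~\cref{remark:relative base locus when target affine}, shrink \(Y\) to an affine neighborhood \(V \ni f(x)\) and choose \(s \in H^{0}(f^{-1}(V), \omega_{X})\) with \(s(x) \neq 0\); set \(U = f^{-1}(V) \setminus V(s)\). Multiplication by \(s\) yields a natural transformation \(\sigma_{F} \colon F \to F \otimes \omega_{X}\); naturality gives \(\sigma_{b_{x}[1]} \circ \alpha \in \Hom(a_{x}, b_{x}[1] \otimes \omega_{X}) = 0\) by the Serre-dual vanishing, and since \(\sigma_{b_{x}[1]}|_{U}\) is an isomorphism, \(\alpha|_{U} = 0\). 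Indecomposability of \(\bfk(x)|_{U}\) then forces \(a_{x}|_{U} = 0\) or \(b_{x}|_{U} = 0\). In the first subcase, \(\operatorname{Supp}(a_{x}) \subseteq V(s)\) is disjoint from \(\{x\}\), so a stalk computation gives \(\Hom(\bfk(x), a_{x}) = 0\); the map \(\bfk(x) \to a_{x}\) in the triangle vanishes and the triangle splits as \(b_{x} \simeq \bfk(x) \oplus a_{x}[-1]\), forcing \(a_{x}[-1] \in \cA \cap \cB = 0\) (the last equality because semi-orthogonality makes any object in \(\cA \cap \cB\) self-orthogonal), hence \(a_{x} = 0\). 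In the second subcase, \(\operatorname{Supp}(b_{x}) \subseteq V(s)\) is disjoint from \(\{x\}\), so \(\RHom(b_{x}, \bfk(x)) = 0\); applying \(\RHom(b_{x}, -)\) to the rotated triangle \(\bfk(x) \to a_{x} \to b_{x}[1]\) and using semi-orthogonality \(\RHom(b_{x}, a_{x}) = 0\), the long exact sequence forces \(\Hom(b_{x}, b_{x}) = 0\), hence \(b_{x} = 0\). Either subcase contradicts the assumption, so \(\bfk(x) \in \cA \cup \cB\) for every \(x \notin \baselocus_{f}(\omega_{X})\).

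For the uniformity of (A) versus (B), consider the decomposition \(b_{0} \to \cO_{X} \to a_{0}\) of the structure sheaf. Semi-orthogonality shows that \(\bfk(x) \in \cA\) implies \(x \notin \operatorname{Supp}(b_{0})\); conversely, the Serre-dual vanishing, combined with the local triviality \(\bfk(x) \otimes \omega_{X} \simeq \bfk(x)\) valid for \(x \notin \baselocus_{f}(\omega_{X})\), gives that \(\bfk(x) \in \cB\) implies \(x \notin \operatorname{Supp}(a_{0})\). Therefore the two loci \(S_{\cA} = (X \setminus \operatorname{Supp}(b_{0})) \cap (X \setminus \baselocus_{f}(\omega_{X}))\) and \(S_{\cB} = (X \setminus \operatorname{Supp}(a_{0})) \cap (X \setminus \baselocus_{f}(\omega_{X}))\) are both open in \(X \setminus \baselocus_{f}(\omega_{X})\), partition it (by the per-point dichotomy above), and the regular connected scheme \(X\) is irreducible, so \(X \setminus \baselocus_{f}(\omega_{X})\) is irreducible; two disjoint nonempty opens would contradict irreducibility, so one is empty. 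The support claim is then immediate: if (A) holds, then \(\RHom(b, \bfk(y)) = 0\) for any \(b \in \cB\) and \(y \notin \baselocus_{f}(\omega_{X})\), which is equivalent to \(y \notin \operatorname{Supp}(b)\).

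The main obstacle will be the careful bookkeeping in the per-point step: verifying (i) the shift between \(\omega_{X}^{\bullet}\) and \(\omega_{X}\) is handled correctly on a possibly non-equidimensional \(X\), and (ii) the disjoint-support computations hold for bounded complexes \(a_{x}, b_{x}\) and not merely for sheaves --- the fact that \(\bfk(x)\) is indecomposable in \(\derived(X)\) is essential, and the reduction to the local picture over an affine \(V \subseteq Y\) must commute with the \(f\)-linear decomposition.
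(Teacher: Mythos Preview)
Your strategy is the same as the paper's---apply Grothendieck duality to convert $f$-linear semiorthogonality into the dual vanishing $f_*\cRHom_X(a,b\otimes\omega_X^\bullet)=0$, decompose $\bfk(x)$, use a local section $s$ of $\omega_X$ non-vanishing at $x$ to force the connecting morphism to zero, then split the triangle. The architecture is right, but there is a genuine gap in the execution.

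The main one: $Y$ is only an algebraic space, so you cannot in general ``shrink $Y$ to an affine neighbourhood $V\ni f(x)$''. The paper instead chooses an \'etale morphism $y\colon Y'\to Y$ from an affine scheme with $f(x)$ in its image and $|y^{-1}(f(x))|=1$, pulls the decomposition triangle back along $\tilde y\colon X'=X\times_Y Y'\to X$, and uses flat base change to get
\(
f'_*\cRHom_{X'}(\tilde y^*b,\tilde y^*a)\simeq y^*f_*\cRHom_X(b,a)=0
\).
The section $s$ lives in $H^0(X',\omega_{X'})$, and once $a'$ or $b'$ vanishes near $x'\in X'$ one descends via $\operatorname{Supp}(\tilde y^*a)=\tilde y^{-1}(\operatorname{Supp}(a))$. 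This generality is actually used later: in~\cref{theorem:birational Konno} the target is a normal algebraic space that need not be a scheme. A related slip occurs even when a Zariski affine $V$ exists: the vanishing you need for $\sigma_{b_x[1]}\circ\alpha$ is over $f^{-1}(V)$, not over $X$; it does not follow from the global $\operatorname{Ext}_X$-vanishing you wrote but from restricting the \emph{sheaf} vanishing $f_*\cRHom_X(a,b\otimes\omega_X^\bullet)=0$ to $V$ and then taking sections over an affine.

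Two smaller points. With the paper's normalisation in~\cref{definition:dualizing complex} one has $\omega_X^\bullet=\omega_X$ concentrated in degree $0$, so the shift by $[\dim X]$ is unnecessary (and problematic if $X$ is not equidimensional). In your uniformity step the sets $S_\cA,S_\cB$ are open and cover $X\setminus\baselocus_f(\omega_X)$ but are not \emph{a priori} disjoint; the argument is completed by noting that if both were nonempty they would intersect (irreducibility of the regular connected $X$), producing a point outside $\operatorname{Supp}(a_0)\cup\operatorname{Supp}(b_0)\supseteq\operatorname{Supp}(\cO_X)=X$, a contradiction. This is exactly how the paper phrases it.
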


\begin{proof}
    We follow the strategy of the proof of~\cite[Theorem~3.1]{2015arXiv150800682K}.
    Take a closed point
    \(
        x
        \nin
        \baselocus _{ f }
        (
            \omega _{ X }
        )
    \).
    We first prove that either
    \(
        \bfk ( x )
        \in
        \cA
    \)
    or
    \(
        \bfk ( x )
        \in
        \cB
    \)
    holds.

    Take an \'etale morphism
    
    \noindent
    \begin{align}\label{equation:etale chart}
        y \colon Y ' \to Y
    \end{align}
    from a noetherian affine scheme \( Y ' \) and consider the Cartesian diagram as in~\eqref{equation:cartesian diagram}. Choose
    \(
       y
    \)
    so that
    \begin{itemize}
        \item
        \(
           f ( x )
           \in
           \Image ( y )
        \)
        \item
        \(
           |
           y ^{ - 1 }
           \left(
            f ( x )
           \right)
           |
           =
           1
        \).
    \end{itemize}
    Let
    \(
        x '
        \in
        X '
    \)
    be the unique point of
    \(
       X '
    \)
    such that
    \(
       \ytilde
       (
        x '
       )
       =
       x
    \).

    The flatness of \( y \), the isomorphism
    \(
       \ytilde
       ^{
        \ast
       }
       \omega _{ X }
       \simeq
       \omega
       _{
        X '
       }
    \)
    (by the base-change theorem~\cite[\href{https://stacks.math.columbia.edu/tag/0E9U}{Tag~0E9U}]{stacks-project}
    as
    \(
       y
    \)
    is \'etale), and~\cref{lemma:flat base change} imply that
    \(
        x '
        \nin
        \baselocus _{ f ' }
        (
            \omega _{ X ' }
        )
        \subseteq
        X '
    \).
    Then by~\cref{remark:relative base locus when target affine}, there is a section
    \(
        s \in H ^{ 0 }
        (
            X ',
            \omega
            _{
                X '
            }
        )
    \)
    such that
    \(
        s ( x )
        \neq
        0
    \).


    Consider the decomposition of
    \(
        \bfk ( x )
    \)
    with respect to the
    \(
       f
    \)-linear semiorthogonal decomposition~\eqref{equation:an SOD}:

    \noindent
    \begin{align}\label{equation:decomposition of k(x)}
        b
        \xrightarrow[]{ \xi }
        \bfk ( x )
        \xrightarrow[]{ \eta }
        a
        \xrightarrow[  ]{ \zeta }
        b [ 1 ],
    \end{align}
    where
    \(
        a \in \cA
    \)
    and
    \(
        b \in \cB
    \).
    Apply the functor
    \(
       \ytilde
       ^{
        \ast
       }
    \)
    to~\eqref{equation:decomposition of k(x)} to obtain the following distinguished triangle, where
    \(
       b '
       \coloneqq
       \ytilde
       ^{
           \ast
       }
       b,
       \xi '
       \coloneqq
       \ytilde
       ^{
           \ast
       }
       \xi
    \)
    and so on.
    
    \noindent
    \begin{align}
        b '
        \xrightarrow[]{
            \xi '
            }
        \ytilde
        ^{
            \ast
        }
        \bfk ( x )
        =
        \bfk
        (
            x '
        )
        \xrightarrow[]{
            \eta '
            }
        a '
        \xrightarrow[  ]{
            \zeta '
            }
        b ' [ 1 ],        
    \end{align}
    Then it follows that

    \noindent
    \begin{align}
        \left[
            a '
            \xrightarrow[]{
                \zeta '
                }
            b ' [ 1 ]
            \xrightarrow[]{
                \otimes s
                }
            b '
            \otimes
            \omega _{ X ' } [ 1 ]
        \right]
        &
        \in
        \RHom _{ X ' }
        \left(
            a ',
            b ' \otimes \omega _{ X ' } [ 1 ]
        \right)
        \\
        &
        \stackrel{
            \textrm{
                \(
                   Y '
                \)
                is affine
            }
        }{
            =
        }
        f '
        _{
            \ast
        }
        \cRHom
        _{
            X '
        }
        \left(
            a ',
            b ' \otimes \omega _{ X ' } [ 1 ]
        \right)
        \\
        &
        \stackrel{\text{\cref{corollary:Grothendieck duality}}}{ \simeq }
        {
            \left(
                f '
                _{
                    \ast
                }
                \cRHom _{
                    X '
                }
                \left(
                    b '
                    \otimes \omega _{ X ' } [ 1 ],
                    a '
                    \otimes \omega _{ X ' } ^{ \bullet }
                \right)
            \right)
        } ^{ \vee }
        \\
        &
        \stackrel{
            \textrm{
                \cref{definition:dualizing complex}
            }
        }{
            \simeq
        }
        {
            \left(
                f '
                _{
                    \ast
                }
                \cRHom _{
                    X '
                }
                \left(
                    b ',
                    a '
                \right)
            \right)
        } ^{ \vee }
        [
            1
        ],\label{equation:composition with s}
    \end{align}
    where

    \noindent
    \begin{align}
        {
            \left( ? \right)
        } ^{ \vee }
        \coloneqq
        \cRHom _{ Y ' } ( ?, \omega _{ Y ' } ^{ \bullet } )
        \colon
        {
            \derived ( Y ' )
        } ^{ \op } \simto \derived ( Y ' ).
    \end{align}
    Then~\eqref{equation:composition with s}
    \(
        =
        0
    \)
    as
    
    \noindent
    \begin{align}
        f '
        _{
            \ast
        }
        \cRHom _{
            X '
        }
        \left(
            b ',
            a '
        \right)
        &
        =
        f '
        _{
            \ast
        }
        \cRHom _{
            X '
        }
        \left(
            \ytilde
            ^{
                \ast
            }
            b,
            \ytilde
            ^{
                \ast
            }
            a
        \right)
        \\
        &
        \simeq
        f '
        _{
            \ast
        }
        \ytilde
        ^{
            \ast
        }
        \cRHom _{
            X
        }
        \left(
            b,
            a
        \right)
        \\
        &
        \stackrel{
            \textrm{\cite[Chapter~III~Proposition~9.3]{Hartshorne}}
        }{
            \simeq
        }
        y
        ^{
            \ast
        }
        f
        _{
            \ast
        }
        \cRHom _{
            X
        }
        \left(
            b,
            a
        \right)        
        \\
        &
        =
        y ^{ \ast }
        0
        \\
        &
        =
        0.
    \end{align}

    On the other hand, multiplication by the section \( s \) is an isomorphism on its non-vanishing locus
    \(
        U
        \coloneqq
        X ' _{ s }
        \subseteq X '
    \). Thus we conclude that

    \noindent
    \begin{align}
        \zeta \vert _{ U } = 0.
    \end{align}
    It follows from this and~\eqref{equation:decomposition of k(x)} that either
    \(
        ( a ' \vert _{ U }, b ' \vert _{ U } )
        \simeq
        ( 0, \bfk ( x ) )
    \)
    or
    \(
        ( \bfk ( x ), 0 )
    \).
    This implies that either
    \(
       a
    \)
    or
    \(
       b
    \)
    is
    \(
       0
    \)
    on an open neighborhood of
    \(
       x
    \)
    in
    \(
       X
    \).
    If the former is the case, then it implies that
    \(
       \eta
       =
       0
    \), so that
    
    \noindent
    \begin{align}
        b
        \simeq
        \bfk ( x )
        \oplus
        a [ - 1 ].
    \end{align}
    As
    \(
       \RHom
       _{
        X
       }
       \left(
        b,
        a [ - 1 ]
       \right)
       =
       0
    \)
    by the semiorthogonality, this implies that
    \(
       a = 0
    \)
    and
    \(
       b
       \simeq
       \bfk ( x )
    \).
    Similarly, if the latter is the case, then it follows that
    \(
       b = 0
    \)
    and
    \(
       a
       \simeq
       \bfk ( x )
    \).

    Next, the connectedness of
    \(
       X
    \)
    implies that it is impossible to have two closed points
    \(
       x
       _{
        1
       }
       \neq
       x
       _{
        2
       }
       \in
       X
    \)
    such that
    \(
       \bfk
       (
        x
        _{
            1
        }
       )
       \in
       \cA
    \)
    and
    \(
       \bfk
       (
        x
        _{
            2
        }
       )
       \in
       \cB
    \).
    Indeed, if this happens, then it follows that the support of objects of
    \(
       \cA
    \)
    and
    \(
       \cB
    \)
    are closed subsets of
    \(
       X
    \)
    which do not contain the generic point of
    \(
       X
    \).
    On the other hand, there is a triangle
    
    \noindent
    \begin{align}
        b
        \to
        \cO
        _{
            X
        }
        \to
        a
        \to
        b [ 1 ]
    \end{align}
    obtained from~\eqref{equation:an SOD}. Then
    
    \noindent
    \begin{align}
        X
        =
        \Supp ( \cO _{ X } )
        \subseteq
        \Supp ( a )
        \cup
        \Supp ( b )
        \neq
        X,
    \end{align}
    a contradiction. Thus we obtain the first assertion.

    Now assume that the case~\cref{item:points belong to A} holds, and take an arbitrary object
    \(
       b
       \in
       \cB
    \). Suppose for a contradiction that there exits a closed point
    \(
       x
       \in
       \Supp ( b )
       \setminus
       \baselocus
       _{
        f
       }
       (
        \omega
        _{
            X
        }
       )
    \). Let
    
    \noindent
    \begin{align}
        j
        \colon
        \left\{
           x
        \right\}
        \hookrightarrow
        X        
    \end{align}
    be the embedding of the point. The derived version of the Nakayama's lemma (see, say,~\cite[Lemma~2.3]{2002.03303v1}) implies that
    \(
       j
       ^{
        \ast
       }
       b
       \neq
       0
    \), which in turn implies that there exits a non-trivial morphism
    
    \noindent
    \begin{align}
        b
        \to
        j _{ \ast }
        j ^{ \ast }
        b.  
    \end{align}
    Hence we see that
    
    \noindent
    \begin{align}
        f
        _{
            \ast
        }
        \cRHom
        _{
            X
        }
        \left(
            b,
            j _{ \ast }
            j ^{ \ast }
            b
        \right)
        \neq
        0.
    \end{align}
    On the other hand, the assumption~\cref{item:points belong to A} implies
    \(
       \bfk ( x )
       \in
       \cA
    \)
    and hence
    \(
        j _{ \ast }
        j ^{ \ast }
        b
        \in
        \cA
    \). Thus we obtain a contradiction.

    Finally, if the case~\cref{item:points belong to B} holds, then we can similarly prove the assertion by contradiction by using~\cref{corollary:Grothendieck duality}.
\end{proof}

\begin{remark}\label{remark:comparison}
    \cref{theorem:dichotomy} is also shown in~\cite[Theorem~2.3]{2023arXiv230106818L} in a greater generality. Here we briefly compare this paper with~\cite[]{2023arXiv230106818L}.

    First of all, the definition of the relative canonical base locus given in~\cite[Definition~1.2]{2023arXiv230106818L} (\emph{base locus of \(f\)} in their terminology and denoted by \(\baselocus |f|\)) is different from the one we gave in~\cref{definition:relative base locus}. They coincide when
    \(
       Y
    \)
    is the spectrum of a field (see the end of~\cite[Section~1.2]{2023arXiv230106818L}). They also introduced a slightly modified version
    \(
       \baselocus '
       | f |
    \)
    with the following properties (see~\cite[Example~1.3~(2)]{2023arXiv230106818L} for details).
    \begin{itemize}
        \item 
        \(
            \baselocus '
            | f |
            \subseteq           
            \baselocus
            | f |
        \), and they coincide if
        \(
           f
        \)
        is of finite homological dimension.

        \item
        \(
            \baselocus '
            | f |
        \)
        is local on the base; i.e., for any open embedding
        \(
           U
           \hookrightarrow
           Y
        \)
        one has
        \(
            \baselocus '
            | f |
            \cap
            f
            ^{
                - 1
            }
            (
                U
            )
            =
            \baselocus '
            | f \vert _{ U } |
        \).

        \item
        The main results of~\cite[]{2023arXiv230106818L} are valid if one replaces
        \(
           \baselocus | f |
        \)
        with
        \(
           \baselocus '
           | f |
        \).
    \end{itemize}
    Since
    \(
       \baselocus
       _{
        f
       }
       \omega _{ X }
    \)
    is also local on the base, and the inclusion
    
    \noindent
    \begin{align}\label{equation:two definitions}
        \baselocus ' | f |
        \subseteq
        \baselocus
        _{
         f
        }
        \omega _{ X }
    \end{align}
    is obvious when
    \(
       Y
    \)
    is affine, it follows that the inclusion~\eqref{equation:two definitions} holds true in general. Hence~\cite[Theorem~2.3]{2023arXiv230106818L} implies~\cref{theorem:dichotomy}.
    It is not clear to the author whether~\eqref{equation:two definitions} is always an equality or not.

    Though our base locus is (at least a priori) bigger than theirs, it is simpler and more natural for birational geometry; these features are exploited in~\cref{section:Semiorthogonal indecomposability of irregular minimal surfaces}. On the other hand~\cite[Definition~1.2]{2023arXiv230106818L} works for singular
    \(
       X
    \)
    as well, for which the dualizing complex is not necessarily a shift of an invertible sheaf. They prove~\cite[Theorem~2.3]{2023arXiv230106818L} by introducing a well-tailored spanning class (see also~\cite[]{MR4395364} for the case of Cohen-Macaulay varieties).
    It is not clear how to generalize the arguments of this paper to singular
    \(
       X
    \).

    From~\cite[Theorem~2.3]{2023arXiv230106818L} they obtain a couple of indecomposability results, which, for (proper morphisms from) regular schemes, we can also obtain from~\cref{theorem:dichotomy}.
\end{remark}

%
%
\subsection{Applications to semiorthogonal indecomposability}

\cref{example:PBs as relative canonical base locus} below motivated this work.

\begin{example}\label{example:PBs as relative canonical base locus}
    The paracanonical base locus \( \parabaselocus ( X )\) of \( X \) is defined as follows (\cite[Corollary~3.5]{2021arXiv210709564L}).

    \noindent
    \begin{align}
        \cO _{ \parabaselocus ( X ) }
        \simeq
        \coker
        \left(
        \bigoplus _{ \substack{M ' \in \Pic ^{ 0 } X} } H ^{ 0 } ( X, \omega _{ X } \otimes M '  )
        \otimes _{ \bfk }
        {
            \left(
                \omega _{ X } \otimes M '
            \right)
        } ^{ - 1 }
        \to
        \cO _{ X }
        \right)
    \end{align}
    Put
    \(
       f
       =
       \albanese
       _{
        X
       }
       \colon
       X
       \to
       Y
       =
       \Albanese
       _{
        X
       }
    \),
    the albanese morphism of \( X \). Then the natural isomorphism
    \(
       \Pic
       ^{
        0
       }
       (
        Y
       )
       =
       \Pic
       ^{
        0
       }
       (
        \Albanese
        _{
            X
        }
       )
       \simeq
       \Pic ^{ 0 }
       (
        X
       )
    \)
    and~\cref{lemma:alternative description of the relative base locus} imply the obvious quotient map
    \(
        \cO _{ \parabaselocus ( X ) }
        \twoheadrightarrow
        \cO _{ \baselocus _{ f } ( \omega _{ X } ) }
    \); i.e., a closed immersion
    \(
        \baselocus _{ f } ( \omega _{ X } )
        \hookrightarrow
        \parabaselocus ( X )
    \).
    It is shown in~\cite[Theorem~1.5]{2021arXiv211006795C} that this actually is an isomorphism.

    In~\cite[Theorem~4.5]{2021arXiv210709564L} it is shown that the paracanonical base locus is empty for the
    \(
       k
    \)-th symmetric product of a smooth projective curve of genus
    \(
       g
       \ge
       2
    \)
    for
    \(
       k
       \le
       g - 1
    \).
    On the contrary, the canonical base locus of such a variety is not necessarily empty and hence we can not conclude the semiorthogonal indecomposability by using~\cite[Theorem~1.2]{2015arXiv150800682K}.

    Xun Lin proved in~\cite[Theorem~1.4]{2021arXiv210709564L} that one can replace the canonical base locus in~\cite[Theorem~1.2]{2015arXiv150800682K} with the paracanonical base locus, by bootstrapping the arguments of~\cite[]{2015arXiv150800682K}. From our point of view, the same result is obtained by applying~\cref{theorem:dichotomy} to the Albanese morphism and then by~\cref{theorem:Pirozhkov}. In any case, this implies the semiorthogonal indecomposability of aforementioned varieties by the aforementioned emptiness.

    It is worth noting that the technique of~\cite[]{2021arXiv210709564L} applies to singular
    \(
       X
    \)
    as well, whereas
    \(
       \Pic ^{ 0 } ( X )
    \)
    for such
    \(
       X
    \)
    may not be an abelian variety and hence the arguments of this paper do not apply. In this sense the former is more flexible than the latter. In fact, in the author's appendix to~\cite[]{2023arXiv230106818L}, he uses the technique to prove that semistable curves are semiorthogonally indecomposable.
\end{example}

Even if the relative canonical base locus is not empty, in good cases we can still conclude the relative semiorthogonal indecomposability.

\begin{corollary}\label{corollary:no relative SOD}
    In the situation of~\cref{theorem:dichotomy}, suppose that for each connected component of
    \(
        \baselocus _{ f }
        (
            \omega _{ X }
        )
    \)
    there exists a Zariski open neighborhood in
    \(
       X
    \)
    on which
    \(
        \omega _{ X }
    \)
    is isomorphic to the pullback of an invertible sheaf on
    \(
       Y
    \)
    (i.e.,
    \(
       f
    \)-trivial).
    Then \( X \) admits no \( f \)-linear semiorthogonal decomposition.
\end{corollary}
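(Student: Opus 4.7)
The plan is to argue by contradiction. Assume \(\derived(X) = \langle \cA, \cB \rangle\) is a nontrivial \(f\)-linear semiorthogonal decomposition. By~\cref{theorem:dichotomy}, after possibly swapping \(\cA\) and \(\cB\), I may assume we are in case~\cref{item:points belong to A}, so that \(\Supp b \subseteq \baselocus_{f}(\omega_X)\) for every \(b \in \cB\), while \(\bfk(x) \in \cA\) for every closed point \(x \notin \baselocus_{f}(\omega_X)\). The objective is then to force \(\cB = 0\), which renders the decomposition trivial.

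Supposing \(\cB \neq 0\), I would pick a nonzero \(b \in \cB\) together with a closed point \(x \in \Supp b\); then \(x\) must lie in some connected component \(Z\) of \(\baselocus_{f}(\omega_X)\). By hypothesis there exists an open neighborhood \(U \supseteq Z\) in \(X\) with \(\omega_X|_U \simeq f^{\ast} M|_U\) for some \(M \in \Pic Y\). Using~\cref{lemma:invariance} together with the freedom to modify \(\omega_Y^{\bullet}\) by invertible sheaves on \(Y\) (\cref{remark:relative canonical base locus}), one replaces \(\omega_Y^{\bullet}\) by \(\omega_Y^{\bullet} \otimes M^{-1}\) and thereby reduces to the case \(\omega_X|_U \simeq \cO_U\), without altering \(\baselocus_{f}(\omega_X)\). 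Stein-factorizing \(f\) and invoking~\cref{remark:transitivity} allows the further reduction to \(f_{\ast} \cO_X = \cO_Y\).

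The key step is an \emph{\(f\)-saturation} that promotes the local triviality of \(\omega_X\) on \(U\) to a globally meaningful statement. Because \(f\) is proper and \(X \setminus U\) is closed, the set \(V := Y \setminus f(X \setminus U)\) is open in \(Y\), and \(X_V := f^{-1}(V) \subseteq U\) is the largest \(f\)-saturated open contained in \(U\). On \(X_V\) the isomorphism becomes \(\omega_{X_V} \simeq (f|_{X_V})^{\ast} \cO_V = \cO_{X_V}\), and the projection formula yields \((f|_{X_V})_{\ast} \omega_{X_V} \simeq \cO_V\) with the counit map \((f|_{X_V})^{\ast}(f|_{X_V})_{\ast} \omega_{X_V} \to \omega_{X_V}\) an isomorphism. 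Hence \(\baselocus_{f|_{X_V}}(\omega_{X_V}) = \emptyset\), and by~\cref{lemma:flat base change} applied to the open immersion \(V \hookrightarrow Y\) this is the same as \(X_V \cap \baselocus_{f}(\omega_X) = \emptyset\). Provided \(Z \subseteq X_V\), this contradicts \(Z \neq \emptyset\) and completes the argument.

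The main obstacle will be establishing \(Z \subseteq X_V\), equivalently, that the full fibers of \(f\) over \(f(Z)\) are contained in \(U\). In the principal application to minimal irregular surfaces through the Albanese morphism, Konno's structure results on the relative canonical base locus force each component \(Z\) to be contracted by \(f\) to a point, so that \(Z\) is automatically a union of fibers and the inclusion is trivial after shrinking \(U\). In the general statement, one argues componentwise, refining \(U\) over a suitable open neighborhood of \(f(Z)\) in \(Y\) to guarantee the saturation; this is the technical crux, which can be handled iteratively component by component using the flat base change~\cref{lemma:flat base change} and the local nature of the hypothesis on \(X\).
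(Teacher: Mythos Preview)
Your approach has a genuine gap precisely where you flag the ``main obstacle'': the inclusion \(Z \subseteq X_V\) cannot be deduced from the hypothesis. The corollary only guarantees an open \(U \subseteq X\) containing the component \(Z\) on which \(\omega_X\) is \(f\)-trivial; there is no reason whatsoever for the full fibers \(f^{-1}(f(z))\) through points \(z \in Z\) to lie inside \(U\). Your \(f\)-saturated open \(X_V = f^{-1}\bigl(Y \setminus f(X \setminus U)\bigr)\) may therefore miss \(Z\) entirely, and the intended contradiction evaporates. The hand-wave about ``iterative refinement component by component'' does not touch this: the difficulty is not about handling several components but about the hypothesis being local on \(X\) while your construction demands control over whole fibers. (A secondary issue: after Stein-factorizing \(f = g \circ f'\), \cref{remark:transitivity} says \(f'\)-linearity implies \(f\)-linearity, not the direction you need.)

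The paper's argument bypasses all of this by never trying to show the base locus is empty. Instead it shows directly that the decomposition is \emph{orthogonal}. In your setup (case~\ref{item:points belong to A}, so every \(b \in \cB\) is supported in \(\baselocus_f(\omega_X)\)), split \(b\) along the connected components of its support; near each component the hypothesis gives \(\omega_X \simeq f^\ast M_i\), so \(b \otimes \omega_X^{-1}\) is a direct sum of pieces of the form \(b_i \otimes f^\ast M_i^{-1}\), each of which lies in \(\cB\) by \(f\)-linearity (\cref{lemma:characterization of f-linearity}). Now \cref{corollary:Grothendieck duality} yields, for any \(a \in \cA\),
\[
f_\ast \cRHom_X(a, b) \simeq \bigl(f_\ast \cRHom_X(b \otimes (\omega_X^{\bullet})^{-1}, a)\bigr)^{\vee} = 0,
\]
the last vanishing by the original semiorthogonality. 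Hence \(\langle \cA, \cB\rangle\) is orthogonal, and since \(X\) is connected it is trivial. The key idea you are missing is that local \(f\)-triviality of \(\omega_X\) near the base locus is exactly what makes the ``small'' subcategory stable under \(\otimes\,\omega_X^{\pm 1}\); Grothendieck duality then does the rest, with no saturation or emptiness argument needed.
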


\begin{proof}
    It is enough to show that any \( f \)-linear semiorthogonal decomposition
    \(
        \langle
            \cA,
            \cB
        \rangle
    \)
    is an orthogonal decomposition. Take
    \(
        a
        \in
        \cA
    \)
    and
    \(
        b
        \in
        \cB
    \).
    Suppose that~\cref{theorem:dichotomy}~\cref{item:points belong to A} holds, so that \( a \) is supported in
    \(
        \baselocus _{ f }
        (
            \omega _{ X }
        )
    \). Then it follows from the assumption of~\cref{corollary:no relative SOD} and the \( f \)-linearity that
    \begin{align}\label{equation:stable under Serre}
        a
        \otimes
        \omega _{ X }
        ^{ \bullet }
        \in
        \cA.
    \end{align}
    Hence

    \noindent
    \begin{align}
        f _{ \ast }
        \cRHom _{ X }
        (
            a,
            b
        )
        \stackrel{
            \text{~\cref{corollary:Grothendieck duality} }
            }{
                \simeq
            }
        {
            f _{ \ast }
            \cRHom _{ X }
            \left(
                b,
                a \otimes \omega _{ X } ^{ \bullet }
            \right)
        } ^{ \vee } = 0.
    \end{align}
    The last equality follows from the vanishing
    \(
        f _{ \ast }
        \cRHom _{ X }
        (
            b,
            a
            \otimes
            \omega _{ X }
            ^{ \bullet }
        )
        =
        0
    \), which follows from~\eqref{equation:stable under Serre}.
    Thus we have confirmed that the decomposition is actually orthogonal.
    If we assume~\cref{theorem:dichotomy}~\cref{item:points belong to B} instead, then similarly it follows that
    \(
        b
        \otimes
        \omega _{ X }
        ^{ - 1 }
        \in
        \cB
    \)
    and one can confirm
    \(
        f
        _{
            \ast
        }
        \cRHom
        _{
            X
        }
        (
            a,
            b
        )
        =
        0
    \)
    by similar arguments as above.
\end{proof}

\begin{corollary}\label{corollary:transitivity of SI}
    Let \( f \colon X \to Y \) be a morphism of smooth projective varieties such that \( Y \) is SSI\@. If moreover

    \noindent
    \begin{align}
        |
        \baselocus _{ f }
        (
            \omega _{ X }
        )
        |
        <
        \infty,
    \end{align}
    then \( X \) admits no semiorthogonal decomposition.
\end{corollary}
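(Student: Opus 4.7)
The plan is to chain together Corollary \ref{corollary:no relative SOD} (which makes \(f\) itself semiorthogonally indecomposable) with the transitivity statement in Remark \ref{remark:transitivity of SI}. First, since \(Y\) is SSI, Definition \ref{definition:SSI} says that every semiorthogonal decomposition of \(\derived(X)\) is automatically \(f\)-linear. Thus it suffices to prove that \(X\) admits no nontrivial \(f\)-linear semiorthogonal decomposition.

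Next I would verify the hypothesis of Corollary \ref{corollary:no relative SOD}. The assumption \(|\baselocus_{f}(\omega_{X})| < \infty\) means that the relative canonical base locus is a finite set of closed points, so each connected component is a single point \(x \in X\). Since \(X\) is regular, the stalk \(\omega_{X,x}\) is free of rank one over the local ring \(\cO_{X,x}\); trivializing a generator produces an affine open neighborhood \(U_{x}\) of \(x\) on which \(\omega_{X}|_{U_{x}} \simeq \cO_{U_{x}} = f^{\ast}\cO_{Y}|_{U_{x}}\), so \(\omega_{X}\) is \(f\)-trivial there. Since there are only finitely many such points and \(X\) is separated, the \(U_{x}\) can be chosen to be pairwise disjoint, hence we obtain an open neighborhood of each connected component of the base locus on which \(\omega_{X}\) is \(f\)-trivial.

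Corollary \ref{corollary:no relative SOD} then says that any \(f\)-linear semiorthogonal decomposition of \(\derived(X)\) is orthogonal; equivalently, \(f\) is SI in the sense of Definition \ref{definition:semiorthogonal indecomposability}. Combining \(f\) SI with \(Y\) SSI, Remark \ref{remark:transitivity of SI} gives that \(X\) is SI. Finally, a smooth projective variety is connected, and by the remark following Definition \ref{definition:semiorthogonal indecomposability} the SI property for connected \(X\) upgrades to the statement that the only semiorthogonal decompositions of \(\derived(X)\) are the trivial ones, which is the desired conclusion.

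There is no real obstacle in this argument: the only piece of substantive content is the local triviality of the invertible sheaf \(\omega_{X}\) around a point of a regular scheme, which is immediate from the fact that invertible modules over a local ring are free. The corollary is essentially a repackaging of Corollary \ref{corollary:no relative SOD} and Remark \ref{remark:transitivity of SI} in the favorable situation where the relative canonical base locus is zero-dimensional.
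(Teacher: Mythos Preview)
Your argument is correct and follows exactly the route the paper takes: the paper's own proof consists of the single sentence ``Immediately follows from \cref{corollary:no relative SOD} and \cref{remark:transitivity of SI}'', and you have simply spelled out the verification that a finite relative base locus satisfies the local \(f\)-triviality hypothesis of \cref{corollary:no relative SOD}. The only superfluous step is arranging the \(U_{x}\) to be pairwise disjoint, which \cref{corollary:no relative SOD} does not require.
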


\begin{proof}
    Immediately follows from~\cref{corollary:no relative SOD} and~\cref{remark:transitivity of SI}.
\end{proof}

\begin{lemma}\label{lemma:any SOD is f-linear}
    In the situation of~\cref{theorem:dichotomy}, suppose further that there exists a closed subset
    \(
       Z
       \hookrightarrow
       X
    \)
    such that    
    
    \noindent
    \begin{align}
        \dim
        f
        \left(
            Z
        \right)
        \le
        0
        \left(
            \iff
            \left|
                f
                \left(
                    Z
                \right)
            \right|
            <
            \infty
        \right).
    \end{align}
    Then any SOD
    
    \noindent
    \begin{align}
        \derived
        (
            X
        )
        =
        \langle
            \cA,
            \cB
        \rangle
    \end{align}
    such that
    
    \noindent
    \begin{align}
        \Supp
        \left(
            a
        \right)
        \subseteq
        Z
    \end{align}
    for any
    \(
       a
       \in
       \cA
    \)
    is
    \(
       f
    \)-linear.
\end{lemma}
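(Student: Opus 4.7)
The plan is to verify directly the defining $f$-linearity condition: for arbitrary $a \in \cA$ and $b \in \cB$, I will show that
\(
   \cG
   \coloneqq
   f_{\ast} \cRHom_X(b, a)
   \in
   \derived^{\bounded}(\coh Y)
\)
vanishes, rather than going through the tensoring characterization of~\cref{lemma:characterization of f-linearity}.

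First I observe that $\cRHom_X(b, a)$ is supported in $\Supp(a) \subseteq Z$, so by the properness of $f$ the support of $\cG$ is contained in the closed subset $f(Z)$, which by hypothesis is a finite set of closed points of $Y$; in particular each cohomology sheaf $\cH^{q}(\cG)$ has $0$-dimensional support. Second, the ambient semiorthogonality of $\langle \cA, \cB \rangle$ (applied in the absolute sense) gives
\begin{align}
   \bR \Gamma(Y, \cG) \simeq \RHom_X(b, a) = 0.
\end{align}
I then conclude by the general fact that on a noetherian algebraic space $Y$, the functor $\bR \Gamma(Y, -)$ is conservative on bounded complexes of coherent sheaves with $0$-dimensional support: such a coherent sheaf has vanishing higher cohomology and global sections equal to the direct sum of its stalks at the support points, so the hypercohomology spectral sequence for $\cG$ collapses to its $p = 0$ row, and the vanishing of $\bR \Gamma(Y, \cG)$ forces every stalk of every $\cH^{q}(\cG)$ to vanish; hence $\cG = 0$, as required.

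The main obstacle I expect is the last step in the algebraic-space setting. For noetherian schemes, Grothendieck vanishing for coherent sheaves with $0$-dimensional support is classical, but on a noetherian algebraic space it needs a brief reduction via an \'etale cover that separates the finitely many points of $f(Z)$ (or an appeal to the Stacks project). All other steps are routine manipulations of the definitions, Grothendieck duality, and semiorthogonality.
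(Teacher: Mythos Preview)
Your argument is correct and is essentially the same strategy as the paper's, just packaged slightly differently. Both proofs reduce the vanishing of \(f_{\ast}\cRHom_{X}(b,a)\) to the absolute vanishing \(\RHom_{X}(b,a)=0\) via the single nontrivial input that a \(0\)-dimensional noetherian algebraic space is an affine scheme. The paper makes this reduction by first writing \(a\simeq\iota_{\ast}\abar\) for some \(\abar\) on a thickening \(mZ\) (invoking~\cite[Lemma~7.40]{MR2434186}) and then factoring \(f\circ\iota\) through the \(0\)-dimensional image \(W\hookrightarrow Y\); since \(W\) is affine, pushing forward to \(W\) is the same as taking global sections. Your version bypasses the Huybrechts lemma entirely: you observe directly that \(\cG=f_{\ast}\cRHom_{X}(b,a)\) has cohomology sheaves supported on the finite set \(f(Z)\), and then run the hypercohomology spectral sequence to deduce \(\cG=0\) from \(\bR\Gamma(Y,\cG)=0\). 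The step you flag as the only obstacle, namely that a coherent sheaf on a noetherian algebraic space with \(0\)-dimensional support has vanishing higher cohomology and is detected by its global sections, is exactly the point where the paper invokes the same affine-scheme fact, so your anticipated reduction via an \'etale chart (or the Stacks project) is precisely what is needed and suffices. In short, your route is a mild streamlining of the paper's: you trade the explicit lift to \(mZ\) for a spectral-sequence argument on \(Y\), and both hinge on the same underlying fact.
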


\begin{proof}
    The assertion is trivial if
    \(
       Y
    \)
    is a scheme, but we find the assertion for algebraic spaces not completely trivial. As we could not find a proof in the literature, we include a proof below.

    Take objects
    \(
       a
       \in
       \cA
    \)
    and
    \(
       b
       \in
       \cB
    \).
    By~\cite[Lemma~7.40]{MR2434186}, for some
    \(
       m > 0
    \)
    there exists a bounded complex of coherent sheaves
    \(
       \abar
    \)
    on
    \(
       m Z
    \)
    such that
    \(
       a
       \simeq
       \iota
       _{
        \ast
       }
       \abar
    \),
    where
    \(
       \iota
       \colon
       m Z
       \hookrightarrow
       X
    \)
    is the immersion morphism. Then we have
    
    \noindent
    \begin{align}
        f
        _{
            \ast
        }
        \cHom
        _{
            X
        }
        \left(
            b,
            a
        \right)
        \simeq
        f
        _{
            \ast
        }
        \iota        
        _{
            \ast
        }
        \cHom
        _{
            m Z
        }
        \left(
            \iota
            ^{
                \ast
            }
            b,
            \abar
        \right).
    \end{align}

    On the other hand, by the assumption there exists a closed immersion from a
    \(
       0
    \)-dimensional algebraic space
    \(
       j
       \colon
       W
       \hookrightarrow
       Y
    \)
    such that
    \(
       f
       \circ
       \iota
    \)
    decomposes into a morphism
    \(
        g
        \colon
       m Z
       \to
       W
    \)
    and
    \(
       j
    \),
    so that
    
    \noindent
    \begin{align}
        f
        _{
            \ast
        }
        \iota        
        _{
            \ast
        }
        \cHom
        _{
            m Z
        }
        \left(
            \iota
            ^{
                \ast
            }
            b,
            \abar
        \right)
        \simeq        
        j
        _{
            \ast
        }
        g
        _{
            \ast
        }
        \cHom
        _{
            m Z
        }
        \left(
            \iota
            ^{
                \ast
            }
            b,
            \abar
        \right).
    \end{align}
    As is known well, since
    \(
       \dim W = 0
    \),
    \(
       W
    \)
    is indeed an affine scheme. Hence
    
    \noindent
    \begin{align}
        g
        _{
            \ast
        }
        \cHom
        _{
            m Z
        }
        \left(
            \iota
            ^{
                \ast
            }
            b,
            \abar
        \right)
        =
        \Hom
        _{
            m Z
        }
        \left(
            \iota
            ^{
                \ast
            }
            b,
            \abar
        \right)
        \simeq
        \Hom
        _{
            X
        }
        \left(
            b,
            \iota
            _{
                \ast
            }
            \abar
            =
            a
        \right)
        =
        0.
    \end{align}
    This concludes the proof.
\end{proof}


%
%
\section{Semiorthogonal indecomposability of irregular minimal surfaces}\label{section:Semiorthogonal indecomposability of irregular minimal surfaces}

Throughout~\cref{section:Semiorthogonal indecomposability of irregular minimal surfaces} we work over an algebraically closed field \( \bfk \) of an arbitrary characteristic.

\begin{theorem}\label{theorem:irregular surface is SI}
    Let
    \(
       X
    \)
    be a minimal surface with
    \(
       H
       ^{
        1
       }
       \left(
        X,
        \cO
        _{
            X
        }
       \right)
       \neq
       0
    \).
    Then
    \(
       \derived
       (
        X
       )
    \)
    admits no non-trivial semiorthogonal decompositions.
\end{theorem}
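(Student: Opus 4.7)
The strategy is to exploit the Albanese morphism $a \colon X \to A \coloneqq \Albanese _{ X }$, which is non-trivial precisely because $\irregularity ( X ) = h ^{ 1 } ( X, \cO _{ X } ) > 0$. Assume given an arbitrary semiorthogonal decomposition $\derived ( X ) = \langle \cA, \cB \rangle$; the task is to show it is trivial. Since $A$ is an abelian variety, \cref{theorem:Pirozhkov} applies and guarantees that the given decomposition is automatically $a$-linear. \cref{theorem:dichotomy} then furnishes the dichotomy: after possibly swapping $\cA$ and $\cB$, every object of $\cB$ has support contained in $Z \coloneqq \baselocus _{ a } ( \omega _{ X } )$.

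In view of \cref{corollary:no relative SOD}, it suffices to prove that $\omega _{ X }$ is $a$-trivial on a Zariski open neighborhood of each connected component of $Z$; under this hypothesis the $a$-linear decomposition is forced to be orthogonal, and hence (since $X$ is connected so that $\cO _{ X }$ has no non-trivial direct summands) trivial. The proof is therefore reduced to the purely geometric question of understanding the relative canonical base locus of the Albanese morphism of a minimal surface $X$ with $\irregularity ( X ) > 0$.

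This geometric input is the one supplied by Konno's structure theorems, and I would organize it along the dimension of the Albanese image. When $\dim a ( X ) = 1$, the Stein factorization of $a$ is a fibration $f \colon X \to C$ over a smooth curve $C$ of genus $\ge 1$, and Konno's analysis of the fixed part of the relative canonical system $| \omega _{ X / C } |$ on a relatively minimal fibered surface constrains the components of $Z$ to be contained in unions of fibres over which $\omega _{ X }$ differs from $f ^{ \ast } \omega _{ C }$ only by a pullback from $C$, yielding $a$-triviality in a neighborhood. When $\dim a ( X ) = 2$, i.e.\ $X$ is of maximal Albanese dimension, the minimality of $X$ together with Konno's description of fixed components of the canonical system on minimal irregular surfaces identifies each component of $Z$ as a curve contracted to a point by $a$, so that $a$-triviality of $\omega _{ X }$ near it is automatic via the factorization.

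The main obstacle is the last step: it is not enough to obtain the numerical vanishing $\omega _{ X } \cdot C = 0$ for each component $C$ of $Z$; one needs the sheaf-level statement that $\omega _{ X }$ restricted to an open neighborhood of each component of $Z$ is genuinely a pullback from $A$. The appropriate form of Konno's theorems, combined with the minimality of $X$ (preventing the appearance of $( - 1 )$-curves in the base locus) and the irregularity hypothesis (which forces the Albanese image to absorb all horizontal positivity), is what bridges the numerical input to the required $a$-triviality; the remaining bookkeeping, including the reduction of the zero-dimensional Albanese image case to \cref{lemma:any SOD is f-linear} and \cref{corollary:transitivity of SI}, is routine.
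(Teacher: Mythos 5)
Your high-level strategy — Pirozhkov's SSI theorem, the dichotomy of \cref{theorem:dichotomy}, and Konno's structure theorems for the relative canonical base locus — matches the paper, and the division by Albanese fiber dimension is correct. However the reduction you propose has a genuine gap, and there is a missing case.

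The central problem is the reduction to \cref{corollary:no relative SOD}: you claim it suffices to show that $\omega_X$ is $f$-trivial on a Zariski open neighborhood of each connected component of $\baselocus_f(\omega_X)$, where $f\colon X\to Y$ is the Stein factorization of the Albanese map. You are right that this is ``the main obstacle,'' but the resolution you sketch (``Konno's theorems plus minimality plus irregularity bridges the numerical input to the required $a$-triviality'') is not an argument. Konno's factorization $X\xrightarrow{g}X'\to Y$ only tells you that the curve components of $\baselocus_f(\omega_X)$ lie in $\exceptionalset(g)$ and are contracted to points; it does not give, and one should not expect, an isomorphism $\omega_X|_U\cong f^*M|_U$ on a Zariski neighborhood $U$ of such a curve. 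Being contracted to a point forces $U\subseteq f^{-1}(V)$ for some affine $V$, so essentially you would need $\omega_X$ to be \emph{trivial} on a Zariski neighborhood of the exceptional curves — a sheaf-level statement far stronger than $\omega_X\cdot C=0$, and one that fails for a typical minimal surface of general type whose canonical model has $(-2)$-curves. The paper avoids this entirely: it does not try to verify the hypothesis of \cref{corollary:no relative SOD}. Instead it uses \cref{lemma:any SOD is f-linear}: once \cref{theorem:dichotomy} shows one component of the SOD is supported in $\baselocus_f(\omega_X)$, and Konno's \cref{theorem:birational Konno} shows $g$ maps this locus to a finite set, \cref{lemma:any SOD is f-linear} upgrades the SOD to a \emph{$g$-linear} one. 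Then \cref{theorem:base point free over rational singularities} gives $\baselocus_g(\omega_X)=\emptyset$ (because $X'$ has only rational singularities), and a second application of \cref{theorem:dichotomy} to the $g$-linear decomposition kills the nontrivial component outright. This ``further linearization'' step is categorical and does not need any $f$-triviality of $\omega_X$; it is the actual mechanism of the proof and is not in your proposal.

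Secondly, in the case $\dim\albanese_X(X)=1$ you invoke Konno's analysis of $|\omega_{X/C}|$, but \cref{theorem:original fibered Konno} assumes the generic fibre has genus $\ge 2$. The elliptic fibration case (generic fibre of genus $1$) requires a separate argument: the paper uses the canonical bundle formula to locate $\baselocus_h(\omega_h)$ over finitely many points of $C$, and then falls back on the argument of~\cite[Theorem~4.2]{2015arXiv150800682K}. Your proposal does not address this case. (As a minor point, there is no ``zero-dimensional Albanese image case'' to treat: $H^1(X,\cO_X)\neq 0$ forces $\dim\albanese_X(X)\ge 1$.)
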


In the proof of~\cref{theorem:irregular surface is SI}, we use the following results on relative canonical base locus due to Kazuhiro Konno.

\begin{theorem}[{\(=\)\cite[Theorem~2.4]{MR2481686}}]\label{theorem:birational Konno}
    Let
    \(
       f
       \colon
       X
       \to
       Y
    \)
    be a birational projective morphism of normal algebraic spaces of dimension
    \(
       2
    \)
    such that
    \(
       X
    \)
    is a nonsingular scheme and
    \(
       \omega
       _{
        X
       }
    \)
    is
    \(
       f
    \)-nef. Then there exits a factorization
    
    \noindent
    \begin{equation}\label{equation:birational Konno}
        \begin{tikzcd}
            X
            \arrow[r, "g"]
            \arrow[rr, bend right, "f"']
            &
            X '
            \arrow[r]
            &
            Y
        \end{tikzcd}
    \end{equation}
    such that
    \(
       X '
    \)
    has only rational singularities and
    
    \noindent
    \begin{align}
        \left |
        \baselocus
        _{
            f
        }
        \omega
        _{
            X
        }
        \setminus
        \exceptionalset
        (
            g
        )
        \right |
        <
        \infty.
    \end{align}
\end{theorem}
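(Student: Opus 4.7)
The plan is to construct $g$ by contracting the $\omega_X$-trivial $f$-exceptional curves, which produces $X'$ with Du Val singularities, and then to reduce the finiteness statement to showing that the relative canonical base locus on $X'$ is zero-dimensional. First I would invoke adjunction together with the negative definiteness of the intersection form on $\exceptionalset(f)$: any irreducible $f$-exceptional curve $C$ satisfies $C^2 < 0$ and $\omega_X \cdot C \ge 0$, so $2 p_a(C) - 2 = C^2 + \omega_X \cdot C \ge -2$ forces $C \cong \mathbb{P}^1$ and $C^2 = -2$ whenever $\omega_X \cdot C = 0$. These $(-2)$-curves form a sub-configuration of $\exceptionalset(f)$ with negative definite intersection matrix, so Artin's contractibility criterion (in the category of algebraic spaces) yields a proper birational morphism $g \colon X \to X'$ contracting precisely them; the resulting singularities are Du Val, hence rational and Gorenstein, and in particular $\omega_X \simeq g^{\ast}\omega_{X'}$ and $g_{\ast}\omega_X \simeq \omega_{X'}$. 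Since every $g$-contracted curve is also $f$-contracted, $f$ factors uniquely as $f = f' \circ g$ with $f' \colon X' \to Y$ proper birational.

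By construction every $f'$-exceptional curve is the $g$-image of an $f$-exceptional curve $C$ with $\omega_X \cdot C > 0$, so $\omega_{X'}$ has strictly positive degree on each such curve and is $f'$-ample. Combining $f_{\ast}\omega_X \simeq f'_{\ast} g_{\ast}\omega_X \simeq f'_{\ast}\omega_{X'}$ with $\omega_X \simeq g^{\ast}\omega_{X'}$ and the projection formula, the evaluation morphism $f^{\ast} f_{\ast} \omega_X \to \omega_X$ is identified with the $g$-pullback of $f'^{\ast} f'_{\ast} \omega_{X'} \to \omega_{X'}$, so set-theoretically $\baselocus_{f}(\omega_X) = g^{-1}\!\left(\baselocus_{f'}(\omega_{X'})\right)$. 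Since the $g$-preimage of any singular point of $X'$ is contained in $\exceptionalset(g)$, the desired estimate $|\baselocus_{f}(\omega_X) \setminus \exceptionalset(g)| < \infty$ reduces to showing that $\baselocus_{f'}(\omega_{X'})$ is at most zero-dimensional.

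The main obstacle is this last reduction. Outside $\exceptionalset(f')$ the morphism $f'$ is an isomorphism so the relative base locus is empty, hence it suffices to rule out that an entire $f'$-exceptional curve lies in $\baselocus_{f'}(\omega_{X'})$. The Leray spectral sequence for $f = f' \circ g$, together with Grauert--Riemenschneider applied to $g$, gives $R^i f'_{\ast}\omega_{X'} = R^i f_{\ast}\omega_X = 0$ for $i > 0$, and combining this with the theorem on formal functions one shows that germs of $f'_{\ast}\omega_{X'}$ at $y \in Y$ restrict onto sections of $\omega_{X'}$ on the scheme-theoretic fiber $(f')^{-1}(y)$. Since $\omega_{X'}$ is $f'$-ample, its restriction to this proper one-dimensional scheme is ample, and the base locus of an ample invertible sheaf on a curve is automatically finite. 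Translating back through $g$ then yields the asserted finiteness. This zero-dimensional control crucially uses that the fibers of $f'$ are curves, so the surface hypothesis is indispensable at this step.
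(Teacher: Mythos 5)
The paper does not prove this statement at all---it is cited directly from Konno's work (MR2481686, Theorem~2.4), so there is no internal proof to compare against. Your proposal is a reconstruction attempt, and most of its scaffolding is reasonable, but the final step contains a genuine gap.

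The early reductions are sound: contracting the $\omega_X$-trivial $f$-exceptional $(-2)$-curves via Artin's criterion yields $g\colon X\to X'$ with Du Val (hence rational Gorenstein) singularities and $\omega_X\simeq g^{\ast}\omega_{X'}$; right-exactness of $g^{\ast}$ combined with $g_{\ast}\omega_X\simeq\omega_{X'}$ gives the set-theoretic identification $\baselocus_f(\omega_X)=g^{-1}\!\left(\baselocus_{f'}(\omega_{X'})\right)$; and $R^1 f'_{\ast}\omega_{X'}=0$ follows from $R^1 f_{\ast}\omega_X=0$ and $R^1 g_{\ast}\omega_X=0$ via Leray, so cohomology and base change correctly reduces the problem to global generation of $\omega_{X'}|_F$ on each scheme-theoretic fiber $F=(f')^{-1}(y)$ away from finitely many points.

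The gap is the assertion that ``the base locus of an ample invertible sheaf on a curve is automatically finite.'' This is false. An ample invertible sheaf of small degree on a curve of positive genus may have no nonzero sections at all---for instance, a general invertible sheaf of degree one on a smooth genus-two curve has $h^0=0$, so its base locus is the entire curve. The fibers $F$ occurring here are exactly of the dangerous type: $Y$ is an arbitrary normal surface, so the exceptional set over a singular point can be a configuration containing curves of positive genus (e.g.\ the elliptic curve in a simple elliptic singularity, or worse), and $F$ can moreover be non-reduced and reducible, in which case even the existence of a nonzero global section does not preclude a section vanishing identically along some component. Ampleness of $\omega_{X'}|_F$ alone therefore does not rule out a one-dimensional base locus. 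This is precisely where Konno's actual argument does real work: it is not a formal consequence of relative vanishing and ampleness, but requires a careful analysis (essentially a classification) of which exceptional configurations of normal surface singularities can appear as fixed curves of the relative canonical system, the outcome being that, after the rational double point contraction, only rational and weakly elliptic configurations survive, and those contribute at most finitely many base points outside the contracted locus. Your proposal sketches the correct reduction framework but replaces the hard classification step with an incorrect general principle, so it does not constitute a proof.
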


\begin{theorem}\label{theorem:base point free over rational singularities}
    In the setting of~\cref{theorem:birational Konno}, assume further that
    \(
       Y
    \)
    has only rational singularities. Then
    \(
       \baselocus
       _{
        f
       }
       \omega
       _{
        X
       }
       =
       \emptyset
    \).
\end{theorem}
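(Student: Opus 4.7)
The plan is to invoke \cref{theorem:birational Konno} to reduce the problem, via the transitivity of relative base loci, to a local basepoint-freeness statement for minimal resolutions of rational surface singularities. Applying \cref{theorem:birational Konno} to $f$ yields a factorization $X \xrightarrow{g} X' \xrightarrow{h} Y$ with $X'$ having only rational singularities. By \cref{lemma:transitivity}, $\baselocus_{f} \omega_X \subseteq \baselocus_{g} \omega_X$, so it suffices to show that the right hand side is empty, i.e.\ that $\omega_X$ is relatively basepoint-free over $X'$.

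Since $\omega_X$ is $f$-nef, and hence $g$-nef, no $(-1)$-curve can be $g$-exceptional; therefore $g \colon X \to X'$ is a minimal resolution of the rational singularities of $X'$. The key assertion is then that for any such minimal resolution, the natural adjunction morphism $g^{\ast} \omega_{X'} \to \omega_X$ (well-defined because $g_{\ast} \omega_X \simeq \omega_{X'}$ for rational singularities) is surjective. In the Gorenstein (Du Val) case this is immediate from crepancy of the minimal resolution, $\omega_X \simeq g^{\ast} \omega_{X'}$. For non-Gorenstein rational singularities of $X'$, surjectivity is checked by a local formal computation: each singular point is \'etale-locally a cyclic quotient $\mathbb{A}^2 / G$, and one verifies directly on each chart of the Hirzebruch--Jung resolution that the $G$-invariant generating $2$-forms of $\omega_{X'}$ pull back to sections generating $\omega_X$ on the exceptional fibre.

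The hypothesis that $Y$ itself has rational singularities ensures that $f_{\ast} \omega_X \simeq \omega_Y$, so that the relative canonical base locus of \cref{definition:relative base locus} is the natural object to analyze, and it is this framing that is used in \cref{section:Semiorthogonal indecomposability of irregular minimal surfaces}. The main obstacle is the non-Gorenstein case of the surjectivity claim above: the Gorenstein case follows at once from crepancy, but in general it requires the explicit Hirzebruch--Jung computation and the log terminal classification of rational surface singularities in dimension two.
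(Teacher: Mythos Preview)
The paper does not give a self-contained argument here; it simply refers the reader to the relevant passage of Konno's paper. Your attempt, by contrast, tries to supply one, and it has two genuine gaps.

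First, the transitivity reduction is applied in the wrong direction. For a factorization \(X \xrightarrow{g} X' \xrightarrow{h} Y\) with \(f = h \circ g\), \cref{lemma:transitivity} (read in the direction supported by its own proof and by the surrounding text of the paper) gives
\(
    \baselocus_{g}(\omega_X) \hookrightarrow \baselocus_{h \circ g}(\omega_X) = \baselocus_{f}(\omega_X),
\)
not the reverse: the base locus relative to the \emph{closer} target is the smaller one. So establishing \(\baselocus_g(\omega_X) = \emptyset\) does not force \(\baselocus_f(\omega_X) = \emptyset\). This step is in any case superfluous, since \(Y\) itself already has rational singularities and \(\omega_X\) is already \(f\)-nef; you can drop the invocation of \cref{theorem:birational Konno} and run your surjectivity argument directly for \(f\).

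Second, and this is the substantive gap, your handling of the non-Gorenstein case rests on the assertion that a rational surface singularity is \'etale-locally a cyclic quotient, to be resolved by a Hirzebruch--Jung chain. That is false. In dimension two the quotient singularities are exactly the \emph{log terminal} ones, and these form a proper subclass of rational singularities; many rational surface singularities (already among Artin's rational triple points, for instance) are not quotient singularities at all, cyclic or otherwise. The surjectivity of \(f^{\ast}\omega_Y \to \omega_X\) for the minimal resolution of an arbitrary rational surface singularity is indeed the crux, but it needs an argument that works for every negative-definite tree of rational curves satisfying Artin's criterion, not just for Hirzebruch--Jung chains. That general argument is precisely what Konno provides in the passage the paper cites; your proposal does not supply a substitute for it.
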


\begin{proof}
    See the paragraph before~\cite[Proposition~2.7]{MR2481686}.
\end{proof}

\begin{theorem}[{\(=\)\cite[Theorem~3.1]{MR2654305}}]\label{theorem:original fibered Konno}
    Let
    
    \noindent
    \begin{align}
        h
        \colon
        X
        \to
        C
    \end{align}
    be a projective morphism of schemes of connected fibers where
    \(
       X
    \)
    is a nonsingular surface,
    \(
       C
    \)
    is a nonsingular curve, and
    \(
       \omega
       _{
        X
       }
    \)
    is
    \(
       h
    \)-nef.
    Suppose further that the generic fiber of
    \(
       h
    \)
    is of genus
    \(
       \ge
       2
    \).
    Let
    \(
       F
       =
       X
       _{
        c
       }
       \hookrightarrow
       X
    \)
    be the schematic fiber of a closed point
    \(
       c
       \in
       C
    \).
    Then
    \(
       \baselocus
       \omega
       _{
        F
       }
    \)
    supports at most exceptional sets of rational singularities or weak elliptic singularities.    
\end{theorem}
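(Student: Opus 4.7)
The plan is to locate base points of $\omega_F$ inside minimal subconfigurations of $F$ on which the failure of global generation is intrinsic, and then extract the topology of these subconfigurations from $h$-nefness. First, adjunction yields $\omega_F \simeq \omega_X \vert_F$, and the short exact sequence
\[
    0 \to \omega_X \otimes h^* \cO_C(-c) \to \omega_X \to \omega_F \to 0
\]
shows, after shrinking $C$ around $c$ and using a suitable vanishing for $R^1 h_* \omega_X \otimes \cO_C(-c)$, that $H^0(\omega_X) \twoheadrightarrow H^0(F, \omega_F)$. Hence a base point $p$ of $\omega_F$ is a point where no section of $\omega_X$ in a neighborhood of $F$ restricts to a local generator of $\omega_F$ at $p$.

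Next, write $F = \sum_i n_i F_i$ with $F_i$ the irreducible components. For a candidate base point $p$, I would choose a minimal effective $D = \sum_i m_i F_i$ with $0 < m_i \le n_i$, containing $p$ in its support, such that $p$ is still a base point of the composite $\omega_X \vert_F \to \omega_X \vert_D$; minimality is taken to mean that for every proper $D' < D$ containing $p$ the restriction $H^0(\omega_X \vert_D) \to H^0(\omega_X \vert_{D'})$ is surjective. On the Gorenstein curve $D$, Grothendieck duality identifies $\omega_X \vert_D$ with $\omega_D$ and $H^0(\omega_D)$ with the dual of $H^1(\cO_D)$. Thus base-point-freeness at $p$ becomes a Clifford-type statement on $D$ controlled by the arithmetic genus $p_a(D)$, and a base point intrinsic to $D$ forces $p_a(D) \in \{0, 1\}$.

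The $h$-nefness of $\omega_X$ combined with Zariski's lemma gives the intersection-theoretic structure on the support of $D$: since $F^2 = 0$ and the kernel of the intersection form on the components of $F$ is spanned by the class of $F$ itself, any strict sub-configuration has negative-definite intersection matrix. In the case $p_a(D) = 0$ the support is a tree of smooth rational curves and can be contracted to a rational singularity; in the case $p_a(D) = 1$ one is in the class of weak elliptic singularities of Laufer and Reid. That $D < F$ is strict, so that negative-definiteness is available, is forced by the hypothesis that the generic fiber of $h$ has genus $\ge 2$, which rules out $p_a(F) \le 1$.

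The main obstacle will be the minimality step: one must exclude subtler failures in which sections extend from $D$ to a larger $D' \le F$ but still vanish at $p$. I would address this inductively by enlarging $D$ component-wise and tracking the cokernel of $H^0(\omega_X \vert_{D'}) \to H^0(\omega_X \vert_D)$ through the connecting map into $H^1(\omega_X \vert_{D' - D})$, together with the vanishing of $H^1(\cO_{D' - D})$ on configurations of $(-2)$-curves provided by Artin's criterion, until the process stabilizes on a genuinely minimal $D$ to which the structural analysis of the previous paragraph applies.
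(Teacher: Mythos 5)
This theorem is not proved in the paper; it is quoted verbatim from Konno's work (\cite[Theorem~3.1]{MR2654305}), so there is no proof of it in the present source to compare against. Your sketch is, however, in the right spirit: Konno's actual argument is built around the chain-connected component decomposition of the fiber $F$, which is precisely the formalization of the ``minimal subconfiguration $D \le F$ on which the base point is intrinsic'' that you are groping toward, and the numerical input ($h$-nefness of $\omega_X$, Zariski's lemma, and the genus bound $p_a(F) \ge 2$ forcing $D < F$ and hence negative definiteness) is the same.

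There are two genuine gaps you would need to close. First, the surjectivity claim $H^0(X,\omega_X) \twoheadrightarrow H^0(F,\omega_F)$ is neither automatic from your exact sequence (the connecting map into $H^1$ need not vanish) nor needed: the statement concerns the linear system $|\omega_F|$ on $F$ itself, and the correct mechanism is cohomology and base change for $h_*\omega_h$ as in the paper's~\cref{lemma:baselocus commute with taking fibers}, not extension of sections to all of $X$. Second, and more seriously, deducing that $D$ supports the exceptional set of a rational (resp.\ weak elliptic) singularity from the single inequality $p_a(D) = 0$ (resp.\ $p_a(D) = 1$) is not valid. Artin's criterion for rational singularities requires $p_a(Z) \le 0$ for \emph{every} effective cycle $Z$ supported on the configuration, and the Laufer--Wagreich characterization of weakly elliptic singularities similarly requires $p_a(Z) \le 1$ for all such $Z$ together with equality for the fundamental cycle. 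Your $D$ is one specific cycle; establishing the condition for all subcycles is exactly the content of the chain-connected component machinery in Konno's proof, and your proposed inductive enlargement of $D$ would need to be organized so as to produce the fundamental cycle and verify these universal inequalities rather than a single value of $p_a$.
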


\begin{lemma}\label{lemma:baselocus commute with taking fibers}
    In the situation of~\cref{theorem:original fibered Konno}, we have
    
    \noindent
    \begin{align}
        F
        \cap
        \baselocus
        _{
            h
        }
        \left(
            \omega
            _{
                h
            }
        \right)
        \simeq
        \baselocus
        \omega
        _{
            F
        }.
    \end{align}
\end{lemma}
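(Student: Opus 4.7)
The plan is to tensor the defining exact sequence
\begin{align}
    h^* h_* \omega_h \otimes \omega_h^{-1} \to \cO_X \to \cO_{\baselocus_h(\omega_h)} \to 0
\end{align}
with $\cO_F$ over $\cO_X$. By right exactness this produces a sequence whose rightmost term is $\cO_{F \cap \baselocus_h(\omega_h)}$, so the task reduces to identifying the induced map on the left with the usual evaluation map $H^0(F, \omega_F) \otimes_\bfk \omega_F^{-1} \to \cO_F$ defining $\baselocus(\omega_F)$.

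The identification has two naturally separate ingredients. First, since $c$ is a Cartier divisor on the smooth curve $C$, the scheme-theoretic fibre $F$ is a Cartier divisor on $X$ with $\cO_X(F) = h^* \cO_C(c)$, and the usual adjunction formula (together with the triviality of $h^* \cO_C(c)|_F$) yields $\omega_h|_F \simeq \omega_F$; this takes care of the $\omega_h^{-1}$ factor upon restriction. Second, and this is the substantive step, I need the base-change isomorphism
\begin{align}
    h_* \omega_h \otimes_{\cO_C} \bfk(c) \xrightarrow{\sim} H^0(F, \omega_h|_F) = H^0(F, \omega_F),
\end{align}
which will furnish $(h^* h_* \omega_h)|_F \simeq H^0(F, \omega_F) \otimes_\bfk \cO_F$. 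I would establish it via Grauert's cohomology-and-base-change theorem: $h$ is flat (being a dominant morphism from a smooth surface to the smooth curve $C$) and $\omega_h$ is an invertible sheaf, while relative Serre duality gives $R^1 h_* \omega_h \simeq \cO_C$, which is locally free; hence the base-change map in degree $1$ is an isomorphism and the one in degree $0$ is surjective. Combined with the constancy of $h^0(F, \omega_F) = p_a(F)$ in the flat family (by flatness and Riemann--Roch), this forces $h_* \omega_h$ to be locally free and its base-change map to be an isomorphism as well.

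Combining the two identifications, together with the naturality of adjunction and of the base-change maps, turns the restricted sequence into the defining sequence of $\baselocus(\omega_F)$, whence the claimed equality of closed subschemes of $F$. The main obstacle is the base-change step above --- none of the individual inputs is delicate, but they must be assembled so that the identified map is genuinely the evaluation map (rather than merely an abstract morphism between the correct source and target); the adjunction identification and the right-exactness bookkeeping, by contrast, are essentially formal.
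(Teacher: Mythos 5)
Your proof is correct and matches the paper's approach: the paper's one-line proof cites exactly the same two ingredients — cohomology and base change (Hartshorne III.12.11) and relative duality — to obtain the isomorphisms $(h_*\omega_h)\vert_c \simeq H^0(F,\omega_h\vert_F) \simeq H^0(F,\omega_F)$, which you spell out in more detail. The extra care you take with right-exactness bookkeeping and the Grauert criterion (via $R^1 h_*\omega_h \simeq \cO_C$) fills in precisely what the paper leaves implicit.
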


\begin{proof}
    Immediately follows from the following isomorphisms.
    
    \noindent
    \begin{align}
        \left(
            h
            _{
                \ast
            }
            \omega
            _{
                h
            }
        \right)
        \vert
        _{
            c
        }
        \simeq
        H ^{ 0 }
        \left(
            F,
            \omega
            _{
                h
            }
            \vert
            _{
                F
            }
        \right)
        \simeq
        H ^{ 0 }
        \left(
            F,
            \omega
            _{
                F
            }
        \right),
    \end{align}
    where the first isomorphism follows from the cohomology and base change~\cite[Chapter~III~Theorem~12.11]{Hartshorne} and the relative duality.
\end{proof}

\begin{corollary}\label{corollary:fibered Konno}
    In the situation of~\cref{theorem:original fibered Konno}, there exists a factorization
    
    \noindent
    \begin{equation}\label{equation:fibered Konno}
        \begin{tikzcd}
           X
           \arrow[r, "f"]
           \arrow[rr, bend right, "h"']
           &
           Y
           \arrow[r]
           &
           C
        \end{tikzcd}
    \end{equation}
    such that
    \(
       Y
    \)
    is a normal algebraic space,
    \(
       f
    \)
    is a birational projective morphism, and
    
    \noindent
    \begin{align}
        \left |
            \baselocus
            _{
                h
            }
            \omega
            _{
                X
            }
            \setminus
            \exceptionalset
            (
                f
            )
        \right |
        <
        \infty
    \end{align}
\end{corollary}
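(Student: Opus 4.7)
The plan is to construct $Y$ by contracting all one-dimensional components of $\baselocus _{ h } ( \omega _{ X } )$ simultaneously.

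First I would identify these components. Since $\omega _{ X } \simeq \omega _{ h } \otimes h ^{ \ast } \omega _{ C }$, \cref{lemma:invariance} gives $\baselocus _{ h } ( \omega _{ X } ) = \baselocus _{ h } ( \omega _{ h } )$. For any closed point $c \in C$ with fiber $F = X _{ c }$, \cref{lemma:baselocus commute with taking fibers} yields $F \cap \baselocus _{ h } ( \omega _{ h } ) \simeq \baselocus ( \omega _{ F } )$. For a smooth fiber of genus $\ge 2$ the canonical linear system is base-point-free, so applying \cref{lemma:flat base change} over the open locus $U \subseteq C$ of smooth fibers, no one-dimensional component of $\baselocus _{ h } ( \omega _{ X } )$ dominates $C$. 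Hence all such components are contained in finitely many fibers, and by \cref{theorem:original fibered Konno} each is supported on exceptional sets of rational or weak elliptic singularities.

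Next, let $E _{ 1 }, \ldots, E _{ n }$ denote the resulting exceptional clusters, each contained in a single fiber of $h$. By the Grauert--Mumford criterion (which applies to exceptional sets of any normal surface singularity, including the weak elliptic case), the intersection matrix of each $E _{ i }$ is negative definite. By Artin's contraction theorem, the $E _{ i }$ can be simultaneously contracted to points, producing a proper birational morphism $f \colon X \to Y$ to a normal algebraic space $Y$, where $Y$ has a single (rational or weak elliptic) singularity in place of each $E _{ i }$. Since every $E _{ i }$ lies in a fiber of $h$, the rigidity lemma yields a unique factorization of $h$ through $f$. Projectivity of $f$ follows from the negative definiteness of the exceptional set, which lets one construct an $f$-ample line bundle by correcting a sufficiently positive divisor along the $E _{ i }$.

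Finally, by construction $\exceptionalset ( f ) = \bigcup _{ i } E _{ i }$ contains every one-dimensional component of $\baselocus _{ h } ( \omega _{ X } )$, so $\baselocus _{ h } ( \omega _{ X } ) \setminus \exceptionalset ( f )$ is zero-dimensional, hence finite. The main obstacle is invoking Artin's contraction theorem in the precise form needed---in particular confirming contractibility of exceptional sets of weak elliptic singularities, and that the resulting morphism $f$ to an algebraic space is projective. Once these standard but non-trivial inputs are granted, both the factorization through $C$ and the finiteness estimate are immediate.
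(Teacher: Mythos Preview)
Your proof is correct and follows essentially the same route as the paper's: reduce to $\baselocus_h \omega_h$ via \cref{lemma:invariance}, identify it fiberwise via \cref{lemma:baselocus commute with taking fibers}, and then invoke \cref{theorem:original fibered Konno} to see that the one-dimensional components are exceptional sets of rational or weak elliptic singularities, hence contractible. The paper's version is terser---it simply asserts the existence of the birational contraction once the one-dimensional components are identified as exceptional sets---whereas you make the Grauert--Mumford/Artin contraction step and the projectivity of $f$ explicit.
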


\begin{proof}
    Note first that
    
    \noindent
    \begin{align}
        \baselocus
        _{
            h
        }
        \omega
        _{
            X
        }
        =
        \baselocus
        _{
            h
        }
        \omega
        _{
            h
        }
    \end{align}
    by~\cref{lemma:invariance}. Then~\cref{lemma:baselocus commute with taking fibers} immediately implies the following equality.
    
    \noindent
    \begin{align}
        \baselocus
        _{
            h
        }
        \omega
        _{
            h
        }
        =
        \bigcup        
        _{
            F
            \colon
            \mathrm{singular}
        }
        \baselocus
        \omega
        _{
            F
        }
    \end{align}
    Then by~\cref{theorem:original fibered Konno}, there is a birational contraction
    \(
       f
       \colon
       X
       \to
       Y
    \)
    to an algebraic fiber space satisfying the assertions of~\cref{corollary:fibered Konno}.
\end{proof}

\begin{proof}[Proof~of~\cref{theorem:irregular surface is SI}]
    By the assumption, we have
    \(
       \dim
       \albanese
       _{
        X
       }
       \left(
        X
       \right)
       \in
       \left \{
          1,
          2
       \right \}
    \).

    Suppose first that
    \(
        \dim
        \albanese
        _{
         X
        }
        \left(
         X
        \right)
       =
       2
    \).
    Take the Stein factorization of the Albanese morphism of
    \(
       X
    \)
    as follows.
    
    \noindent
    \begin{equation}\label{equation:Stein factorization of Albanese, irregularity = 2}
        \begin{tikzcd}
           X
           \arrow[
            rr,
            "\albanese _{ X }"            
           ]
           \arrow[
            dr,
            "f"'
           ]
           &
           &
           \Albanese
           _{
            X
           }
           \\
           &
           Y
           \arrow[
            ur
           ]
           &
        \end{tikzcd}
    \end{equation}
    
    The assumption implies
    \(
       \dim
       Y
       =
       2
    \),
    and
    \(
       Y
    \)
    is SSI by~\cref{theorem:Pirozhkov}. Hence it is enough to show that
    \(
       f
    \)
    is SI\@. Now we can apply~\cref{theorem:birational Konno} to the morphism
    \(
       f
    \)
    of~\eqref{equation:Stein factorization of Albanese, irregularity = 2} to obtain the factorization as in~\eqref{equation:birational Konno}.

    Take an arbitrary
    \(
       f
    \)-linear semiorthogonal decomposition of
    \(
       \derived
       (
        X
       )
    \).
    Then it follows from~\cref{theorem:birational Konno,lemma:any SOD is f-linear,theorem:dichotomy} that the semiorthogonal decomposition under consideration is indeed
    \(
       g
    \)-linear. Then by applying~\cref{theorem:base point free over rational singularities} to the morphism
    \(
       g
    \)
    and~\cref{theorem:dichotomy} again, we conclude that the semiorthogonal decomposition is trivial.

    Suppose next that
    \(
       \dim
       \albanese
       _{
        X
       }
       \left(
        X
       \right)
       =
       1
    \).
    Take the Stein factorization of the Albanese morphism of
    \(
       X
    \)
    as follows.
    
    \noindent
    \begin{equation}\label{equation:Stein factorization of Albanese, irregularity = 1}
        \begin{tikzcd}
           X
           \arrow[
            rr,
            "\albanese _{ X }"            
           ]
           \arrow[
            dr,
            "h"'
           ]
           &
           &
           \Albanese
           _{
            X
           }
           \\
           &
           C
           \arrow[
            ur
           ]
           &
        \end{tikzcd}
    \end{equation}
    
    As in the previous case, the assumption implies that
    \(
       \dim
       C
       =
       1
    \)
    and
    \(
       C
    \)
    is SSI by~\cref{theorem:Pirozhkov}, so it is enough to show that
    \(
       h
    \)
    is SI\@.
    Now we can apply~\cref{corollary:fibered Konno} to the morphism
    \(
       h
    \)
    of~\eqref{equation:Stein factorization of Albanese, irregularity = 1} to obtain the factorization as in~\eqref{equation:fibered Konno}.

    Take an arbitrary
    \(
       h
    \)-linear semiorthogonal decomposition of
    \(
       \derived
       (
        X
       )
    \).
    Assume first that the general fiber of
    \(
       h
    \)
    is of genus
    \(
       \ge
       2
    \).
    Then it follows from~\cref{corollary:fibered Konno,lemma:any SOD is f-linear,theorem:dichotomy} that the semiorthogonal decomposition under consideration is indeed
    \(
       f
    \)-linear. Then by applying the arguments for the case where
    \(
       \dim
       \albanese
       _{
        X
       }
       \left(
        X
       \right)
       =
       2
    \), which is given above, to the morphism
    \(
       f
    \),
    we can conclude that the semiorthogonal decomposition is indeed trivial.

    Finally, suppose that the general fiber of
    \(
       h
    \)
    is of genus
    \(
       1
    \).
    Then it follows from the canonical bundle formula for elliptic fibrations~(see, say,~\cite[Section~4~(4.1)]{2015arXiv150800682K}) that there exists an open dense subset
    \(
       V
       \hookrightarrow
       C
    \)
    such that
    
    \noindent
    \begin{align}
        f
        \left(
            \baselocus
            _{
                f
            }
            (
                \omega
                _{
                    f
                }
            )
        \right)
        \subseteq
        C
        \setminus
        V.
    \end{align}
    Hence one of the components of the semiorthogonal decomposition is supported in
    \(
       f
       ^{
        - 1
       }
       \left(
        C
        \setminus
        V
       \right)
    \)
    by~\cref{theorem:dichotomy}.
    Then the proof of~\cite[Theorem~4.2]{2015arXiv150800682K} literally applies to our situation. Thus we conclude the proof.
\end{proof}

To conclude, let us summarize the remaining cases of~\cref{conjecture:main conjecture} in dimension
\(
   2
\):

\begin{corollary}\label{corollary:remaining cases}
    \cref{conjecture:main conjecture} is true except possibly for those
    \(
       X
    \)
    which satisfy all of the following conditions.

    \noindent
    \begin{itemize}
        \item \( X \) is a minimal surface of general type.
        \item \( H ^{ 2 } ( X, \cO _{ X } ) \neq 0 \) (\( \iff H ^{ 0 } ( X, \omega _{ X } ) \neq 0 \)).
        \item \( H ^{ 1 } ( X, \cO _{ X } ) = 0 \).
        \item There exists a 1-dimensional component of
        \(
           \baselocus
           \left(
            \omega _{ X }
           \right)
        \)
        which can not be contracted to a point in the category of algebraic spaces.
    \end{itemize}
\end{corollary}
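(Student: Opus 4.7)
The ``if'' direction of the conjecture (acyclic implies not SI) is immediate from~\cref{example:exceptional object}, since $\cO_X$ is then an exceptional object yielding a nontrivial SOD $\langle\cO_X^{\perp},\cO_X\rangle$. So it suffices to prove the ``only if'' direction: assuming $\cO_X$ is not acyclic and at least one of the four conditions fails, show that $X$ is SI. I do this by cases on which condition fails.

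If condition~(3) fails ($H^1(X,\cO_X)\neq 0$), then $X$ is SI by~\cref{theorem:irregular surface is SI}. So henceforth assume $q(X)=0$ and $p_g(X)>0$. If in addition condition~(1) fails then $\kappa(X)\in\{0,1\}$. For $\kappa=0$, the Enriques--Kodaira classification forces $X$ to be a K3 surface, so $\omega_X\cong\cO_X$ and $\baselocus(\omega_X)=\emptyset$, giving SI from the absolute case of~\cref{theorem:dichotomy}. For $\kappa=1$, $X$ is a minimal properly elliptic surface, and SI follows from~\cite[Theorem~4.2]{2015arXiv150800682K}: the canonical bundle formula restricts $\baselocus(\omega_X)$ to finitely many fibers of the Iitaka fibration, and the elliptic-fibration argument invoked at the end of the proof of~\cref{theorem:irregular surface is SI} applies here as well.

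It remains to handle the case that~(1)--(3) hold but~(4) fails: $X$ is a minimal surface of general type with $q=0$, $p_g>0$, and every 1-dimensional component of $\baselocus(\omega_X)$ can be contracted to a point in the category of algebraic spaces. By Artin's contractibility theorem, I contract all 1-dimensional components of $\baselocus(\omega_X)$ simultaneously (possibly vacuously, if no such component exists) to obtain a birational projective morphism $f\colon X\to Y$ to a normal algebraic space. Minimality of $X$ makes $\omega_X$ nef, hence $f$-nef, so~\cref{theorem:birational Konno} supplies a factorization $X\xrightarrow{g}X'\to Y$ with $X'$ having only rational singularities. Then~\cref{theorem:base point free over rational singularities} applied to $g$ yields $\baselocus_g(\omega_X)=\emptyset$, and~\cref{lemma:transitivity} forces $\baselocus_f(\omega_X)=\emptyset$.

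To conclude SI in this last subcase, take any SOD $\derived(X)=\langle\cA,\cB\rangle$. The absolute~\cref{theorem:dichotomy} puts the support of one side inside $\baselocus(\omega_X)$, whose $f$-image is $0$-dimensional by construction. Then~\cref{lemma:any SOD is f-linear} upgrades the SOD to an $f$-linear one, and~\cref{theorem:dichotomy} applied relative to $f$ with $\baselocus_f(\omega_X)=\emptyset$ forces that side to vanish, so the SOD is trivial. The main obstacle is the ad hoc contraction step: one must verify normality of $Y$ (from the negative-definiteness supplied by Artin) and $f$-nefness of $\omega_X$ (from the minimality of $X$) in order to invoke Konno's results in this setup.
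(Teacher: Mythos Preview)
Your case analysis and overall strategy are correct and close to the paper's (which mostly defers to~\cite{2015arXiv150800682K} and~\cref{theorem:irregular surface is SI}), but there is a genuine gap in the final subcase. You assert that $\baselocus_g(\omega_X)=\emptyset$ forces $\baselocus_f(\omega_X)=\emptyset$ via~\cref{lemma:transitivity}. The inclusion runs the other way: since $f$ factors as $X\xrightarrow{g}X'\to Y$, transitivity yields $\baselocus_g(\omega_X)\subseteq\baselocus_f(\omega_X)$, because passing to a \emph{finer} base can only shrink the relative base locus (for instance $\baselocus_{\id}(L)=\emptyset$ while the absolute $\baselocus(L)$ need not be). So you cannot conclude $\baselocus_f(\omega_X)=\emptyset$, and your final application of~\cref{theorem:dichotomy} relative to $f$ does not force triviality.

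The repair is to iterate one more step, exactly as in the proof of~\cref{theorem:irregular surface is SI}. After your steps make the SOD $f$-linear, apply~\cref{theorem:dichotomy} relative to $f$ to confine one side to $\baselocus_f(\omega_X)$; then~\cref{theorem:birational Konno} gives $|\baselocus_f(\omega_X)\setminus\exceptionalset(g)|<\infty$, hence $g(\baselocus_f(\omega_X))$ is finite, so~\cref{lemma:any SOD is f-linear} upgrades the SOD to a $g$-linear one; now~\cref{theorem:dichotomy} relative to $g$ together with $\baselocus_g(\omega_X)=\emptyset$ finishes. For comparison, the paper does not spell this out in the proof of~\cref{corollary:remaining cases}: it simply cites~\cite{2015arXiv150800682K} for this subcase and remarks that~\cref{lemma:any SOD is f-linear} provides an alternative route.
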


\begin{proof}
    The case where
    \(
       X
    \)
    is of Kodaira dimension
    \(
       0
    \)
    is established in~\cite[]{2015arXiv150800682K}. The case where
    \(
        X
    \)
    is of Kodaira dimension
    \(
        1
    \)
    is also established in~\cite[]{2015arXiv150800682K}, except the case where
    \(
       \dim
       H ^{ 1 }
       (
        X,
        \cO _{ X }
       )
       =
       1
    \)
    and
    \(
        \dim
        H ^{ 2 }
        (
         X,
         \cO _{ X }
        )
        =
        0
    \).
    This case is partially settled in~\cite[Theorem~4.14]{2021arXiv210709564L}, and fully by~\cref{theorem:main}.

    Hence it remains to consider the case when
    \(
       X
    \)
    is of general type with
    \(
        \dim
        H ^{ 1 }
        (
         X,
         \cO _{ X }
        )
        =
        0
    \).
    We can also assume
    \( H ^{ 2 } ( X, \cO _{ X } ) \neq 0 \),
    since otherwise line bundles on
    \(
       X
    \)
    are exceptional objects and hence we can conclude that
    \(
       X
    \)
    is not semiorthogonally indecomposable.
    If any 1-dimensional components of
    \(
       \baselocus
       (
        \omega
        _{
            X
        }
       )
    \)
    is contractible to a point in the category of algebraic spaces, then we can show the indecomposability (this is done in~\cite[]{2015arXiv150800682K}, but we can also prove it by~\cref{lemma:any SOD is f-linear}). Hence we obtain the assertion.
\end{proof}

%
%
\section{Alternative proof via relative Hochschild homology}\label{section:Alternative proof via relative Hochschild homology}

Here we give an alternative proof of~\cref{theorem:dichotomy} using the relative Hochschild homology, under the extra assumption that
\(
   f
\)
is smooth. Indeed, the result is finer than~\cref{theorem:dichotomy}. This is the relative version of the absolute case (i.e., the case
\(
   Y
   =
   \Spec \bfk
\))
which is worked out in~\cite[Section~5]{2020arXiv200607643P}.

In what follows we freely use the notation in the following diagram.

\noindent
\begin{equation}
    \begin{tikzcd}
        X
        \arrow[
            rrd,
            bend left,
            "\id _{ X }"
        ]
        \arrow[
            rd,
            "\Delta
            _{
                f
            }"
        ]
        \arrow[
            rdd,
            bend right,
            "\id _{ X }"'
        ]
        &
        &
        \\
        &
       X
       \times
       _{
        Y
       }
       X
       \arrow[
        r,
        "\pr _{ 2 }"
       ]
       \arrow[
        d,
        "\pr _{ 1 }"'
       ]
       \arrow[
        rd,
        "f ^{ 2 }"
       ]
       &
       X
       \arrow[
        d,
        "f"
       ]
       \\
       &
       X
       \arrow[
        r,
        "f"'
       ]
       &
       Y
    \end{tikzcd}
\end{equation}

In this section we discuss an
\(
   f
\)-linear semiorthogonal decomposition

\noindent
\begin{align}
    \derived
    (
        X
    )
    =
    \langle
        \cA,
        \cB
    \rangle.
\end{align}
Then we obtain the base change

\noindent
\begin{align}\label{equation:base change SOD}
    \perf
    (
        X
        \times
        _{
            Y
        }
        X
    )
    =
    \langle
        \pr _{ 1 }
        ^{
            \ast
        }
        \cA,
        \pr _{ 1 }
        ^{
            \ast
        }
        \cB
    \rangle
\end{align}
which is automatically
\(
   \pr _{ 2 }
\)-linear (see~\cite[Section~5.1]{MR2801403}). Let us define the objects
\(
   K _{ \cA },
   K _{ \cB }
   \in
   \perf
   (
    X
    \times
    _{
        Y
    }
    X    
   )
\)
by the following distinguished triangle which is obtained from the semiorthogonal decomposition~\eqref{equation:base change SOD}. Note that
\(
    K _{ \cA },
    K _{ \cB }   
\)
are the kernels of the projection functors to the subcategories
\(
   \cA
\)
and
\(
   \cB
\),
respectively (see~\cite[Theorem~3.7]{kuznetsov2009hochschild} for details).

\noindent
\begin{align}
    K
    _{
        \cB
    }
    \to
    \Delta
    _{
     f
     \ast
    }
    \cO
    _{
     X
    }
    \to    
    K
    _{
        \cA
    }
    \to
    K
    _{
        \cB
    }
    [
        1
    ]
\end{align}

\begin{definition}
    In the situation of~\cref{theorem:dichotomy}, the \emph{relative Hochschild homology} of
    \(
       X
    \)
    over
    \(
       Y
    \),
    \(
       \cA
    \),
    and
    \(
       \cB
    \)
    is defined as follows, respectively.
    
    \noindent
    \begin{align}
        \HH
        _{
            \ast
        }
        \left(
            X / Y
        \right)
        &
        \coloneqq
        f ^{ 2 }
        _{
            \ast
        }
        \cRHom
        _{
            X
            \times
            _{
                Y
            }
            X
        }
        \left(
            \Delta
            _{
                f
                \ast
            }
            \cO
            _{
                X
            },
            \Delta
            _{
                f
                \ast
            }
            \omega            
            _{
                X
            }
            [
                \dim X
            ]
        \right)
        \\
        \HH
        _{
            \ast
        }
        \left(
            \cA
        \right)
        &
        \coloneqq
        f ^{ 2 }
        _{
            \ast
        }
        \cRHom
        _{
            X
            \times
            _{
                Y
            }
            X
        }
        \left(
            K
            _{
                \cA
            },
            K
            _{
                \cA
            }
            \circ
            \Delta
            _{
                f
                \ast
            }
            \omega
            _{
                X
            }
            [
                \dim X
            ]
        \right)
        \\
        \HH
        _{
            \ast
        }
        \left(
            \cB
        \right)
        &
        \coloneqq
        f ^{ 2 }
        _{
            \ast
        }
        \cRHom
        _{
            X
            \times
            _{
                Y
            }
            X
        }
        \left(
            K
            _{
                \cB
            },
            K
            _{
                \cB
            }
            \circ
            \Delta
            _{
                f
                \ast
            }
            \omega
            _{
                X
            }
            [
                \dim X
            ]
        \right)
    \end{align}
\end{definition}

Then one can naturally define a morphism of
\(
    \bZ
\)-graded coherent sheaves on
\(
    Y
\)

\noindent
\begin{align}\label{equation:decomposition of Hochschild homology}
    \gamma
    _{
        \cA
    }
    \oplus
    \gamma
    _{
        \cB
    }
    \colon
    \HH
    _{
        \ast
    }
    \left(
        X / Y
    \right)
    \to
    \HH
    _{
        \ast
    }
    \left(
        \cA
    \right)
    \oplus
    \HH
    _{
        \ast
    }
    \left(
        \cB
    \right)
\end{align}
as follows.

Choose
\(
    m
    \in
    \bZ
\).
To define the morphism

\noindent
\begin{align}
    \gamma
    _{
        \cA,
        m
    }
    \colon
    \HH
    _{
        m
    }
    \left(
        X / Y
    \right)
    \to
    \HH
    _{
        m
    }
    \left(
        \cA
    \right)
\end{align}
by using the Yoneda lemma, take an arbitrary object
\(
    \cE
    \in
    \perf ( Y )
\).
All we have to do is to construct a map
    
\noindent
\begin{align}
    \Hom
    _{
        \perf ( Y )
    }
    \left(
        \cE,
        \HH
        _{
            m
        }
        \left(
            X / Y
        \right)
    \right)
    \to
    \Hom
    _{
        \perf ( Y )
    }
    \left(
        \cE,
        \HH
        _{
            m
        }
        \left(
            \cA
        \right)
    \right)
\end{align}
which is functorial in
\(
    \cE
\).
By the standard adjunctions it is equivalent to giving a map as follows.

\noindent
\begin{align}
    \Hom
    _{
        \perf
        (
            X
            \times
            _{
                Y
            }
            X
        )
    }
    \left(
        f
        ^{
            2
            \ast
        }
        \cE
        \otimes
        \Delta
        _{
            f
            \ast
        }
        \cO
        _{
            X
        },
        \Delta
        _{
            f
            \ast
        }
        \omega            
        _{
            X
        }
        [
            \dim X
            +
            m
        ]            
    \right)
    \\
    \to
    \Hom
    _{
        \perf
        (
            X
            \times
            _{
                Y
            }
            X
        )
    }
    \left(
        f
        ^{
            2
            \ast
        }
        \cE
        \otimes
        K
        _{
            \cA
        },
        K
        _{
            \cA
        }
        \circ
        \Delta
        _{
            \ast
        }
        \omega
        _{
            X
        }
        [
            \dim X
            +
            m
        ]
    \right)
\end{align}
This is achieved by the same arguments of~\cite[Section~5.1]{2020arXiv200607643P}, so we omit the details. The morphism
\(
   \gamma
   _{
    \cB,
    m
   }
\)
is also obtained from the same arguments.
Finally, by adjusting the proof of~\cite[Proposition~5.5]{kuznetsov2009hochschild} to the relative setting, we obtain the following

\begin{proposition}\label{proposition:Hochschild homology decomposes}
    The morphism~\eqref{equation:decomposition of Hochschild homology} is an isomorphism.
\end{proposition}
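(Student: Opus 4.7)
The plan is to carry out the relative analogue of Kuznetsov's proof of~\cite[Proposition~5.5]{kuznetsov2009hochschild}, following the absolute template of~\cite[Section~5]{2020arXiv200607643P}, by exhibiting $\HH_\ast(X/Y)$ as the total object of a $2\times 2$ grid of exact triangles whose off-diagonal entries vanish. The key input is the distinguished triangle of projection kernels
\[
K_{\cB} \to \Delta_{f\ast}\cO_X \to K_{\cA} \to K_{\cB}[1]
\]
in $\perf(X\times_Y X)$, which encodes the $f$-linear semiorthogonal decomposition at the level of Fourier--Mukai kernels.

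First, I would insert this triangle into \emph{both} slots of the bifunctor $f^2_\ast\cRHom_{X\times_Y X}\bigl(-,\, -\circ \Delta_{f\ast}\omega_X[\dim X]\bigr)$ defining $\HH_\ast(X/Y)$. By bilinearity this produces a square of exact triangles in $\derived(Y)$ whose four vertices are the objects
\[
f^2_\ast\cRHom_{X\times_Y X}\bigl(K_{P},\, K_{Q}\circ \Delta_{f\ast}\omega_X[\dim X]\bigr) \quad \text{for} \quad (P,Q)\in\{\cA,\cB\}^{2},
\]
with total object $\HH_\ast(X/Y)$. The two diagonal vertices ($P=Q$) are by definition $\HH_\ast(\cA)$ and $\HH_\ast(\cB)$, and the comparison maps $\gamma_\cA,\gamma_\cB$ constructed above are precisely the natural projections of the total object onto these diagonal vertices.

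The heart of the proof is then the vanishing of the two off-diagonal vertices
\[
f^2_\ast\cRHom_{X\times_Y X}\bigl(K_\cA,\, K_\cB\circ \Delta_{f\ast}\omega_X[\dim X]\bigr) \quad\text{and}\quad f^2_\ast\cRHom_{X\times_Y X}\bigl(K_\cB,\, K_\cA\circ \Delta_{f\ast}\omega_X[\dim X]\bigr).
\]
By standard manipulations with the relative kernel-convolution formalism on $X\times_Y X$ and the Yoneda-style unwinding used in the construction of $\gamma_\cA,\gamma_\cB$, both of these objects reduce to $f$-pushforwards of $\cRHom_X$ between universal objects of $\cA$ and $\cB$, possibly twisted by $\omega_X^{\bullet}$. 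The $f$-linearity of the semiorthogonal decomposition, combined with~\cref{corollary:Grothendieck duality} to handle the twist, then forces both to vanish.

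Once these cross terms are shown to vanish, the square of triangles degenerates and $\gamma_\cA\oplus\gamma_\cB$ becomes the induced isomorphism of total objects. The main obstacle will be setting up the kernel-convolution formalism, the projection and insertion functors, and the associated adjunctions over the base $Y$ with sufficient precision to transfer the absolute arguments of~\cite[Section~5.1]{2020arXiv200607643P} verbatim; this is exactly where the smoothness of $f$ enters, since it makes $\Delta_f$ a regular closed immersion and ensures that the standard base-change, projection-formula, and relative Grothendieck duality isomorphisms are available on $X\times_Y X$.
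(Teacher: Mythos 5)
Your proposal takes essentially the same route as the paper, which simply cites Kuznetsov's \cite[Proposition~5.5, Theorem~7.3, Corollary~3.10]{kuznetsov2009hochschild} and says to adapt those proofs to the relative setting. The $2\times 2$ grid from inserting the kernel triangle $K_{\cB}\to\Delta_{f\ast}\cO_X\to K_{\cA}$ into both slots of the Hochschild bifunctor, with the diagonal entries giving $\HH_\ast(\cA)$, $\HH_\ast(\cB)$ and the off-diagonal entries vanishing, is exactly Kuznetsov's argument.

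One imprecision worth fixing: the off-diagonal terms $f^2_\ast\cRHom_{X\times_Y X}(K_\cA, K_\cB\circ\Delta_{f\ast}\omega_X[\dim X])$ and its mirror do not literally ``reduce to $f$-pushforwards of $\cRHom_X$ between universal objects of $\cA$ and $\cB$.'' They are $\cRHom$-pushforwards on $X\times_Y X$, and the vanishing should be proved there: one needs that $K_\cA$, $K_\cB$ lie in the respective pieces of the base-changed, $\pr_2$-linear SOD $\perf(X\times_Y X)=\langle\pr_1^\ast\cA,\pr_1^\ast\cB\rangle$, and then that convolution with the relative Serre kernel $\Delta_{f\ast}\omega_X[\dim X]$ (equivalently, tensoring with $\pr_1^\ast\omega_X[\dim X]$) rotates one piece into a position semiorthogonal to the other. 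This is precisely the content of Kuznetsov's Theorem~7.3 together with a relative Grothendieck duality argument on $f^2\colon X\times_Y X\to Y$; it does not follow from $f$-linearity on $X$ alone. Since your plan already references these inputs, the gap is one of phrasing rather than substance, but you should make clear that the vanishing argument lives on $X\times_Y X$ and relies on the Serre-kernel rotation of the base-changed decomposition.
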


\begin{proof}
    See~\cite[Proposition~5.5,~Theorem~7.3,~Corollary~3.10]{kuznetsov2009hochschild} and their proofs for the absolute case. We can adjust them to the relative setting.
\end{proof}

\begin{remark}
    More abstract proof is given (in the absolute setting) in~\cite[Theorem~4.7]{2021arXiv211210312L}. We opted to follow the concrete approach of~\cite[]{kuznetsov2009hochschild} in this paper.
\end{remark}

As in the absolute case, we have the following assertion as to the decomposition~\eqref{equation:decomposition of Hochschild homology}.

\begin{lemma}
    Either of the following assertions holds.
    \begin{enumerate}[(A)]
        \item\label{item:HH(cA)=0}
        \(
            \HH
            _{
                - \dim X
            }
            \left(
                \cA
            \right)
            =
            0       
        \)

        \item\label{item:HH(cB)=0}
        \(
            \HH
            _{
                - \dim X
            }
            \left(
                \cB
            \right)
            =
            0
        \)
    \end{enumerate}
    Moreover, when the case~\cref{item:HH(cA)=0} holds, then for any
    \(
       a
       \in
       \cA
    \)
    the canonical map
    
    \noindent
    \begin{align}\label{equation:action of omega on a}
        f
        ^{
            \ast
        }
        f
        _{
            \ast
        }
        \omega
        _{
            X
        }
        \otimes
        _{
            \cO
            _{
                X
            }
        }
        a
        \to
        \omega
        _{
            X
        }
        \otimes
        _{
            \cO
            _{
                X
            }
        }
        a
    \end{align}
    is
    \(
       0
    \).
    Hence for any \'etale morphism from an affine scheme as in~\eqref{equation:etale chart} and a section
    \(
       s
       \in
       H ^{ 0 }
       \left(
        X ',
        \omega
        _{
            X '
        }
       \right)
    \)
    (see~\eqref{equation:cartesian diagram} for the definition of
    \(
       X '
    \))
    the multiplication map
    
    \noindent
    \begin{align}
        a '
        \xrightarrow[]{
            \otimes
            s
        }
        a '
        \otimes
        \omega
        _{
            X '
        }
    \end{align}
    is
    \(
       0
    \). In particular, it holds that
    \(
       \Supp
       (
        a
       )
       \subset
       \baselocus
       _{
        f
       }
       (
        \omega
        _{
            X
        }
       )
    \).    
    If the case~\cref{item:HH(cB)=0} holds, then the same assertions hold for objects of
    \(
       \cB
    \).
\end{lemma}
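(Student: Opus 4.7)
The plan is to mirror the argument of \cite[Section~5]{2020arXiv200607643P}, which establishes the absolute analog (the case $Y = \Spec \bfk$), using \cref{proposition:Hochschild homology decomposes} and \cref{corollary:Grothendieck duality} in place of their absolute counterparts. The key input is a canonical identification
\[
\HH_{-\dim X}(X/Y) \simeq f_*\omega_X.
\]
Since $f$ is smooth, the relative diagonal $\Delta_f$ is a regular closed immersion of codimension $\dim X - \dim Y$, and Grothendieck duality along $\Delta_f$ together with the projection formula yields this identification. Under it, the canonical element of $\HH_{-\dim X}(X/Y)$ arising from $\id_{\Delta_{f\ast}\cO_X}$ (via the Serre-duality shift built into the definition of $\HH_\ast$) corresponds to a distinguished generating global section where $\omega_X$ is globally generated relative to $f$.

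For the dichotomy, \cref{proposition:Hochschild homology decomposes} in degree $-\dim X$ gives a splitting $f_*\omega_X \simeq \HH_{-\dim X}(\cA) \oplus \HH_{-\dim X}(\cB)$ of coherent $\cO_Y$-modules. I would argue by contradiction: if both summands were nonzero, then after passing to an étale affine neighborhood $Y' \to Y$ as in the proof of \cref{theorem:dichotomy}, one obtains global sections $s_\cA, s_\cB \in H^0(X', \omega_{X'})$ lying in the two respective summands of $f'_*\omega_{X'}$. Unwinding the Yoneda construction of $\gamma_\cA, \gamma_\cB$, the section $s_\cA$ acts nontrivially on some object of $\cA$ while $s_\cB$ acts nontrivially on some object of $\cB$; running the argument at the end of the proof of \cref{theorem:dichotomy} then produces proper closed subsets covering $X$ and contradicts connectedness.

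For the \emph{moreover} statement, suppose \cref{item:HH(cA)=0} holds and pick $a \in \cA$. The defining adjunction
\[
\Hom_Y(\cE, \HH_{-\dim X}(\cA)) \simeq \Hom_{X \times_Y X}(f^{2\ast}\cE \otimes K_\cA, K_\cA \circ \Delta_{f\ast}\omega_X)
\]
shows that the vanishing of $\gamma_{\cA,-\dim X}$ is equivalent, via the identification $\HH_{-\dim X}(X/Y) \simeq f_*\omega_X$ and the fact that $K_\cA$ is the Fourier--Mukai kernel of the projection onto $\cA$, to the vanishing of the composition $f^*f_*\omega_X \otimes a \to \omega_X \otimes a$ for every $a \in \cA$. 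The remaining assertions follow by étale localization to an affine $Y'$: every section $s \in H^0(X', \omega_{X'})$ corresponds to an element of $f'_*\omega_{X'}$ via the counit, so multiplication by $s$ on $a'$ factors through the vanishing composition above tensored with $a'$ and is therefore zero. Finally, if $x \notin \baselocus_f(\omega_X)$ were in $\Supp(a)$, picking an étale $Y'$ so that $\omega_{X'}$ is trivialized at $x'$ by some section $s$ (possible by \cref{lemma:flat base change} and \cref{remark:relative base locus when target affine}) would force $a'|_{x'} \xrightarrow{\otimes s} (a' \otimes \omega_{X'})|_{x'}$ to be both an isomorphism and zero, contradicting the derived Nakayama lemma.

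The main obstacle is the dichotomy step: making precise that both summands of $f_*\omega_X$ being nonzero forces incompatible supports. Pirozhkov handles this in the absolute setting by directly unwinding what sections of the canonical bundle $\gamma_\cA, \gamma_\cB$ ``detect''; the relative adaptation is conceptually the same, but requires careful tracking of base-change behavior along étale covers of $Y$, for which \cref{lemma:flat base change} is the key tool.
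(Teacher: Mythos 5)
Your proposal follows essentially the same route as the paper: you use the direct-sum decomposition of relative Hochschild homology from \cref{proposition:Hochschild homology decomposes}, the identification $\HH_{-\dim X}(X/Y) \simeq f_{\ast}\omega_{X}$ coming from Grothendieck duality along the (regular, since $f$ is smooth) diagonal $\Delta_{f}$, \'etale localization to an affine $Y'$ to extract an honest section $s$ of $\omega_{X'}$ killed by $\gamma_{\cA'}$, and the Yoneda/kernel-level unwinding of $\gamma_{\cA,-\dim X}$ into the morphism $f^{\ast}f_{\ast}\omega_{X}\otimes a \to \omega_{X}\otimes a$. One small imprecision in your sketch of the dichotomy: the contradiction is not that $Z_{\cA}$ and $Z_{\cB}$ cover $X$, but rather that they \emph{fail} to cover the irreducible scheme $X$ (a finite union of proper closed subsets of an irreducible space is proper), so a skyscraper sheaf at a closed point of $X\setminus(Z_{\cA}\cup Z_{\cB})$ would have to be simultaneously orthogonal to $\cA$ and to $\cB$, which is impossible since $\cO_{X}$ sits in a triangle between objects of the two components; aside from this and the unnecessary claim of \emph{equivalence} (rather than implication) between vanishing of $\gamma_{\cA,-\dim X}$ and vanishing of the action map, the proposal matches the paper's argument.
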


\begin{proof}
    Without loss of generality, we can assume that
    \(
       f
    \)
    is surjective and hence
    \(
       Y
    \)
    is also connected.

    Suppose that
    \(
       \HH
       _{
        - \dim X
       }
       \left(
        \cB
       \right)
       \neq
       0
    \).
    Let
    \(
       X '
    \)
    be the base change of
    \(
       X
    \)
    as in~\eqref{equation:cartesian diagram} by an \'etale morphism from an affine scheme as in~\eqref{equation:etale chart} such that

    \noindent
    \begin{align}
        \HH
        _{
        - \dim X
        }
        \left(
        \cB '
        \right)
        \simeq
        y
        ^{
        \ast
        }
        \HH
        _{
        - \dim X
        }
        \left(
        \cB
        \right)
        \neq
        0
    \end{align}
    
    From the isomorphism~\eqref{equation:decomposition of Hochschild homology} in degree
    \(
       - \dim X
    \)
    we obtain a section
    \(
        0
        \neq
       s
       \in
       H ^{ 0 }
       \left(
        X ',
        \omega
        _{
            X '
        }
       \right)
    \)
    such that
    \(
       \gamma
       _{
        \cA '
       }
       (
        s
       )
       =
       0
    \).
    This implies that the objects of
    \(
       \cA '
    \)
    are supported in the vanishing locus of
    \(
       s
    \),
    which means that there is a proper closed subset
    \(
        Z
        _{
            \cA
        }
        \subsetneq
       X
    \)
    in which objects of
    \(
       \cA
    \)
    are supported.
    Likewise, if
    \(
       \HH
       _{
        - \dim X
       }
       \left(
        \cA
       \right)
       \neq
       0
    \),
    then it follows that there is a proper closed subset
    \(
        Z
        _{
            \cB
        }
        \subsetneq
       X
    \)
    in which objects of
    \(
       \cB
    \)
    are supported. Then the skyscraper sheaves (if any) of the closed points of the locus
    
    \noindent
    \begin{align}\label{equation:contradictory locus}
        X
        \setminus
        \left(
         Z
         _{
             \cA
         }
         \cup
         Z
         _{
             \cB
         }
        \right)
    \end{align}
    should be orthogonal to both
    \(
       \cA
    \)
    and
    \(
       \cB
    \), which is a contradiction. As we assume that
    \(
       X
    \)
    is connected, it follows that either
    \(
       Z
       _{
        \cA
       }
    \)
    or
    \(
       Z
       _{
        \cB
       }
    \)
    is empty.
    
    To see the rest of the assertions, note
        
    \noindent
    \begin{align}
        &
        f
        _{
            \ast
        }
        \omega
        _{
            X
        }
        \stackrel{
            \gamma
            _{
             \cA,
             - \dim X
            }            
        }{
            \to
        }
        f
        ^{
            2
        }
        _{
            \ast
        }
        \cHom
        _{
            X
            \times
            _{
                Y
            }
            X
        }
        \left(
            K
            _{
                \cA
            },
            K
            _{
                \cA
            }
            \circ
            \Delta
            _{
                \ast
            }
            \omega
            _{
                X
            }
        \right)
        \\
        \iff
        &
        f
        ^{
            2
            \ast
        }
        f
        _{
            \ast
        }
        \omega
        _{
            X
        }
        \otimes
        K
        _{
            \cA
        }
        \to
        K
        _{
            \cA
        }
        \circ
        \Delta
        _{
            \ast
        }
        \omega
        _{
            X
        }\label{equation:map of kernels}
    \end{align}
    Think of~\eqref{equation:map of kernels} as a morphism of kernels.
    Take
    \(
       a
       \in
       \cA
    \)
    and evaluate at
    \(
        \omega
        _{
            X
        }
        ^{
            - 1
        }            
        \otimes
        a
    \),
    to obtain the first row of the following commutative diagram
    (see the corresponding diagram in~\cite[Proof of Lemma~5.4]{2020arXiv200607643P}), where
    \(
       \Phi
       _{
        \bullet
       }
    \)
    is our notation for the integral transformation by the kernel
    \(
       \bullet
    \).
    The left vertical arrow is the adjoint unit
    
    \noindent
    \begin{align}
        \id
        _{
            \derived
            (
                X
            )
        }
        \to
        \Phi
        _{
            K
            _{
                \cA
            }
        }
    \end{align}
    tensored with the identity of
    \(
        f
        ^{
            2
            \ast
        }
        f
        _{
            \ast
        }
        \omega
        _{
            X
        }       
    \).
    
    \noindent
    \begin{equation}
        \begin{tikzcd}
            f
            ^{
                2
                \ast
            }
            f
            _{
                \ast
            }
            \omega
            _{
                X
            }
            \otimes
            \omega
            _{
                X
            }
            ^{
                - 1
            }            
            \otimes
            a
            \arrow[
                rr,
                "\eqref{equation:action of omega on a}"
            ]
            \arrow[d]
            &
            &
            a
            \arrow[
                d,
                "\id _{ a }"
            ]
            \\
            f
            ^{
                2
                \ast
            }
            f
            _{
                \ast
            }
            \omega
            _{
                X
            }
            \otimes
            \Phi
            _{
                K
                _{
                    \cA
                }
            }
            \left(
                \omega
                _{
                    X
                }
                ^{
                    - 1
                }            
                \otimes
                a
            \right)
            \arrow[
                rr,
                "\eqref{equation:map of kernels}
                \left(
                    \omega
                    _{
                        X
                    }
                    ^{
                        - 1
                    }            
                    \otimes
                    a
                \right)"'
            ]
            &
            &
            a
        \end{tikzcd}
    \end{equation}   

    The bottom arrow is trivial as we assumed
    \(
       \gamma
       _{
        \cA
       }
       =
       0
    \).
    Thus we see
    \(
       {\eqref{equation:action of omega on a}}
       =
       0
    \).
    The rest is straightforward.
\end{proof}

\printbibliography%
\end{document}